\title[Representation spaces for central extensions]{Representation spaces for central extensions and almost commuting unitary matrices}
\author[A.~Adem]{Alejandro Adem}
\address{Department of Mathematics,
University of British Columbia, Vancouver BC V6T 1Z2, Canada}
\email{adem@math.ubc.ca}
\author[M.C. Cheng]{Man Chuen Cheng}
\address{Department of Mathematics, University of British Columbia, Vancouver, BC V6T 1Z2, Canada}
\curraddr{Department of Mathematics, The Chinese University of Hong Kong, Shatin, Hong Kong}
\email{mccheng@math.cuhk.edu.hk}
\thanks{The first author was supported by NSERC.
The second author would like to thank the Pacific Institute for the Mathematical
Sciences for hosting him
during the time when this work
was completed.}
\providecommand{\res}{\mathop{\rm res}\nolimits}%
\subjclass[2010]{20C99, 55R35 (primary), 55R91 (secondary)}
\newcommand{\C}{\mathbb{C}}
\newcommand{\bZ}{\mathbb{Z}}
\newcommand{\bN}{\mathbb{N}}
\newcommand{\bQ}{\mathbb{Q}}
\newcommand{\bC}{\mathbb{C}}
\newcommand{\bS}{\mathbb{S}}
\newcommand{\bR}{\mathbb{R}}
\newcommand{\bT}{\mathbb{T}}
\newcommand{\Tnm}{T(n,\bZ/m)}
\newcommand{\TnQZ}{T(n,\bQ/\bZ)}
\newcommand{\TnRZ}{T(n,\bR/\bZ)}
\newcommand{\Hom}{\text{Hom}}
\newcommand{\Rep}{\text{Rep}}
\newcommand{\bigslant}[2]{{\left.\raisebox{.2em}{$#1$}\middle/\raisebox{-.2em}{$#2$}\right.}}
\newtheorem{theorem}{Theorem}[section] % 1st argument is your name for it
\newtheorem{lemma}[theorem]{Lemma}     % 2nd argument is what is printed
\newtheorem{corollary}[theorem]{Corollary}
\newtheorem{proposition}[theorem]{Proposition}
\newtheorem*{theoremA}{Theorem A}
\newtheorem*{theoremB}{Theorem B}
\newtheorem{definition}[theorem]{Definition}
\newtheorem{example}[theorem]{Example}
\newtheorem{remark}[theorem]{Remark}
\DeclareMathOperator{\spn}{span}
\def\quotient#1#2{%
    \raise1ex\hbox{$#1$}\Big/\lower1ex\hbox{$#2$}%
}
\begin{document}

\begin{abstract}
Let $\Gamma$ denote a central extension of the form $1\to \bZ^r\to\Gamma\to\bZ^n\to 1$.
In this paper we enumerate and describe the structure of the connected
components of the spaces of
homomorphisms $\Hom (\Gamma, U(m))$ and the associated
moduli spaces $\Rep(\Gamma, U(m))$, where $U(m)$ is the group of
$m\times m$ unitary matrices.
%We first obtain a description of the space of
%almost commuting tuples in $U(m)$ and use this to describe the components
%of the aforementioned space of homomorphisms.
\end{abstract}

\maketitle

\section{Introduction}

The space of ordered commuting $n$--tuples in a Lie group $G$ can be analyzed using a variety
of methods from algebraic topology and representation theory (see \cite{AC}); in particular
these spaces
can be identified with $\Hom(\bZ^n, G)\subset G^n$. In this paper our
goal is to consider a more
complicated source group, namely the space of homomorphisms $\Hom(\Gamma , G)$ where
$\Gamma$ is no longer abelian, but rather a central extension of the form
$1\to \bZ^r \to \Gamma \to \bZ^n\to 1$. A key ingredient we will use is the notion of spaces of
almost commuting elements (see \cite{BFM} and \cite{ACGII}). We focus our attention on
the unitary groups, for which we obtain complete descriptions. These in turn are used to
shed light on the structure of $\Hom(\Gamma, U(m))$ and the associated spaces of
representations $\Rep (\Gamma, U(m))$. An important motivation for this is the fact that
they arise as moduli spaces of isomorphism classes of flat connections on principal $U(m)$--bundles over compact manifolds $M$ which can be described as $r$--torus bundles over the $n$--torus.

%\medskip

Our results are rather intricate, as they expose a very rich structure encoding the components
of these spaces of representations. For clarity of exposition we will focus here on the case
when $r=1$; the more cumbersome general
case is described in Section 6. Let $B_n(U(m))$ denote the space of almost commuting
$n$--tuples in $U(m)$ i.e. the space of ordered
$n$--tuples $(A_1,\dots ,A_n)$ such that the pairwise commutators
$[A_i, A_j]$ are all central in $U(m)$. The characteristic polynomial defines a map
$\chi : U(m)\to \bC[z]$; for a central extension $1\to \bZ\to \Gamma\to \bZ^n\to 1$ we have a
natural restriction $\Hom(\Gamma, U(m))\to U(m)$. These two maps can be composed to yield
a function $\Hom(\Gamma, U(m))\to \bC [z]$. Given a polynomial $p(z)$ we denote its inverse
image in $U(m)$ by $U(m)_{p(z)}$ and its inverse image in $\Hom(\Gamma, U(m))$ by
$\Hom(\Gamma, U(m))_{p(z)}$. The extension $\Gamma$ is defined by a k--invariant
$\omega \in H^2(\bZ^n, \bZ)$; for our purposes we write it in the following form
(see Proposition 4.1): there exists a basis $e_1,\dots ,e_n$ of $\bZ^n$ and an integer $t\le n/2$
such that $\omega = c_1e_1^*\wedge e_{t+1}^* +\dots+c_te_t^*\wedge e_{2t}^*$
where $c_1,\dots , c_t$ are positive integers such that $c_i$ divides $c_{i+1}$
for $i=1,\dots ,t-1$. We now define $\bC [z]^m_{\Gamma}\subset \bC [z]$ as the set
of degree $m$ complex polynomials $p(z)$ such that (1) all roots of $p(z)$ are roots
of unity; and (2) if a root $\lambda$ of $p(z)$ is a primitive $k$--th root of unity, then the
multiplicity of $\lambda$ in $p(z)$ is divisible by $\mu_k = \prod_{i=1}^t k/(k,c_i)$, where
$(k,c_i)$ is the greatest common divisor of $k$ and $c_i$. We now state our main theorems,
which summarize the results in \S 3 and \S 4.

\begin{theoremA}
Let $1\to \bZ \to \Gamma\to \bZ^n\to 1$ with non--trivial k--invariant $\omega$. Then
there is a decomposition into connected components
$\Hom(\Gamma , U(m)) = \coprod_{p(z)\in \bC[z]^m_{\Gamma}} \Hom(\Gamma , U(m))_{p(z)}$,
where the number of components is given by the coefficient of $x^m$ in the generating
function $\prod_{k\ge 1} {1\over{(1- x^{\mu_k})^{\Phi (k)}}}$, where $\Phi$ is Euler's phi function.
\end{theoremA}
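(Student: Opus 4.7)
The plan is to exploit the central generator of $\Gamma$ to reduce the problem to spaces of almost commuting tuples on eigenspaces of a single central unitary. Using the normal form from Proposition 4.1, present $\Gamma$ via lifts $e_1,\ldots,e_n$ and a central generator $c$ subject to $[e_i,e_{t+i}]=c^{c_i}$ for $1\le i\le t$, with all remaining pairs commuting. A homomorphism $\rho\in\Hom(\Gamma,U(m))$ is then determined by unitaries $A_i=\rho(e_i)$ and $Z=\rho(c)$ satisfying these relations, with $Z$ commuting with every $A_i$.

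First I would identify the image of the characteristic polynomial map. Being unitary, $Z$ is diagonalizable, and each eigenspace $V_\lambda\subset\bC^m$ is preserved by every $A_i$. Restricting the commutator relation yields $[A_i|_{V_\lambda},A_{t+i}|_{V_\lambda}]=\lambda^{c_i}\,\mathrm{id}_{V_\lambda}$; taking determinants forces $\lambda^{c_i m_\lambda}=1$, so every eigenvalue of $Z$ is a root of unity. If $\lambda$ is a primitive $k$-th root of unity, the restrictions $A_1|_{V_\lambda},\ldots,A_{2t}|_{V_\lambda}$ form a projective representation of $\bZ^{2t}$ whose commutator cocycle has entries of order $k/(k,c_i)$. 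The classical structure theorem for Heisenberg-type projective representations supplies a unique irreducible of dimension $\mu_k=\prod_i k/(k,c_i)$ and forces $\mu_k\mid m_\lambda$. Hence $p(w)=\det(wI-Z)\in\bC[z]^m_\Gamma$, and the assignment $\rho\mapsto p$ gives the claimed disjoint decomposition.

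Next I would prove that each non-empty stratum $\Hom(\Gamma,U(m))_{p(w)}$ is connected. Conjugation by $U(m)$ acts transitively on unitaries with characteristic polynomial $p(w)$ with stabilizer $\prod_\lambda U(m_\lambda)$, so the space of admissible $Z$ is the connected partial flag variety $U(m)/\prod_\lambda U(m_\lambda)$; it therefore suffices to show that the fibre over a fixed diagonal $Z$ is connected. This fibre is a product over the distinct eigenvalues, and for $\lambda$ of order $k$ it parametrizes tuples $(A_1|_{V_\lambda},\ldots,A_n|_{V_\lambda})$ realizing the Heisenberg relations in $U(m_\lambda)$ together with $n-2t$ additional matrices commuting with everything. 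Decomposing $V_\lambda\cong W_k\otimes\bC^{s_\lambda}$, where $W_k$ is the unique irreducible projective Heisenberg representation of dimension $\mu_k$ and $s_\lambda=m_\lambda/\mu_k$, the first $2t$ matrices are determined up to the commutant $U(s_\lambda)$ acting on the multiplicity factor, and hence sweep out the connected homogeneous space $U(m_\lambda)/U(s_\lambda)$; the remaining $n-2t$ matrices must land in this commutant $U(s_\lambda)$ and commute pairwise, contributing the connected space $\Hom(\bZ^{n-2t},U(s_\lambda))$ of commuting unitary tuples. The fibre is thus a bundle of connected spaces over a connected base, hence connected.

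The count is now a routine partition generating-function exercise: specifying $p(z)\in\bC[z]^m_\Gamma$ amounts to choosing a non-negative multiplicity $m_\lambda$ for each root of unity $\lambda$, with $\mu_{k(\lambda)}\mid m_\lambda$ and $\sum m_\lambda=m$. Grouping by the order $k$ and using that there are $\Phi(k)$ primitive $k$-th roots of unity, each contributes a factor $(1-x^{\mu_k})^{-1}$, giving the advertised $\prod_{k\ge1}(1-x^{\mu_k})^{-\Phi(k)}$. The main obstacle is the connectedness step, which hinges on the projective Heisenberg classification together with the Borel-type connectedness of the space of commuting unitary tuples; once these ingredients are in place the proof assembles cleanly.
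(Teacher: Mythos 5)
Your proposal is essentially correct and arrives at the same decomposition and count as the paper, but you route the two key steps through different machinery. Where the paper analyzes the eigenspaces $V_\lambda$ by an explicit linear-algebra argument (Theorem~\ref{specAC}, constructing a basis $\{A_1^{p_1}\cdots A_t^{p_t}v_j\}$ by hand and reading off the divisibility $\sigma(D)\mid m_\lambda$), you instead invoke the Stone--von Neumann classification of projective representations of finite Heisenberg groups to get the unique irreducible of dimension $\mu_k$ and the consequent divisibility $\mu_k\mid m_\lambda$; these are the same mathematics, but yours cites the structure theorem rather than reproving it. Similarly, for connectedness the paper produces the explicit parameter map $\phi_D$ from $\left[\left(U(m)/\bT^l\right)\times(\bT^n)^l\right]/(\bZ_D^l\rtimes\Sigma_l)$, observes it is a surjection from a visibly connected space (Theorem~\ref{thm:BnD}), and then transports connectedness along the flag-bundle description of Theorem~\ref{thmfib}; you instead decompose $V_\lambda\cong W_k\otimes\bC^{s_\lambda}$ and argue that the space of $(A_1|_{V_\lambda},\dots,A_n|_{V_\lambda})$ fibers over the choices of the first $2t$ matrices with fiber $\Hom(\bZ^{n-2t},U(s_\lambda))$. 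Your route is perfectly viable and arguably more conceptual, though one step as written is slightly too strong: the first $2t$ matrices are \emph{not} ``determined up to the commutant $U(s_\lambda)$,'' because one may independently rescale each generator by a phase while preserving the commutator relations (this is exactly the $(\bT^n)^l$ factor appearing in the paper's $\phi_D$), so the base of your fibration is a torus bundle over $U(m_\lambda)/N$ for a suitable normalizer $N$ rather than a single homogeneous space. Since the extra torus parameters are still connected this does not break the conclusion, but the phrase ``sweep out the connected homogeneous space $U(m_\lambda)/U(s_\lambda)$'' should be corrected. With that repair your argument is a legitimate alternative: the paper's elementary construction buys explicit normal forms and a concrete model for the components (needed later for Theorems~\ref{thm:BnD} and~\ref{thmfib}), while your representation-theoretic route is shorter for Theorem~A alone but gives less explicit control of the component spaces.
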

\noindent For example, if $\Gamma_1$ denotes the integral Heisenberg group, then $\Hom(\Gamma_1, U(m))$ has
$1,2,4,7,13$ components for $m=1,2,3,4,5$ respectively (see Example \ref{Heisenberg}). The number of
components can be explicitly determined for any $m$ using Theorem A.

%\medskip

Next we describe the structure of the components. As explained in Section 4, there is a map
$B_n(U(m))\to T(n, \bQ/\bZ)$ defined using commutators, where $T(n, \bQ /\bZ)$ denotes the set of all $n\times n$
skew--symmetric matrices with entries in $\bQ /\bZ$. These matrices can be used to count
the components of the space of $n\times n$ almost commuting matrices. Given $D\in T(n,\bQ /\bZ)$ we
let $B_n(U(M))_D$ denote its inverse image under the map above.
For $2t\leq n$ and $d_1,d_2,\ldots,d_t\neq 0\in\bQ/\bZ$, let $D_n(d_1,d_2,\ldots d_t)=(d_{ij})\in \TnQZ$ be the skew-symmetric matrix with
\[
d_{ij}=
\begin{cases}
d_k &\mbox{if } (i,j)=(k+t,k),1\leq k \leq t;\\
-d_k &\mbox{if } (i,j)=(k,k+t),1\leq k \leq t;\\
0 &\mbox{otherwise.}
\end{cases}
\]
We show that $B_n(U(m))_D$ is non-empty if and only if $m$ is divisible by $\sigma(D):=\prod{|d_k|}$. If $m=l\sigma(D)$ for some positive integer $l$, then there is a map
\begin{equation*}
\phi_D:\bigslant {\left[\bigslant{\left(U(m)/\bT^l\right)\times (\bT^n)^l}{(\prod_{i=1}^t\bZ/|d_k|)^l}\right]}{\Sigma_l}\to B_n(U(m))_D
\end{equation*}
which is a rational homology equivalence for $l\geq 1$ and is a homeomorphism for $l=1$.
Moreover, $\phi_D$ induces a homeomorphism
$\bar{B}_n(U(m))_D\cong (\bT^n)^l/\Sigma_l$
after passing to quotients by the action of $U(m)$.

\begin{theoremB}
Let $p(z)= \prod_{j=1}^s (z-\lambda_j)^{m_j}$, where $\lambda_1, \dots , \lambda_s \in \bC$ are
distinct roots which are primitive $k_j$--th roots of unity.
If $\Gamma$ is a central extension of $\bZ^n$ by $\bZ$ with k--invariant
$\omega = c_1e_1^*\wedge e_{t+1}^* +\dots+c_te_t^*\wedge e_{2t}^*$
where $c_1,\dots , c_t$ are positive integers such that $c_i$ divides $c_{i+1}$,
then for every non--empty component there is a $U(m)$--equivariant homeomorphism
\[\Hom(\Gamma , U(m))_{p(z)}\cong U(m)\times_{\prod_{j=1}^s U(m_j)} \prod_{j=1}^s
B_n(U(m_j))_{D_n(-c_1q_j,\dots,-c_tq_j)}\]
where $q_j = {1\over{2\pi\sqrt{-1}}} ~log \lambda_j$.  %Moreover the orbit space under the action of $U(m)$ is homeomorphic to a product of symmetric products of circles $\prod_{j=1}^s \bT^{l_j}/\Sigma_{l_j}$ where $l_j = \prod_{i=1}^t k_j/(k_j, c_i)$ for $j=1, \dots, s$.
Moreover the orbit space under the action of $U(m)$ is homeomorphic to a product of symmetric products of tori $\prod_{j=1}^s (\bT^n)^{l_j}/\Sigma_{l_j}$ where $l_j = m_j/(\prod_{i=1}^t k_j/(k_j, c_i))$ for $j=1, \dots, s$.
\end{theoremB}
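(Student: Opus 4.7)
My plan is to exhibit the asserted homeomorphism by simultaneous block-diagonalization of the central element. Fix lifts $x_1,\ldots,x_n\in\Gamma$ of a basis of $\bZ^n$ realizing the normal form $\omega=\sum_{i=1}^t c_i\,e_i^*\wedge e_{t+i}^*$, so that in $\Gamma$ we have $[x_i,x_{t+i}]=z^{c_i}$ and $[x_i,x_j]=1$ otherwise. For any $\rho\in\Hom(\Gamma,U(m))_{p(z)}$, the matrix $\rho(z)$ is unitary with characteristic polynomial $p(z)$, hence diagonalizable, yielding an eigenspace decomposition $\bC^m=\bigoplus_{j=1}^s V_j$ with $\dim V_j=m_j$. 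Since $z$ is central in $\Gamma$, each $\rho(x_i)$ commutes with $\rho(z)$ and so preserves every $V_j$. The restricted tuple on $V_j$ then satisfies $[\rho(x_i)|_{V_j},\rho(x_{t+i})|_{V_j}]=\lambda_j^{c_i}\,\mathrm{Id}_{V_j}$ and is commuting otherwise, which under the commutator map $B_n(U(m_j))\to T(n,\bQ/\bZ)$ lands exactly in the component $B_n(U(m_j))_{D_n(-c_1q_j,\ldots,-c_tq_j)}$.

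Next I would construct the parametrizing map explicitly. Fix the reference element $A_0:=\mathrm{diag}(\lambda_1 I_{m_1},\ldots,\lambda_s I_{m_s})\in U(m)$, whose $U(m)$-centralizer is the block subgroup $H:=\prod_{j=1}^s U(m_j)$, and define
\[
\Phi\colon U(m)\times \prod_{j=1}^s B_n(U(m_j))_{D_n(-c_1q_j,\ldots,-c_tq_j)}\ \longrightarrow\ \Hom(\Gamma,U(m))_{p(z)}
\]
by $\Phi(g,(A^{(j)}))(z):=gA_0g^{-1}$ and $\Phi(g,(A^{(j)}))(x_i):=g\,\mathrm{diag}(A_i^{(1)},\ldots,A_i^{(s)})\,g^{-1}$. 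The factorwise almost-commuting relations assemble to precisely the defining relations of $\Gamma$, so $\Phi$ is well-defined; it is $U(m)$-equivariant under left multiplication on the first factor and invariant under the $H$-action (right multiplication on $g$, coordinatewise conjugation on the $A^{(j)}$), so it descends to $U(m)\times_H\prod_j B_n(U(m_j))_{D_n(\cdots)}$. The induced map is a bijection: surjectivity reduces to the spectral fact that any $\rho$ in the component can be conjugated to make $\rho(z)=A_0$, after which preservation of the reference eigenspaces forces each $\rho(x_i)$ to be block diagonal; injectivity holds because the conjugating element is ambiguous precisely by the centralizer $H$ of $A_0$. Both sides are compact Hausdorff (the target is closed in $U(m)^{n+1}$; the source is the quotient of a compact space by a compact group action), so the continuous bijection is automatically a homeomorphism. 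I expect the most delicate step to be the surjectivity verification, where one must ensure that every element of $\Hom(\Gamma,U(m))_{p(z)}$ really admits this block decomposition and that each resulting tuple on $V_j$ lies in the specific commutator-component indicated; this is where both the primitive-$k_j$-th-root condition and the divisibility $\mu_{k_j}\mid m_j$ underlying non-emptiness come into play.

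For the quotient by $U(m)$, the action collapses the left factor of the balanced product entirely, giving
\[
\Hom(\Gamma,U(m))_{p(z)}/U(m)\ \cong\ \prod_{j=1}^s \bigl(B_n(U(m_j))_{D_n(-c_1q_j,\ldots,-c_tq_j)}/U(m_j)\bigr).
\]
The paper's earlier identification $\bar{B}_n(U(m))_D\cong (\bT^n)^l/\Sigma_l$ with $l=m/\sigma(D)$ then converts each factor to $(\bT^n)^{l_j}/\Sigma_{l_j}$. A short computation using that $\lambda_j=e^{2\pi i q_j}$ is a primitive $k_j$-th root of unity shows that $-c_iq_j$ has order $k_j/(k_j,c_i)$ in $\bQ/\bZ$, whence $\sigma(D_n(-c_1q_j,\ldots,-c_tq_j))=\prod_{i=1}^t k_j/(k_j,c_i)=\mu_{k_j}$ and $l_j=m_j/\mu_{k_j}$, as claimed.
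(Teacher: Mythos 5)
Your proposal is correct and follows essentially the same route as the paper: both identify the stabilizer of the block-diagonal reference point $X_0=\mathrm{diag}(\lambda_1 I_{m_1},\ldots,\lambda_s I_{m_s})$ as $\prod_j U(m_j)$, both note that each $\rho(x_i)$ preserves the eigenspaces of the central image and restricts to a $D_n(-c_1q_j,\ldots,-c_tq_j)$-commuting tuple on $E_{\lambda_j}$, and both then deduce the balanced-product form and read off the quotient via $\bar B_n(U(m_j))_{D_j}\cong(\bT^n)^{l_j}/\Sigma_{l_j}$. The only presentational difference is that the paper delegates the identification $\Hom(\Gamma,U(m))_{p(z)}\cong U(m)\times_H \res_{p(z)}^{-1}(X_0)$ to Bredon's general lemma on equivariant maps to homogeneous spaces, whereas you construct the inverse map $\Phi$ explicitly and close the argument with the compact-Hausdorff continuous-bijection criterion; these are the same argument in two dialects.
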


\noindent These results give complete descriptions of the moduli spaces $\Rep(\Gamma, U(m))$,
extending the techniques and results in \cite{ACG}, \cite{ACGII}. This paper was motivated
by the results obtained in \cite{H} for the case $G=SU(2)$. As $\Gamma$ is nilpotent, by \cite{Bergeron} there are
homotopy equivalences $\Hom(\Gamma, U(m))\simeq \Hom(\Gamma, \text{GL}(n,\C))$ and
$\Rep(\Gamma, U(m))\simeq \Rep(\Gamma, \text{GL}(n,\C))$, and thus our results
also provide descriptions for these a priori more complicated spaces.

%\medskip

This paper is organized as follows: in \S 2 we provide preliminaries and background; in
\S 3 we discuss the spaces of almost commuting elements in the unitary groups;
in \S 4 and \S 5 we analyze the spaces $\Hom (\Gamma, U(m))$ where $\Gamma$ is
a central extension $1\to\bZ^r\to\Gamma\to\bZ^n\to1$.

\section{Preliminaries and background}

Let $X_1,\ldots, X_n\in U(m)$ be commuting unitary matrices. We say that $\lambda=(\lambda_1,\ldots,\lambda_n)\in \bT^n:=(\bS^1)^n$ is an $n$-tuple of eigenvalues of $(X_1,\ldots,X_n)$ if there exists a non-zero vector $v$ such that $X_iv=\lambda_iv$ for all $1\leq I \leq n$. Let
$E_{\lambda}=\bigcap_{i=1}^n E_{\lambda_i}(X_i)$,
where $E_{\lambda_i}(X_i)$ denotes the eigenspace of $X_i$ associated to the eigenvalue $\lambda_i$. It is well known that we can simultaneously diagonalize all of the matrices $X_i$. Thus there is a direct sum decomposition
\begin{equation}\label{eigendecomp}
 \bC^m=E_{\lambda^1}\oplus\ldots\oplus E_{\lambda^s}
\end{equation}
where $\lambda^1,\ldots,\lambda^s\in \bT^n$ are the distinct $n$-tuples of eigenvalues of $(X_1,\ldots,X_n)$. The decomposition is unique up to the order of the eigenvalues.

The space of ordered commuting $n$-tuples of $m\times m$ unitary matrices can be identified with $\Hom(\bZ^n,U(m))$. Let $T=\bT^m$ be the maximal torus of diagonal matrices in $U(m)$. Then $Z:=\Hom(\bZ^n,U(m))^T\cong (\bT^n)^m$ is the subspace of $\Hom(\bZ^n,U(m))$ consisting of ordered $n$-tuples of diagonal unitary matrices. Let $U(m)$ act on $U(m)\times Z$ by left multiplication on the first factor and on $\Hom(\bZ^n,U(m))$ by conjugation. Consider the $U(m)$-equvariant map
\begin{equation}\label{UmZabelian}
U(m)\times Z\to \Hom(\bZ^n,U(m))
\end{equation}
given by the conjugation action $(M,(X_i))\mapsto (MX_iM^{-1})$. Let $N=N_{U(m)}(\bT^m)$ be the normalizer and $W=N/\bT^m\cong \Sigma_m$ be the Weyl group. The map (\ref{UmZabelian}) factors through $U(m)\times_NZ\cong (U(m)/\bT^m)\times_{\Sigma_m}(\bT^n)^m$ and descends to
\begin{equation}\label{phiabelian}
\phi:(U(m)/\bT^m)\times_{\Sigma_m}(\bT^n)^m\to \Hom(\bZ^n,U(m)).
\end{equation}
The map $\phi$ has a geometric interpretation. Note that $U(m)/\bT^m$ is the space of ordered $m$-tuples of pairwisely orthogonal complex lines $(L_1,\ldots,L_m)$ in $\bC^m$. Hence, each element of the domain of $\phi$ can be regarded as an unordered $m$-tuple $[(L_1,\alpha^1),\ldots,(L_m,\alpha^m)]$ with $\alpha^1,\ldots,\alpha^m\in\bT^n$. The map $\phi$ sends such an element to the almost commuting tuple $(X_1,X_2,\ldots,X_n) \in B(U(m))_D$ such that each $\alpha^j$ is an $n$-tuple of eigenvalues of the matrices $X_i$ and each complex line $L_j$ lies in the common eigenspaces $E_{\alpha^j}$.

The map $\phi$ is surjective since commuting unitary matrices can be simultaneously diagonalized. It is not injective in general, but for $(X_1,\ldots,X_n)\in\Hom(\bZ^n,U(m))$ with eigenspace decomposition (\ref{eigendecomp}) and $\dim E_{\lambda^j}=m_j$, the preimage $\phi^{-1}(X_1,\ldots,X_n) \cong \prod_{j=1}^s(U(m_j)/\bT^{m_j})/\Sigma_{m_j}$ and is $\bQ$-acyclic \cite{B}.
The map $\phi$ is a special case of the action map
\begin{equation}\label{phigeneral}
G/T\times_WX^T\to X
\end{equation}
described in \cite{B} and \cite{AG2}, where $G$ is a connected compact Lie group with a maximal torus $T$ acting on a space $X$ with maximal rank isotropy subgroups and $W$ is the Weyl group associated to $T$. As observed in \cite{B}, for any field $\mathbb{F}$ with characteristic relative prime to $|W|$, the preimage of any point in $X$ under the map (\ref{phigeneral}) is $\mathbb{F}$-acyclic. By the Vietoris-Begle mapping theorem, it follows that
$H^{\ast}(X;\mathbb{F})\cong H^{\ast}(G/T\times_WX^T;\mathbb{F})  \cong H^{\ast}(G/T\times X^T;\mathbb{F})^W$.
In particular, $\phi$ induces an isomorphism in rational cohomology. Passing to the $U(m)$-quotients, $\phi$ becomes a homeomorphism and hence
$\Rep (\bZ^n, U(m))\cong (\bT^n)^m/\Sigma_m=\text{Sym}_m\bT^n$.
In the following sections, we will generalize $\phi$ and the results above to almost commuting $n$--tuples of unitary matrices.

\section{The space of almost commuting unitary matrices}

Let $G$ be a Lie group and $K$ be a closed subgroup contained in the center $Z(G)$ of $G$. The space of $K$-almost commuting $n$-tuples in $G$, denoted by $B_n(G,K)$, was studied in \cite{ACGII}. It consists of all ordered $n$-tuples $(A_1,A_2,\ldots,A_n)\in G^n$ such that the commutators $[A_i,A_j]\in K$ for all $1\leq i,j\leq n$.

An equivalent formulation for $K$-almost commuting $n$-tuples is in terms of group homomorphisms. Let $F_n$ be the free group on $n$ generators $a_1,\ldots,a_n$. A homomorphism $f:(F_n,[F_n,F_n])\to (G,K)$ is a group homomorphism $f:F_n\to G$ whose image $f([F_n,F_n])$ of the commutator subgroup $[F_n,F_n]\subset F_n$ is contained in $K$. It is clear that there is a bijection $f\mapsto (f(a_1),\ldots,f(a_n))$ between the sets of such homomorphisms and $K$-almost commuting $n$-tuples. We sometimes write $f\in B_n(G,K)$ to represent the corresponding almost commuting $n$-tuple.

In this section, we will study almost commuting tuples of unitary matrices. For notational simplicity, $B_n(U(m),Z(U(m)))$ will be abbreviated as $B_n(U(m))$.
\begin{lemma}\label{ACeigen}
Let $A,B$ be $m\times m$ unitary matrices with $[A,B]=\gamma I_m$. Then $\gamma^m=1$.
\end{lemma}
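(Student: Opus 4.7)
The plan is to extract the conclusion by taking determinants of both sides of the identity $[A,B] = \gamma I_m$, exploiting the fact that the determinant of any commutator of invertible matrices is automatically $1$.

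More precisely, since $A$ and $B$ are unitary they are in particular invertible, so I can write
\[
\det\bigl([A,B]\bigr) = \det(A)\,\det(B)\,\det(A)^{-1}\,\det(B)^{-1} = 1,
\]
using that determinants are scalars and hence commute. On the other hand, directly from the hypothesis $[A,B] = \gamma I_m$, I get $\det([A,B]) = \det(\gamma I_m) = \gamma^m$. Comparing these two expressions yields $\gamma^m = 1$.

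There is essentially no obstacle here; the only thing to watch is that the scalar $\gamma$ must be a root of unity of order dividing $m$, which is exactly what the lemma asserts. The convention for the commutator (whether $[A,B] = ABA^{-1}B^{-1}$ or $A^{-1}B^{-1}AB$) is irrelevant to the argument, since in either case the determinant of the commutator is $1$. Note that one also recovers automatically that $\gamma \in U(1)$, which is consistent with $[A,B]$ being unitary; the content of the lemma is the stronger divisibility statement $\gamma^m = 1$.
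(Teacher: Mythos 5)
Your proof is correct and follows exactly the same route as the paper: take determinants of both sides of $[A,B]=\gamma I_m$, noting that the determinant of a commutator of invertible matrices is $1$ while $\det(\gamma I_m)=\gamma^m$. The extra remarks about commutator conventions and $\gamma\in U(1)$ are fine but not needed.
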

\begin{proof}
$\gamma^m=\det{(\gamma I_m)}=\det{[A,B]}=\det{(ABA^{-1}B^{-1})}=1$.
\end{proof}
Suppose that $f:F_n\to U(m)$ is in $B_n(U(m))$. For any $u,v\in F_n$, $[f(u),f(v)]=\gamma I_m$ for some $m$-th root of unity $\gamma$ by Lemma \ref{ACeigen}. The exponential function $z\mapsto e^{2\pi\sqrt{-1}z}$ establishes a group isomorphism between $\bR/\bZ$ and $\bS^1\subset \bC$ with inverse $w\mapsto \frac{1}{2\pi\sqrt{-1}}\log w$. The multiplicative group of $m$-th roots of unity and all roots of unity corresponds to the subgroup $\bZ[\frac{1}{m}]/\bZ\cong \bZ/m$ and $\bQ/\bZ$ of $\bR/\bZ$ respectively under this isomorphism. Hence there is a map $F_n\times F_n \to  \bZ/m\subset \bQ/\bZ$ defined by $(u,v)\mapsto \frac{1}{2\pi\sqrt{-1}}\log \gamma$. Since $f([F_n,F_n])\subset Z(U(m))$, the map factors through the abelianization of $F_n\times F_n$ and thus gives rise to a $(\bQ/\bZ)$-valued skew-symmetric bilinear form $\omega_f:\bZ^n\times \bZ^n\to \bQ/\bZ$.

For an abelian group $A$, let $T(n,A)$ be the set of all $n\times n$ skew-symmetric matrices with entries in $A$. Define a map
\begin{equation}\label{def:phi}
\rho:B_n(U(m))\to  T(n,\bQ/\bZ)
\end{equation}
by $\rho(f)=(d_{ij})$, where $[f(a_i),f(a_j)]=e^{2d_{ij}\pi \sqrt{-1}}I_m$. For $D\in\TnQZ$, let $B_n(U(m))_D=\rho^{-1}(D)$. For $f\in B_n(U(m))_D$, the ordered $n$-tuple $(f(a_1),\ldots,f(a_n))$ is said to be $D$-commuting. Note that $\rho(f)$ is the skew-symmetric matrix associated to the bilinear form $\omega_f$.
The $\bZ$-module structure on $\bQ/\bZ$ induces a bi-module structure on $\TnQZ$ over the ring of $n\times n$ matrices with integral entries. The following proposition is about the effect of change of basis of $\bZ^n$ on $\rho$.
\begin{proposition}\label{congDhomeo}
Suppose that $D,D'\in T(n,\bQ/\bZ)$ and $D'=A^TDA$ for some $A\in GL(n,\bZ)$. Then there is an automorphism $\alpha:F_n\to F_n$ such that $f\in B_n(U(m))_D$ if and only if $f\circ \alpha\in B_n(U(m))_{D'}$. Hence $\alpha$ induces a homeomorphism $\alpha^{\ast}:B_n(U(m))_D\cong B_n(U(m))_{D'}$.
\end{proposition}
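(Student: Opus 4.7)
The plan is to realize the matrix $A$ as the abelianization of an honest automorphism of $F_n$ and then show that pre-composition with this automorphism implements the required change of commutator matrix on $B_n(U(m))$. The key input is the classical theorem of Nielsen that the natural map
\[
\operatorname{Aut}(F_n) \longrightarrow GL(n, \bZ),
\]
obtained by passing to the abelianization $F_n^{ab} \cong \bZ^n$, is surjective; concretely, the Nielsen transformations $a_j \mapsto a_j a_i^{\pm 1}$, the inversions $a_i \mapsto a_i^{-1}$, and the transpositions of generators are automorphisms of $F_n$ whose images generate $GL(n, \bZ)$. Lifting $A$ through this surjection yields an automorphism $\alpha : F_n \to F_n$ whose induced map on $\bZ^n$ is $A$, i.e.\ $\alpha(a_j)$ has abelianized image $\sum_k a_{kj} e_k$.

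Next I would check that $\alpha^*(f) := f \circ \alpha$ sends $B_n(U(m))_D$ into $B_n(U(m))_{D'}$. The crucial observation is that for any $f \in B_n(U(m))$ the image $H := f(F_n) \subset U(m)$ is nilpotent of class at most $2$, because $[H,H] \subset Z(U(m))$. Consequently the commutator pairing $H \times H \to [H,H]$ is bi-multiplicative: one has $[xy, z] = [x, z][y, z]$ and $[x^k, z] = [x, z]^k$ whenever $[x, z]$ is central. Writing each $\alpha(a_j)$ as a word whose image in $\bZ^n$ is $\sum_k a_{kj} e_k$, bilinearity gives
\[
[f(\alpha(a_i)), f(\alpha(a_j))] \;=\; \prod_{k, l} [f(a_k), f(a_l)]^{a_{ki} a_{lj}} \;=\; e^{2 \pi \sqrt{-1} \sum_{k,l} a_{ki} d_{kl} a_{lj}} I_m \;=\; e^{2 \pi \sqrt{-1} (A^T D A)_{ij}} I_m,
\]
so $\rho(f \circ \alpha) = A^T D A = D'$. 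Continuity is free, since each coordinate of $f \circ \alpha$ is a fixed word in the coordinates of $f$ viewed as a point of $U(m)^n$.

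Finally, applying the same argument to $\alpha^{-1}$ (whose abelianization is $A^{-1}$, and $(A^{-1})^T D' A^{-1} = D$) produces an inverse $(\alpha^{-1})^* : B_n(U(m))_{D'} \to B_n(U(m))_D$, so $\alpha^*$ is a homeomorphism. The main subtlety I would expect a reader to verify carefully is conceptual rather than computational: although an endomorphism of $F_n$ is not in general determined by its induced map on $\bZ^n$, the pulled-back map $\alpha^*$ on $B_n(U(m))$ does depend only on the abelianization of $\alpha$, precisely because $f(F_n)$ is $2$-step nilpotent. This is exactly what justifies the bilinearity calculation above and makes the choice of lift of $A$ to $\operatorname{Aut}(F_n)$ immaterial.
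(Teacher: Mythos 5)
Your proof is correct, and it reaches the same conclusion as the paper by essentially the same underlying mechanism (lifting $A$ to an automorphism of $F_n$ and pre-composing), but the presentation differs in a way worth noting. The paper reduces immediately to the case where $A$ is an elementary matrix, writes down the corresponding Nielsen transformation by hand for each type, and leaves the verification that $\rho(f\circ\alpha)=A^T D A$ implicit. You instead invoke the surjectivity of $\operatorname{Aut}(F_n)\to GL(n,\bZ)$ once to get a single lift $\alpha$, and then carry out the verification in full generality via the bilinearity of the commutator pairing on a $2$-step nilpotent image. This is a genuine reorganization: your version makes transparent the one fact the paper glosses over, namely that $[xy,z]=[x,z][y,z]$ holds once commutators are central, so that $\rho(f\circ\alpha)$ depends only on the abelianization of $\alpha$ and not on the chosen lift. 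Your remark to that effect is exactly the right thing to flag. The paper's piecewise route is slightly more self-contained (it does not quote Nielsen's theorem, but in effect reproves the needed surjectivity by exhibiting generators), while your route isolates the conceptual point and does the computation once. Both are complete and correct; the transformation rule $D\mapsto A^T D A$ and the inverse $\alpha^{-1}$ argument for the homeomorphism are handled correctly in your write-up.
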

\begin{proof}
Let $F_n$ be the free product on generators $a_1,\ldots,a_n$. Since every $A\in GL(n,\bZ)$ can be written as a product of finitely many elementary matrices, it suffices to prove the proposition when $A$ is an elementary matrix. If $A$ is the elementary matrix obtained from $I_n$ by adding $k$ times the $i$-th column to the $j$-th one, then $\alpha$ can be taken as the automorphism with $\alpha(a_j)=a_i^ka_j$ and $\alpha(a_l)=a_l$ for $l\neq j$. For the cases where $A$ is obtained from $I_n$ by swapping two columns or by multiplying a column by $-1$, $\alpha$ can also be chosen in the obvious way.
\end{proof}

For $0\leq t\leq n/2$ and $d_1,d_2,\ldots,d_t\neq 0\in\bQ/\bZ$, let $D_n(d_1,d_2,\ldots d_t)=(d_{ij})\in \TnQZ$ be the skew-symmetric matrix with
\[
d_{ij}=
\begin{cases}
d_k &\mbox{if } (i,j)=(k+t,k),1\leq k \leq t;\\
-d_k &\mbox{if } (i,j)=(k,k+t),1\leq k \leq t;\\
0 &\mbox{otherwise.}
\end{cases}
\]

Let $D=(d_{ij})\in\TnQZ$. By taking a common denominator, all the $d_{ij}$ are contained in the subgroup $\langle [\frac{1}{q}]\rangle\cong\bZ/q\subset \bQ/\bZ$ for some large enough integer $q$. By \cite[Proposition 4.1]{RV}, $D$ is congruent to some matrix $D_n(d_1,\ldots, d_t)$ with the orders of $d_i\in \bZ/q\subset \bQ/\bZ$ satisfying $|d_{i+1}|$ divides $|d_i|$ for all $1\leq i\leq t-1$. Hence, by Proposition \ref{congDhomeo}, understanding $B_n(U(m))_{D_n(d_1,\ldots, d_t)}$  is fundamental to the study of $B_n(U(m))$.

For $D=D_n(d_1,\ldots, d_t)$, define
\begin{equation}\label{def1sigma}
\sigma(D)=\prod |d_i|.
\end{equation}

\begin{theorem}\label{specAC}
Let $2t\leq n$, $D=D_n(d_1,\ldots, d_t)\in \TnQZ$ and $\gamma_{i}=e^{2d_i\pi \sqrt{-1}}$ for $1\leq i \leq t$. Then $B_n(U(m))_D$ is non-empty if and only if $\sigma(D)$ divides $m$. In that case, suppose $l=m/\sigma(D)\in\bN$ and $(A_1,\ldots A_n)\in B_n(U(m))_D$. Then there exist orthonormal vectors $v_1,\ldots, v_l\in \bC^m$ and $\alpha_{i'j},\beta_{ij}\in\bS^1\subset\bC$ for $1\leq i\leq t< i'\leq n$ and $1\leq j \leq l$ such that
\begin{enumerate}
\item $\{A_1^{p_1}A_2^{p_2}\ldots A_t^{p_t}v_j|\,0\leq p_i < |d_i| \text{ for } 1\leq i\leq t, 1\leq j\leq l\}$ is an orthonormal basis of $\bC^m$;
\item $A_1^{p_1}A_2^{p_2}\ldots A_t^{p_t}v_j$ is an eigenvector of $A_i$ with eigenvalue $\gamma_{i-t}^{p_{i-t}}\alpha_{ij}$ for $t+1\leq i \leq 2t$ and an eigenvector of $A_i$ with eigenvalue $\alpha_{ij}$ for $2t+1 \leq i \leq n$;
\item $A_i^{|d_i|}v_j=\beta_{ij}v_j$ for $1\leq i \leq t$.
\end{enumerate}
\end{theorem}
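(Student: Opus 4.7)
The plan is to unpack the explicit relations imposed by $D = D_n(d_1,\ldots,d_t)$. Its non-zero entries give $A_{k+t}A_k = \gamma_k A_k A_{k+t}$ for $1\leq k\leq t$, while every other pair $A_i,A_j$ commutes. Thus $A_1,\ldots,A_t$ form a commuting family, $A_{t+1},\ldots,A_n$ form a commuting family, and cross-commutation fails only for the matched pairs $(k,k+t)$. I will extract the divisibility condition from the action of $A_1,\ldots,A_t$ on the joint eigenspaces of $A_{t+1},\ldots,A_n$, and then use those eigenspaces to read off an orthonormal basis satisfying (1)--(3).

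Since $A_{t+1},\ldots,A_n$ are commuting unitaries they can be simultaneously diagonalized, giving $\bC^m = \bigoplus_\alpha W_\alpha$ indexed by eigenvalue tuples $\alpha = (\alpha_{t+1},\ldots,\alpha_n)$. A direct computation using $A_{k+t}A_k = \gamma_k A_k A_{k+t}$ (and the remaining commutations) shows that $A_k$ sends $W_\alpha$ onto $W_{\alpha'}$, where $\alpha'_{k+t} = \gamma_k\alpha_{k+t}$ and every other coordinate is unchanged. Because the $A_k$ themselves commute and each $\gamma_k$ has order exactly $|d_k|$, this $\bZ^t$-action on indices factors through $\prod_{k=1}^t \bZ/|d_k|$ and is free on the support $\{\alpha : W_\alpha\neq 0\}$. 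Each orbit therefore has exactly $\sigma(D)$ members, all of the same dimension (as the $A_k$ are unitary), so $m = \sigma(D)\cdot l$ for some $l\in\bN$.

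To realize the basis I pick one orbit representative $W_{\alpha^{(r)}}$ per orbit and note that $A_k^{|d_k|}$ commutes with every $A_i$ for $i>t$ (from $\gamma_k^{|d_k|}=1$), hence preserves each $W_\alpha$. The commuting unitaries $A_1^{|d_1|},\ldots,A_t^{|d_t|}$ restricted to each $W_{\alpha^{(r)}}$ are then simultaneously diagonalizable; concatenating an orthonormal eigenbasis of each $W_{\alpha^{(r)}}$ across all orbit representatives yields $v_1,\ldots,v_l$ satisfying (3) as well as $A_i v_j = \alpha_{ij}v_j$ for $i>t$, which is the $p=0$ case of (2). Extending to arbitrary $p=(p_1,\ldots,p_t)$ with $0\leq p_k < |d_k|$ via the identity $A_{k+t}A_k^{p_k} = \gamma_k^{p_k} A_k^{p_k} A_{k+t}$ yields (2) in full; and (1) follows because distinct $p$ place the vectors $A_1^{p_1}\cdots A_t^{p_t}v_j$ in distinct joint eigenspaces of $A_{t+1},\ldots,A_{2t}$, while fixed $p$ with varying $j$ gives a unitary image of an orthonormal set.

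For the ``if'' direction I exhibit a tuple when $\sigma(D)\mid m$ using clock--shift matrices. On $\bC^{|d_k|}$ the cyclic shift $S_k$ and the clock $C_k = \mathrm{diag}(1,\gamma_k,\ldots,\gamma_k^{|d_k|-1})$ satisfy $C_k S_k = \gamma_k S_k C_k$. Placing $S_k$ (resp.\ $C_k$) in the $k$-th factor of $\bigotimes_{k=1}^t \bC^{|d_k|}$ and the identity elsewhere defines $A_k$ (resp.\ $A_{k+t}$); setting $A_i = I$ for $i>2t$ and tensoring the whole construction with $I_l$ on $\bC^l$ produces the required element of $B_n(U(m))_D$. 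The main technical obstacle is verifying freeness of the $\prod \bZ/|d_k|$-action in the necessity argument, which is exactly what pins orbit size down to $\sigma(D)$; once that is established the remainder of the proof is straightforward eigenvalue bookkeeping.
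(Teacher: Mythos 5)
Your proof is correct and follows essentially the same route as the paper's: simultaneously diagonalize $A_{t+1},\ldots,A_n$, use $A_{k+t}A_k=\gamma_k A_kA_{k+t}$ to see that $A_k$ shifts the $A_{k+t}$-eigenvalue by $\gamma_k$, and extract $\sigma(D)\mid m$ from the fact that these shifts produce $\sigma(D)$ mutually orthogonal eigenspaces of equal dimension. The differences are organizational: where the paper peels off a single $\sigma(D)$-dimensional invariant subspace $V_1=\spn\{A_1^{p_1}\cdots A_t^{p_t}v_1\}$ and inducts on $V_1^{\perp}$, you handle all joint eigenspaces at once via the orbit structure of a $\prod_{k}\bZ/|d_k|$-action (which, being a translation action on a torus, is automatically free---so what you call the ``main technical obstacle'' is immediate), and your clock-shift tensor construction for existence is precisely the $Z_D$ construction the paper records right after the theorem.
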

\begin{remark}
\begin{enumerate}
\item The theorem reduces to the well-known result of commuting unitary matrices when $t=0$. In that case, $\sigma(D)=1$ and $l=m$.
\item The characteristic polynomial $\chi_i(z)$ of $A_i$ can be expressed in terms of $\alpha_{ij}$ and $\beta_{ij}$ as follows.
 \[
	\chi_i(z)=\begin{cases}
 \prod_j(z^{|d_i|}-\beta_{ij})^{m/(|d_i|l)} &\mbox{if } 1\leq i\leq t;\\
 \prod_j(z^{|d_{i-t}|}-\alpha_{ij}^{|d_{i-t}|})^{m/(|d_{i-t}|l)} &\mbox{if } t+1\leq i\leq 2t;\\
 \prod_j(z-\alpha_{ij})^{m/l} &\mbox{if } 2t+1\leq i\leq n.
   \end{cases}
  \]
\end{enumerate}
\end{remark}
\begin{proof}
First, suppose that $B_n(U(m))_D$ is non-empty and $(A_1,\ldots A_n)\in B_n(U(m))_D$. Note that $A_{t+1},\ldots A_n$ are pairwisely commuting unitary matrices and thus can be simultaneous diagonalized. In particular, there exist eigenvalues $\alpha_{i1}\in\bS^1$ of $A_i$, $t+1\leq i\leq n$, such that the intersection $W\subset \bC^m$ of the corresponding eigenspaces is non-zero. Suppose $v\in W$. Then for any $t+1\leq i \leq 2r$,
\begin{align*}
A_iA_1^{p_1}A_2^{p_2}\ldots A_t^{p_t}v&=\gamma_{i-r}^{p_{i-r}}A_1^{p_1}A_2^{p_2}\ldots A_t^{p_t}A_iv
=\gamma_{i-t}^{p_{i-t}}A_1^{p_1}A_2^{p_2}\ldots A_t^{p_t}\alpha_{i1}v\\
&=\gamma_{i-t}^{p_{i-t}}\alpha_{i1}A_1^{p_1}A_2^{p_2}\ldots A_t^{p_t}v.
\end{align*}
Similarly, for any $2t+1\leq i \leq n$,
\begin{align*}
A_iA_1^{p_1}A_2^{p_2}\ldots A_t^{p_t}v&=A_1^{p_1}A_2^{p_2}\ldots A_t^{p_r}A_iv
=A_1^{p_1}A_2^{p_2}\ldots A_t^{p_r}\alpha_{i1}v\\
&=\alpha_{i1}A_1^{p_1}A_2^{p_2}\ldots A_t^{p_t}v.
\end{align*}
%$$A_iA_1^{p_1}A_2^{p_2}\ldots A_t^{p_t}v=A_1^{p_1}A_2^{p_2}\ldots A_t^{p_r}A_tv=A_1^{p_1}A_2^{p_2}\ldots A_t^{p_r}\alpha_{i1}v\alpha_{i1}A_1^{p_1}A_2^{p_2}\ldots A_t^{p_t}v.$$
It follows that $A_1^{|d_1|},A_2^{|d_2|},\ldots,A_t^{|d_t|}$ restrict to unitary automorphisms on $W$. Since these $A_i^{|d_i|}$ commute and are unitary, there exists an unit vector $v_1\in W$ which is a common eigenvector for $A_1^{|d_1|},A_2^{|d_2|},\ldots,A_t^{|d_t|}$. Let $A_i^{|d_i|}v_1=\beta_{i1}v_1$. Then $v_1$ satisfies the properties (2) and (3) in the theorem. Let
\[V_1=\spn\{A_1^{p_1}A_2^{p_2}\ldots A_t^{p_t}v_1|\,0\leq p_i < |d_i| \text{ for } 1\leq i\leq t\}.\]
Any two vectors in $\{A_1^{p_1}A_2^{p_2}\ldots A_t^{p_t}v_1|\,0\leq p_i < |d_i| \text{ for }  1\leq i\leq t\}$ are eigenvectors of different eigenvalues of $A_i$ for some $t+1 \leq i \leq n$. Hence the spanning set is a basis of $V_1$ and $\dim(V_1)=\prod{|d_i|}=\sigma(D)$.
Let $V_1^{\perp}$ be the orthogonal complement of $V_1$ in $\bC^m$. Then $\dim(V_1^{\perp})=m-\sigma(D)$. For any $1\leq i\leq n$, since $V_1$ is invariant under the unitary matrix $A_i$, so is $V_1^{\perp}$. By repeating the argument above to the restrictions of $A_1,\ldots,A_n$ on $V_1^{\perp}$, we conclude by induction that $\sigma(D)$ divides $m$ and there exist vectors $v_1,v_2,\ldots, v_l$ with the desired properties.

If $m$ is divisible by $\sigma(D)$, one can choose arbitrary complex numbers $\alpha_{ij},\beta_{ij}\in\bS^1$ and vectors $A_1^{p_1}A_2^{p_2}\ldots A_t^{p_t}v_j$ for $0\leq p_i < |d_i|, 1\leq j\leq l,$ which form an orthonormal basis of $\bC^n$. Any such choice uniquely determines an ordered $n$-tuple $(A_1,\ldots,A_n)\in B_n(U(m))_D$ satisfying the properties stated in the theorem. This shows that $B_n(U(m))_D$ is non-empty if $m$ is divisible by $\sigma(D)$.
\end{proof}

Let $D=D_n(d_1,\ldots,d_t)$ and $m=l\sigma(D)$ for some positive integer $l$. By Proposition \ref{specAC}, $B_n(U(m))_D$ is non-empty. We will describe a subspace $Z_D\subset B_n(U(m))_D$ which is homeomorphic to a torus. In light of Theorem \ref{specAC}, it is more convenient for us to index the rows and columns of a $m\times m$ matrix by the indexing set
  \begin{equation}\label{eqn:I}
		I=\{(p_1,p_2,\ldots,p_t,j)|\,0\leq p_i < |d_i|,1\leq j\leq l\}.
	\end{equation}
\begin{definition}%\label{def:Z}
Let $D=D_n(d_1,\ldots,d_t)\in \TnQZ$ and $\gamma_{i}=e^{2d_i\pi \sqrt{-1}}$ for $1\leq i \leq t$. Suppose that $m=l\sigma(D)$ for some positive integer $l$. Index the rows and columns of $m\times m$ matrices by the index set $I$ in (\ref{eqn:I}). Define $Z_D\subset B_n(U(m))_D$ be the subset consisting of all ordered $n$-tuples $(A_1,\ldots,A_n)$ of the following forms, parametrized by $\alpha_{i'j},\beta_{ij}\in\bS^1\subset\bC$ for $1\leq i\leq t< i'\leq n$ and $1\leq j \leq l$:
\begin{enumerate}
\item
For $1\leq i\leq t,$
\[
(A_i)_{\mu\nu}=
\begin{cases}
1 &\mbox{if } \mu=(p_1,\ldots,p_i,\ldots,p_t,j),\\
&\nu=(p_1,\ldots,p_i-1,\ldots,p_t,j); \\
\beta_{ij} &\mbox{if } \mu=(p_1,\ldots,p_{i-1},1,p_{i+1},\ldots,p_t,j),\\
& \nu=(p_1,\ldots,p_{i-1},|d_i|-1,p_{i+1},\ldots,p_t,j);\\
0 &\mbox{otherwise.}\\
\end{cases}
\]

\item
For $t+1\leq i\leq 2t$,
\[
(A_i)_{\mu\nu}=
\begin{cases}
\gamma_{i-t}^{p_{i-t}}\alpha_{ij} &\mbox{if } \mu=\nu=(p_1,\ldots,p_t,j); \\
0 &\mbox{otherwise.}   \\
\end{cases}
\]

\item
For $2t+1\leq i\leq n$,
\[
(A_i)_{\mu\nu}=
\begin{cases}
\alpha_{ij} &\mbox{if } \mu=\nu=(p_1,\ldots,\ldots,p_t,j); \\
0 &\mbox{otherwise.}   \\
\end{cases}
\]
\end{enumerate}
\end{definition}

\begin{example}%\label{ex:Z}
If $m=6,n=5$ and  \[D=D_5(1/2,1/3)=
\begin{bmatrix}
0&0&-1/2&0&0\\
0&0&0&-1/3&0\\
1/2&0&0&0&0\\
0&1/3&0&0&0\\
0&0&0&0&0
\end{bmatrix},
\]
then $\gamma_1=-1,\gamma_2=e^{2\pi\sqrt{-1}/3},\sigma(D)=6$ and $l=1$. The subspace $Z_D\cong (\bS^1)^5$ consists of tuples $(A_1,\ldots,A_5)$ of the following forms
\setlength{\arraycolsep}{1pt}
\[
A_1=\begin{bmatrix}
    &\beta_{11}&&&&\\
    \;1\,&&&&&\\
    &&&\beta_{11}&&\\
    &&\,1\,&&&\\
    &&&&&\beta_{11}\\
    &&&&\,1\,&\\
    \end{bmatrix},
A_2=\begin{bmatrix}
    &&&&\beta_{21}&\\
    &&&&&\beta_{21}\\
    \;1\,&&&&&\\
    &\,1\,&&&&\\
    &&\,1\,&&&\\
    &&&\,1&&
    \end{bmatrix},
A_3=\begin{bmatrix}
    \alpha_{31}&&&&&\\
    &-\alpha_{31}&&&&\\
    &&\alpha_{31}&&&\\
    &&&-\alpha_{31}&&\\
    &&&&\alpha_{31}&\\
    &&&&&-\alpha_{31}\\
        \end{bmatrix},\]
\[A_4=\begin{bmatrix}
    \alpha_{41}&&&\\
    &\alpha_{41}&&\\
    &&\gamma_2\alpha_{41}&\\
    &&&\gamma_2\alpha_{41}\\
    &&&&\gamma_2^2\alpha_{41}&\\
    &&&&&\gamma_2^2\alpha_{41}
    \end{bmatrix}\text{ and }A_5=\begin{bmatrix}
    \alpha_{51}&&&&&\\
    &\alpha_{51}&&&&\\
    &&\alpha_{51}&&&\\
    &&&\alpha_{51}&&\\
    &&&&\alpha_{51}\\
    &&&&&\alpha_{51}
    \end{bmatrix}.
\]
In general for any positive integer $l$ and $m=l\sigma(D)$, each matrix $A_i$ in $(A_1,\ldots A_n)\in Z_D$ is a block sum of $l$ matrices of the form above.
\end{example}

By Theorem \ref{specAC}, any $(A_1,\ldots,A_n)\in B_n(U(m))_D$ is conjugate to an element in $Z_D$. In other words, the map
\begin{equation}\label{UZ}
U(m)\times Z_D\to B_n(U(m))_D
\end{equation}
given by the conjugation action $(M,(A_i))\mapsto(MA_iM^{-1})$ is surjective. The map is equivariant with respect to the $U(m)$-action given by left multiplication on the factor $U(m)$ of the domain and conjugation action on the target. This map (\ref{UZ}) is invariant under a few obvious actions on the domain corresponding to different choices of the vectors $v_j$ in Theorem \ref{specAC}.

\begin{enumerate}[(a)]

\item\label{S1action} For each $1\leq j \leq l$, an $\bS^1$-action on the domain is given by \[(M,(A_i))\mapsto(MI(j,\theta),(I(j,\theta)^{-1}A_iI(j,\theta)))=(MI(j,\theta),(A_i)),\]
where $\theta\in \bS^1$ and $I(j,\theta)\in U(m)$ is the matrix
\begin{equation}\label{Ijtheta}
I(j,\theta)_{\mu\nu}=
\begin{cases}
1 &\mbox{if } \mu=\nu=(p_1,\ldots,p_t,j'),j'\neq j\\
\theta &\mbox{if } \mu=\nu=(p_1,\ldots,p_t,j)\\
0 &\mbox{otherwise,}
\end{cases}
\end{equation}
obtained from the $m\times m$ identity matrix by multiplying all the $(p_1,\ldots,p_t,j)$-th columns, $0\leq p_i < |d_i|$, by $\theta\in \bS^1$. This action corresponds to replacing $v_j$ by $\theta v_j$ in Theorem \ref{specAC}.

\item\label{Zdiaction} For each $1\leq k\leq t,1\leq j \leq l$, an action on the domain is given by \[(M,(A_i))\mapsto(MA_k(j),(A_k(j)^{-1}A_iA_k(j))),\]
where $A_k(j)\in U(m)$ is defined by
\[
A_k(j)_{\mu\nu}=
\begin{cases}
1 &\mbox{if } \mu=(p_1,\ldots,p_k,\ldots,p_t,j),\nu=(p_1,\ldots,p_k-1,\ldots,p_t,j)\\
&\text{ or }\mu=\nu=(p_1,\ldots,p_k,\ldots,p_t,j'),j'\neq j;\\
\beta_{kj} &\mbox{if } \mu=(p_1,\ldots,p_{k-1},1,p_{k+1},\ldots,p_t,j),\\
& \nu=(p_1,\ldots,p_{k-1},|d_k|-1,p_{k+1},\ldots,p_t,j);\\
0 &\mbox{otherwise.}\\
\end{cases}
\]
This action corresponds to replacing $v_j$ by $A_kv_j$ in Theorem \ref{specAC}.
Note that $A_k(j)^{|d_k|}$ is equal to $I(j,\beta_{kj})$ in (\ref{Ijtheta}) above. Also, under the parametrization $Z_D\cong (\bT^n)^l$, this action multiplies $\alpha_{k+t,j}$ by $\gamma_k$ and keeps the other $\alpha_{i'j}$ and $\beta_{ij}$ fixed.

\item \label{permaction} A $\Sigma_l$-action on the domain is given by
\[(M,(A_i))\mapsto(MP_{\tau}^{-1},(P_{\tau}A_iP_{\tau}^{-1})),\]
where $\tau\in\Sigma_l$ and $P_{\tau}$ is the matrix
\[
P_{\tau}=
\begin{cases}
1 &\mbox{if } \mu=(p_1,\ldots,p_t,j),\nu=(p_1,\ldots,p_t,\tau(j))\\
0 &\mbox{otherwise.}
\end{cases}
\]
obtained by applying the permutation $\tau$ to the columns of the $m\times m$ identity matrix. This action corresponds to replacing $v_j$ by $v_{\tau(j)}$ in Theorem \ref{specAC}.
\end{enumerate}

Let $H$ be the group consisting of self homeomorphisms of $U(m)\times Z_D$ generated by the three types of actions above. Then the action induced by $\{I(j,\theta):1\leq j \leq l,\theta\in\bS^1\}$ in (\ref{S1action}) generates a normal subgroup of $H$ isomorphic to $\bT^l$ with quotient $H/\bT^l$ the wreath product
$\bZ_D\wr \Sigma_l\cong(\bZ_D)^l\rtimes \Sigma_l$ generated by the actions in (\ref{Zdiaction}) and (\ref{permaction}). Here $\bZ_D$ is the abelian group $\prod_{i=1}^t\bZ/|d_i|$. Hence, the map (\ref{UZ}) factors through
\[(U(m)\times Z_D)/H\cong \bigslant{\left(U(m)/\bT^l\right)\times (\bT^n)^l}{(\bZ_D\wr\Sigma_l)}
\cong \bigslant {\left[\bigslant{\left(U(m)/\bT^l\right)\times (\bT^n)^l}{(\bZ_D)^l}\right]}{\Sigma_l.}\]
%$$(U(m)\times Z_D)/H\cong \bigslant{\left(U(m)/\bT^l\right)\times (\bT^n)^l}{(\bZ_D\wr\Sigma_l)}\cong \bigslant {\left[\bigslant{\left(U(m)/\bT^l\right)\times (\bT^n)^l}{(\bZ_D)^l}\right]}{\Sigma_l.}$$
Our next theorem states that the induced factor map
\begin{equation}\label{UZqH}
\phi_D:\bigslant {\left[\bigslant{\left(U(m)/\bT^l\right)\times (\bT^n)^l}{(\bZ_D)^l}\right]}{\Sigma_l}\to B_n(U(m))_D
\end{equation}
of (\ref{UZ}) is a good approximation to $B_n(U(m))_D$.

\begin{theorem}\label{thm:BnD}
Let $2t\leq n$ and $D=D_n(d_1,\ldots,d_t)\in T(n,\bQ/\bZ)$. Suppose that $\sigma(D)=\prod{|d_i|}$ divides $m$ and $l=m/\sigma(D)$. Then the map $\phi_D$ in (\ref{UZqH}) is a rational homology equivalence for any $l\geq 1$ and a homeomorphism for $l=1$. The space $B_n(U(m))_D$ is path-connected and has rational cohomology
\begin{equation}\label{HBnmQ}
H^{\ast}(B_n(U(m))_D;\bQ)\cong H^{\ast}((U(m)/\bT^l)\times (\bT^n)^l;\bQ)^{\Sigma_l}.
\end{equation}
Also, $\phi_D$ induces a homeomorphism
$\bar{B}_n(U(m))_D:=B_n(U(m))_D/U(m)\cong (\bT^n)^l/\Sigma_l$
after passing to the $U(m)$-quotients.
\end{theorem}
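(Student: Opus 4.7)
The plan is to combine the explicit parametrization of $Z_D$ from Theorem \ref{specAC} with a Vietoris--Begle fiber argument in the spirit of Section 2. First the easy pieces. Surjectivity of $\phi_D$ is immediate from Theorem \ref{specAC}: every $D$-commuting tuple is $U(m)$-conjugate to an element of $Z_D$, so the map (\ref{UZ}) is surjective and hence so is $\phi_D$. The domain of $\phi_D$ is the quotient of the connected space $(U(m)/\bT^l)\times(\bT^n)^l$ by a finite group, hence connected; since $B_n(U(m))_D$ is a real algebraic subset of $U(m)^n$ and so is locally path-connected, surjectivity forces path-connectedness. For $l=1$, I would verify injectivity by hand: when $m=\sigma(D)$, Theorem \ref{specAC}(1) forces each common eigenspace of $X_{t+1},\ldots,X_n$ to be one-dimensional, so a choice of unit vector $v_1$ satisfying the theorem's conditions is unique up to a scalar (action (\ref{S1action})) and up to which eigenspace in a given $\bZ_D$-orbit one selects it from (action (\ref{Zdiaction})); both ambiguities are modded out inside $H$. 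Thus $\phi_D$ is a continuous bijection between compact Hausdorff spaces, hence a homeomorphism.

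For general $l$ the heart of the argument is to analyze the fibers of $\phi_D$ and show that they are $\bQ$-acyclic. Fix $X=(X_1,\ldots,X_n)\in B_n(U(m))_D$. Since $X_{t+1},\ldots,X_n$ are simultaneously diagonalizable, their common eigenspaces partition $\bC^m$, and $X_1,\ldots,X_t$ permute these eigenspaces in $\bZ_D$-orbits of size $\sigma(D)$. If the orbits have representative eigenspace dimensions $l_1,\ldots,l_s$ with $\sum l_i=l$, a choice of preimage amounts to choosing an ordered orthonormal basis of each representative eigenspace, and modding out by the parts of $\bT^l\rtimes(\bZ_D\wr\Sigma_l)$ that preserve the $X_i$ identifies the fiber with $\prod_{i=1}^s(U(l_i)/\bT^{l_i})/\Sigma_{l_i}$. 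By the Weyl-group argument recalled in Section 2 (Baird \cite{B}), each factor has trivial rational cohomology, so by K\"unneth the fiber is $\bQ$-acyclic. The Vietoris--Begle mapping theorem applied to the proper surjection $\phi_D$ of compact Hausdorff spaces yields $H^{\ast}(B_n(U(m))_D;\bQ)\cong H^{\ast}(\text{domain of }\phi_D;\bQ)$. Now the $(\bZ_D)^l$ factor of $H/\bT^l$ acts on $(U(m)/\bT^l)\times(\bT^n)^l$ through right translations by elements of the connected Lie groups $U(\sigma(D))^l\subset U(m)$ and $\bT^n$, hence these are homotopic to the identity and trivial on rational cohomology. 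The transfer for the remaining finite $\Sigma_l$-quotient then produces the invariants description (\ref{HBnmQ}). The main technical obstacle here is the careful bookkeeping of the fiber description in non-generic eigenvalue configurations, where the orbit sizes $l_i$ can vary.

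Finally, to identify the orbit space, I would pass to $U(m)$-quotients. The $U(m)$-action on the domain is only on the $U(m)/\bT^l$ factor by left translation, which is transitive; thus the $U(m)$-quotient of the domain is $(\bT^n)^l/((\bZ_D)^l\rtimes\Sigma_l)$. Each $\bZ_D$-factor acts on $\bT^n$ by translation by a fixed finite-order element, so the quotient of a torus by such translations is canonically again a torus of the same dimension, identifying $(\bT^n)^l/(\bZ_D)^l$ with $(\bT^n)^l$; further quotienting by $\Sigma_l$ produces $(\bT^n)^l/\Sigma_l$. Since $\phi_D$ is $U(m)$-equivariant and surjective, and the fiber analysis above shows that every fiber of $\phi_D$ lies in a single $U(m)$-orbit of the domain, the induced map on $U(m)$-orbit spaces is a continuous bijection between compact Hausdorff spaces, hence the desired homeomorphism $\bar B_n(U(m))_D\cong(\bT^n)^l/\Sigma_l$.
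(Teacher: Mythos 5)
Your proposal is correct and follows essentially the same route as the paper's proof: surjectivity via Theorem~\ref{specAC}, identification of the fibers of $\phi_D$ with products of Weyl quotients $(U(l_i)/\bT^{l_i})/\Sigma_{l_i}$ that are $\bQ$-acyclic by the Baird argument, the Vietoris--Begle theorem for the rational homology equivalence, triviality of the $\bZ_D$-action on rational cohomology via connectedness of $U(m)$ and $\bT^n$, and transitivity of the $U(m)$-action on each fiber for the orbit-space identification. The only cosmetic difference is that you establish path-connectedness from surjectivity plus local path-connectedness of a real-algebraic set rather than reading it off the $H^0$ isomorphism, and you argue the $l=1$ case by direct injectivity rather than by observing the fiber is a point; both are equivalent in substance.
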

%***(Should give a geometric explanation of $\phi$ and its domain)***
\begin{proof}
Since the map (\ref{UZ}) is surjective by Theorem \ref{specAC}, so is $\phi_D$. Let $(A_1,\ldots,A_n)\in B_n(U(m))_D$. Since $A_{t+1},A_{t+2},\ldots,A_n$ are pairwisely commuting, they can be simultaneously diagonalized. Let $\Lambda$
be the set of all $(n-t)$-tuples $\lambda=(\lambda_{t+1},\ldots,\lambda_n)\in \bT^{n-t}$ of eigenvalues of $(A_{t+1},A_{t+2},\ldots,A_n)$ and
$\Lambda_0=\{\lambda=(\lambda_{t+1},\ldots,\lambda_n)\in\Lambda|\, 0\leq \tfrac{1}{2\pi\sqrt{-1}}\log \lambda_{t+i}<\tfrac{1}{|d_i|}, \forall 1\leq i \leq t\}.$
Suppose $\lambda^1,\ldots,\lambda^s$ are all the distinct elements in $\Lambda_0$. Let $l_j=\dim E_{\lambda^j}$. Then
$\sum_{j=1}^sl_j=l$. One can show that the preimage
$\phi_D^{-1}(A_1,\ldots,A_n)\cong \prod_{j=1}^s U(l_j)/(T^{l_j}\rtimes\Sigma_{l_j})$,
which is $\bQ$-acyclic in general and a single point if $l=1$. Hence, $\phi_D$ is a rational homology equivalence for any $l\geq 1$ by the Vietoris-Begle mapping theorem and a homeomorphism for $l=1$. In particular, $B_n(U(m))_D$ is path-connected. Note that the self-homeomorphisms of $(U(m)/\bT^l)\times (\bT^n)^l$ given by the action of $\bZ_D$ in the domain of $\phi_D$ are homotopic to the identity map. Thus we have the isomorphism (\ref{HBnmQ}). Also, each preimage of $\phi_D$ is $U(m)$-transitive and so $\phi_D$ becomes a homeomorphism after passing to the $U(m)$-quotients. Hence,
\begin{align*}
\bar{B}_n(U(m))_D
&\cong \bigslant{\left[\bigslant{(\bT^n)^l}{(\bZ_D)^l}\right]}{\Sigma_l}
\cong \bigslant{\left[\bigslant{\bT^n}{\bZ_D}\right]^l}{\Sigma_l}\\
&\cong \bigslant{\left[\bigslant{(\bS^1)^n}{(\prod_{i=1}^t\bZ/|d_i|)}\right]^l}{\Sigma_l}\\
&\cong \bigslant{\left[\left(\prod_{i=1}^t\bigslant{\bS^1}{(\bZ/|d_i|)}\right)\times(\bS^1)^{n-t}\right]^l}{\Sigma_l}\\
&\cong \bigslant{\left[(\prod_{i=1}^t\bS^1)\times(\bS^1)^{n-t}\right]^l}{\Sigma_l}
=(\bT^n)^l/\Sigma_l.
\end{align*}
\end{proof}

%\subsection{Path connected components of $B_n(U(m))$.}
By Theorem \ref{specAC} and \ref{thm:BnD}, we know that for $D=D_n(d_1,\ldots,d_t)$, the space $B_n(U(m))_D$ is non-empty and path connected if $\sigma(D)$ divides $m$ and empty otherwise. We will extend the definition of $\sigma$ in such a way that the same statement holds for any $D\in\TnQZ$. This allows us to identify the path connected components of $B_n(U(m))$ and compute the number of them.

\begin{definition}
For any $n\times n$ matrix $A\in M_{n\times n}(\bQ/\bZ)$, the row space $R(A)$ of $A$ is the sub-module of $(\bQ/\bZ)^n$ generated by the rows of $A$ over $\bZ$. Define
\begin{equation}\label{def2sigma}
\sigma(A)=\sqrt{|R(A)|},
\end{equation}
where $|R(A)|$ is the cardinality of $R(A)$.
\end{definition}

%As we will show in the next lemma, this definition agrees with our previous definition of $\sigma(D)$ for $D=D_n(d_1,\ldots,d_t)$ in (\ref{def1sigma}).

\begin{lemma}\label{lem:sigmaD}
\begin{enumerate}[(a)]
\item\label{lem:sigmaDa} For $D=D_n(d_1,\ldots,d_t)\in \TnQZ$, $\sqrt{|R(D)|}=\prod_i|d_i|$.
\item\label{lem:sigmaDb} If $A,A'\in M_{n\times n}(\bQ/\bZ)$ with $A'=BAC$ for some $B,C\in GL(n,\bZ)$, then $\sigma(A)=\sigma(A')$.
\item\label{lem:sigmaDc} For $D\in\TnQZ$, $\sigma(D)$ is an integer. If $\sigma(D)$ divides $m$, then $D\in \Tnm$.
\end{enumerate}
\end{lemma}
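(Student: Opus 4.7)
The plan is to dispatch the three parts in order, with (a) and (b) providing the ingredients needed for (c).

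For (a), I would unpack the definition of $D_n(d_1,\ldots,d_t)$. Its only non-zero rows are the first $2t$: for $1 \leq k \leq t$, row $k$ has a single entry $-d_k$ in column $k+t$, while row $k+t$ has a single entry $d_k$ in column $k$. The supports of these $2t$ rows are the $t$ coordinate directions indexed by $\{1,\ldots,t\}$ together with the $t$ coordinate directions indexed by $\{t+1,\ldots,2t\}$, all pairwise disjoint. Hence the subgroup of $(\bQ/\bZ)^n$ they generate splits as an internal direct sum of $2t$ cyclic groups, one of order $|d_k|$ for each $k$ occurring twice. This yields $|R(D)| = \prod_{k=1}^t |d_k|^2$ and $\sqrt{|R(D)|} = \prod_{k=1}^t |d_k|$.

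For (b), left multiplication by $B \in GL(n,\bZ)$ rewrites each row of $BA$ as an integer combination of rows of $A$, and the analogous statement with $B^{-1}$ shows conversely that each row of $A$ is an integer combination of rows of $BA$; hence $R(BA) = R(A)$ as subgroups of $(\bQ/\bZ)^n$. Right multiplication by $C\in GL(n,\bZ)$, meanwhile, acts on $(\bQ/\bZ)^n$ via the group automorphism $x\mapsto xC$ (whose inverse is $x \mapsto xC^{-1}$), and this automorphism carries $R(A)$ bijectively onto $R(AC)$. Chaining the two gives $|R(A')|=|R(BAC)|=|R(AC)|=|R(A)|$, so $\sigma(A')=\sigma(A)$.

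For (c), I would invoke the congruence normal form from \cite{RV} cited just above the lemma: every $D \in \TnQZ$ can be written as $A^T D_n(d_1,\ldots,d_t) A$ for some $A \in GL(n,\bZ)$. Combining parts (a) and (b) then gives $\sigma(D) = \prod_{i=1}^t |d_i|$, which is a positive integer. For the divisibility claim, observe that each $|d_k|$ divides $\sigma(D) = |d_k|\cdot\prod_{i\neq k}|d_i|$, so if $\sigma(D)\mid m$ then every entry $\pm d_k$ of $D_n(d_1,\ldots,d_t)$ has order dividing $m$ in $\bQ/\bZ$, i.e.\ lies in $\bZ/m$. Since the conjugation $A^T(\cdot)A$ produces only integer linear combinations of these entries, and $\bZ/m\subset\bQ/\bZ$ is closed under such combinations, we conclude $D \in \Tnm$. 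The only genuinely delicate step is the direct-sum splitting in (a): one must verify that the non-zero rows live on pairwise disjoint coordinate axes so that no relations arise between them. Everything else reduces to invariance under $GL(n,\bZ)$-equivalence together with the normal form already in hand.
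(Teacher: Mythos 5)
Your proof is correct, and parts (a), (b), and the first claim of (c) follow the paper's route exactly: (a) by the disjoint-support decomposition of the rows (which the paper calls ``obvious''), (b) by row-space invariance under left multiplication together with the fact that $v\mapsto vC$ is an automorphism of $(\bQ/\bZ)^n$, and the integrality of $\sigma(D)$ via the congruence to a normal form $D_n(d_1,\dots,d_t)$ from \cite[Proposition 4.1]{RV}.

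The one place you diverge is the final divisibility claim in (c). The paper argues the contrapositive directly, without the normal form: if some entry $d_{ij}\notin\bZ/m$, then the $i$-th and $j$-th rows of $D$ (which, by skew-symmetry, project to $(0,d_{ij})$ and $(-d_{ij},0)$ in the coordinates $i,j$) generate a subgroup of $R(D)$ whose order is a multiple of $|d_{ij}|^2$, so $|d_{ij}|\mid\sigma(D)$ and $\sigma(D)\nmid m$. You instead observe that in the normal form each $|d_k|$ divides $\sigma(D)$, hence $\pm d_k\in\bZ/m$, and then push this back to $D$ since congruence by an integer matrix only forms $\bZ$-linear combinations of entries and $\bZ/m\subset\bQ/\bZ$ is closed under such combinations. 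Both arguments are sound; yours re-uses the normal form already in play from the integrality step, while the paper's is more local and self-contained, requiring only skew-symmetry of $D$ and no appeal to \cite{RV} for this final implication.
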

\begin{proof}
Part (\ref{lem:sigmaDa}) is obvious. For part (\ref{lem:sigmaDb}), since $BA$ can be obtained from $A$ by elementary row operations, $R(BA)=R(A)$. Also, note that the multiplication map $v\mapsto vC$ for any row vectors $v\in(\bQ/\bZ)^n$ establishes an automorphism of $(\bQ/\bZ)^n$. Hence, $R(A')=R(BAC)\cong R(BA)=R(A)$ as $\bZ$-modules. This proves part (\ref{lem:sigmaDb}). For part (\ref{lem:sigmaDc}), note that $D\in T(n,\bZ/q)$ for some positive integer $q$. By \cite[Proposition 4.1]{RV}, $D$ is congruent to some matrix $D'=D_n(d_1,\ldots,d_t)\in T(n,\bZ/q)$. It follows from part (\ref{lem:sigmaDa}) and (\ref{lem:sigmaDb}) that $\sigma(D)=\sigma(D')=\prod_i|d_i|$ is an integer. Finally, if $D\in\TnQZ$ has an entry $d_{ij}\notin \bZ/m$, then $i$-th and $j$-th rows of $D$ generate a $\bZ$-submodule of $R(D)$ with cardinality a multiple of $|d_{ij}|^2$. This implies $|d_{ij}|$ divides $\sigma(D)$ and so $\sigma(D)$ does not divide $m$.
\end{proof}

\begin{corollary}\label{cor:BnUgenD}
For $D\in\TnQZ$, the space $B_n(U(m))_D$ is non-empty and path connected if $\sigma(D)$ divides $m$, and is empty otherwise. Hence, $B_n(U(m))$ can be expressed as the disjoint union
$B_n(U(m))=\coprod\limits_{\substack{D\in T(n,\bZ/m)\\ \sigma(D)|m}}B_n((U(m))_D$ of path connected components.
\end{corollary}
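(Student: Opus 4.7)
The plan is to reduce an arbitrary $D\in\TnQZ$ to the normal form $D_n(d_1,\ldots,d_t)$ already handled by Theorems \ref{specAC} and \ref{thm:BnD}, and then to argue that the fibers of $\rho$ from (\ref{def:phi}) separate the path components of $B_n(U(m))$. By \cite[Proposition 4.1]{RV}, there exist $A\in GL(n,\bZ)$ and $d_1,\ldots,d_t$ with $D':=D_n(d_1,\ldots,d_t)=A^TDA$. Proposition \ref{congDhomeo} yields a homeomorphism $B_n(U(m))_D\cong B_n(U(m))_{D'}$, and Lemma \ref{lem:sigmaD}(a)--(b) together give $\sigma(D)=\sigma(D')=\prod_i|d_i|$.

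With this reduction in hand, the two alternatives in the first sentence follow immediately. If $\sigma(D)\nmid m$, then $\sigma(D')\nmid m$, so Theorem \ref{specAC} forces $B_n(U(m))_{D'}$ to be empty, and hence $B_n(U(m))_D$ is empty. Conversely, if $\sigma(D)\mid m$, then Theorem \ref{thm:BnD} shows $B_n(U(m))_{D'}$ is non-empty and path-connected, and the above homeomorphism transports both properties to $B_n(U(m))_D$.

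For the disjoint-union statement, I would note that the commutators $[f(a_i),f(a_j)]$ depend continuously on $f\in B_n(U(m))$ and are constrained by Lemma \ref{ACeigen} to lie in the finite discrete set $\{\gamma I_m:\gamma^m=1\}$. Consequently $\rho$ of (\ref{def:phi}) actually takes values in the finite set $\Tnm$ and is locally constant on $B_n(U(m))$, so each fiber $B_n(U(m))_D$ is simultaneously open and closed. Lemma \ref{lem:sigmaD}(c) further ensures that any $D$ with $\sigma(D)\mid m$ already lies in $\Tnm$, so the non-empty fibers are exactly those indexed by $\{D\in\Tnm:\sigma(D)\mid m\}$. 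Combined with the path-connectedness established in the previous paragraph, each such fiber is a single path component, yielding the claimed decomposition. The only technical point worth verifying is the local constancy of $\rho$, and this is free once one observes that the target scalars are confined to the discrete group of $m$-th roots of unity; otherwise the argument is a straightforward assembly of the previously developed lemmas.
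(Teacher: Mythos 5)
Your proposal is correct and follows essentially the same route as the paper: reduce to the normal form $D_n(d_1,\ldots,d_t)$ via \cite[Proposition 4.1]{RV} and Proposition \ref{congDhomeo}, use Lemma \ref{lem:sigmaD} to transport $\sigma$, invoke Theorems \ref{specAC} and \ref{thm:BnD} for (non-)emptiness and path-connectedness, and then observe that $\rho$ separates path components. The paper condenses the last step to the bare remark that $\rho$ is continuous; your observation that $\rho$ is in fact locally constant because it lands in the discrete set $\Tnm$, together with Lemma \ref{lem:sigmaD}(c) pinning down the indexing set, is exactly the intended justification and makes it explicit.
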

\begin{proof}
By \cite[Proposition 4.1]{RV}, $D$ is congruent to some matrix $D'=D_n(d_1,\ldots,d_t)\in T(n,\bQ/\bZ)$. Since $B_n(U(m))_D\cong B_n(U(m))_{D'}$ by Proposition \ref{congDhomeo} and $\sigma(D)=\sigma(D')$ by Lemma \ref{lem:sigmaD}, the first statement of the corollary follows from the corresponding statement for $D'$ in Theorem \ref{specAC} and \ref{thm:BnD}. Since the map $\rho$ in (\ref{def:phi}) is continuous, $B_n(U(m))$ can be expressed as the disjoint union of path connected components above.
\end{proof}

By Corollary \ref{cor:BnUgenD}, the number of path-connected components of $B_n(U(m))$ is equal to
\begin{equation}\label{Nnm}
N(n,m)=|\{D\in T(n,\bZ/m) : \sigma(D)\text{ divides }m\}|.
\end{equation}

We will derive formulas for $N(n,m)$. To do this, we need to introduce some notation. For positive integers $k,n \geq 1$, let $J'(n,k)$ be the set of all partitions of $k$ with at most $n/2$ parts and $J(n,k)=\cup_{1\leq k'\leq k} J'(n,k')$. Every element in $J(n,k)$ is a partition which can be written uniquely in the form
\[
\underbrace{a_1+\ldots +a_1}_{t_1\; {a_1}'s}+\underbrace{a_2+\ldots +a_2}_{t_2\; {a_2}'s}+\ldots+ \underbrace{a_j+\ldots +a_j}_{t_j\; {a_j}'s}
\]
with $a_1>a_2>\ldots>a_j>0,\sum t_ia_i \leq k$ and $\sum t_i \leq n/2$. For instance, $J(5,4)$ consists of the eight partitions
$1,2,1+1,3,2+1,4,3+1$ and $2+2$.

For the rest of this section, we will denote the elements of $\bZ/m$ by $0,1,2,\ldots, m-1$ rather than $[0],[\tfrac{1}{m}],[\tfrac{2}{m}],\ldots,[\tfrac{m-1}{m}]$. Suppose that $q=p^k$ is a power of a prime $p$ with $k\geq 1$ and $\alpha\in J(n,k)$ is the partition $\sum_{i=1}^j t_ia_i$. Let $b_i=k-a_i$. Define $D_{\alpha}\in T(n,\bZ/q)$ to be the matrix
\[
D_{\alpha}=D_n(\underbrace{p^{b_1},\ldots, p^{b_1}}_{t_1\; {p^{b_1}}'s},\underbrace{p^{b_2},\ldots, p^{b_2}}_{t_2\; {p^{b_2}}'s},\ldots, \underbrace{p^{b_j},\ldots, p^{b_j}}_{t_j\; {p^{b_j}}'s})
\]
Note that $\sigma(D_{\alpha})=\prod (p^{a_i})^{t_i}=p\,^{\sum t_ia_i}$ divides $q$. %Consider the action of $GL(n,\bZ/q)$ on $T(n,\bZ/q)$ given by $A\cdot D=A^TDA$. Two matrices in $T(n,\bZ/q)$ are said to be similar if they are in the same orbit of this action.
\begin{proposition}
Fix $k,n\geq 1$. Let $p$ be a prime, $q=p^k$ and $\alpha\in J(n,k)$. Then the number of matrices in $T(n,\bZ/q)$ congruent to $D_{\alpha}$ is given by
\[N_p(\alpha)=\frac{p\,^{\sum_{i=1}^j t_ia_i(s_{i-1}+s_i-1)}\cdot\prod_{l=s_j+1}^n\left(1-\frac{1}{p^l}\right)}{\prod_{i=1}^j\prod_{l=1}^{t_i}\left(1-\frac{1}{p^{2l}}\right)}\]
where $s_i=n-2\sum_{l=1}^it_l$.
The number of matrices $D\in T(n,\bZ/q)$ with $\sigma(D)$ divides $q$ is equal to
\begin{equation}\label{Nnq}
N(n,q)=1+\sum_{\alpha\in J(n,k)} N_p(\alpha).
\end{equation}
\end{proposition}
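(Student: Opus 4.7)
The second identity comes nearly for free from the classification in \cite[Prop.~4.1]{RV} combined with Lemma~\ref{lem:sigmaD}: every $D \in T(n,\bZ/p^k)$ is $GL(n,\bZ)$-congruent to a unique matrix of the form $D_n(p^{b_1},p^{b_1},\ldots)$ with the multiplicity of each value being even; and $\sigma(D) \mid q$ iff, after the change of variable $a_i = k - b_i$, the nonzero data corresponds to a partition in $J(n,k)$. The zero matrix contributes the isolated $+1$. Thus everything reduces to computing $N_p(\alpha)$, the size of the $GL(n,\bZ)$-congruence class of $D_\alpha$.

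The $GL(n,\bZ)$-congruence action $D \mapsto B^{T}DB$ on $T(n,\bZ/p^k)$ factors through $GL(n,\bZ/p^k)$, so $N_p(\alpha)$ is the size of a single orbit. Orbit-stabilizer gives
\[ N_p(\alpha) = \frac{|GL(n,\bZ/p^k)|}{|\mathrm{Stab}(D_\alpha)|}, \qquad |GL(n,\bZ/p^k)| = p^{kn^2}\prod_{l=1}^{n}(1-p^{-l}), \]
and so the work is to compute $|\mathrm{Stab}(D_\alpha)|$. I would do this by filtering $(\bZ/p^k)^n$ by the submodules cut out by the invariant factors $p^{b_i}$ of $D_\alpha$; any element of $\mathrm{Stab}(D_\alpha)$ must preserve this filtration. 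The diagonal blocks of a stabilizing $B$ then land in the \emph{Levi factor}
\[ \prod_{i=1}^{j}\mathrm{Sp}\bigl(2t_i,\bZ/p^{a_i}\bigr)\;\times\; GL\bigl(s_j,\bZ/p^k\bigr), \]
while the strictly lower-triangular blocks contribute a purely $p$-power \emph{unipotent factor} whose exponent is controlled by the inclusion pattern of the strata and is therefore governed by the integers $s_i = n - 2\sum_{l\le i}t_l$. Assembling the standard orders $|\mathrm{Sp}(2t_i,\bZ/p^{a_i})| = p^{a_i t_i(2t_i+1)}\prod_{l=1}^{t_i}(1-p^{-2l})$ and $|GL(s_j,\bZ/p^k)| = p^{k s_j^{2}}\prod_{l=1}^{s_j}(1-p^{-l})$ with the unipotent count and forming the ratio, one sees that the $(1-p^{-l})$ factors telescope cleanly to $\prod_{l=s_j+1}^{n}(1-p^{-l})/\prod_i\prod_{l=1}^{t_i}(1-p^{-2l})$, and the $p$-power collapses to the exponent $\sum_i t_i a_i(s_{i-1}+s_i-1)$ claimed in the statement.

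The main obstacle is the bookkeeping for the unipotent part of the stabilizer. Concretely, an off-diagonal block of $B$ taking the $p^{b_i}$-stratum into the $p^{b_{i'}}$-stratum is free only modulo $p^{\min(a_i,a_{i'})}$ and moreover is subject to a symmetry constraint from the skew form (its "transpose" block is determined); extracting the net count of free $p$-adic digits across all stratum pairs is what produces the nontrivial quadratic expression $s_{i-1}+s_i-1$. I expect the cleanest route to be induction on $j$, peeling off the top block $D_n(p^{b_1},\ldots,p^{b_1})$ first: the inductive step reduces to computing the stabilizer of $D_{\alpha'}$ (with $\alpha'$ obtained by deleting the $t_1$ copies of $a_1$) inside $GL(n-2t_1,\bZ/p^k)$, together with counting the additional freedoms arising from the off-diagonal coupling to the first stratum, where the constraints from the symplectic form on the first block can be made explicit.
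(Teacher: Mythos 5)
Your proof starts from the same orbit--stabilizer identity $N_p(\alpha)=|GL(n,\bZ/q)|/|G_{D_\alpha}|$ as the paper, but the way you compute the stabilizer order is genuinely different. The paper identifies $|G_{D_\alpha}|$ with the number of ordered bases $v_1,\ldots,v_n$ of $(\bZ/q)^n$ satisfying $\omega(v_i,v_j)=d_{ij}$ and then counts these by a Gram--Schmidt-style procedure, choosing $v_1,v_{t+1},v_2,v_{t+2},\ldots$, then the $s_j$ ``spectator'' vectors last, multiplying the per-step counts. You instead propose to read off $|G_{D_\alpha}|$ structurally, as the order of a parabolic-type subgroup with Levi quotient $\prod_i \mathrm{Sp}(2t_i,\bZ/p^{a_i})\times GL(s_j,\bZ/p^k)$ and a $p$-power ``unipotent'' complement, either by bookkeeping over the filtration or by peeling off the $p^{b_1}$-stratum inductively. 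Both are valid; the paper's route is elementary and self-contained (it needs nothing beyond $|GL(n,\bZ/q)|$), while yours is more conceptual and would make the appearance of $\prod_{l=1}^{t_i}(1-p^{-2l})$ transparent as symplectic-group factors rather than something that emerges from telescoping. One caution about your sketch: the factor you call the Levi factor is not literally a subgroup of the stabilizer. For a single block $D=p^bJ$ in $GL(2t,\bZ/p^k)$ the full stabilizer is the preimage of $\mathrm{Sp}(2t,\bZ/p^a)$ under reduction mod $p^a$, not $\mathrm{Sp}(2t,\bZ/p^a)$ itself, so the congruence kernel of order $p^{4bt^2}$ must be absorbed into your ``unipotent factor'' along with the genuinely off-diagonal contributions. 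That is exactly the bookkeeping you flag, and it is where the exponent $\sum t_i a_i(s_{i-1}+s_i-1)$ has to emerge; in a sample single-block check it works out, but you would still need to carry out the general computation to produce a complete proof.
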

\begin{remark} The summand 1 in (\ref{Nnq}) accounts for the zero matrix $\mathbf{0}\in T(n,\bZ/q)$ in the definition of $N(n,q)$ in (\ref{Nnm}). This corresponds to the path connected component $B_n(U(m))_{\mathbf{0}}$ of $B_n(U(m))$ consisting of commuting matrices in Corollary \ref{cor:BnUgenD}.
\end{remark}
\begin{proof}
Let $G=GL(n,\bZ/q)$ and $G_{D_\alpha}$ be the stabilizer of $D_{\alpha}$. Then the number of matrices in $T(n,\bZ/q)$ congruent to $D_{\alpha}$ is equal to $|G|/|G_{D_\alpha}|$. Recall the well-known result that
\begin{equation}\label{GLnZqsize}
 |G|=q^{n^2}\prod_{l=1}^n\left(1-\tfrac{1}{p^l}\right).
\end{equation}
To count $|G_{D_\alpha}|$, consider the skew-symmetric bilinear form $\omega:(\bZ/q)^n\times (\bZ/q)^n\to\bZ/q$ associated to $D_{\alpha}=(d_{ij})$. Let $v_1,\ldots v_n$ be the columns of a matrix $A\in GL(n,\bZ/q)$. Then $A^TD_{\alpha}A=D_{\alpha}$ if and only if $\omega(v_i,v_j)=d_{ij}$ for all $1\leq i,j\leq n$. Hence, $|G_{D_{\alpha}}|$ is equal to the number of ordered bases $v_1,v_2,\ldots v_n$ of $(\bZ/q)^n$ satisfying $\omega(v_i,v_j)=d_{ij}$. We will count them by picking the basis vectors in the order $v_1,v_{t+1},v_2,v_{t+2},\ldots,v_t,v_{2t},v_{2t+1},v_{2t+2},\ldots,v_n$. To pick $v_1$, there are $\left(q^{2t_1}-(\tfrac{q}{p})^{2t_1}\right)q^{s_1}=q^n\left(1-\tfrac{1}{p^{2t_1}}\right)$ choices. Without loss of generality, we may assume $v_1=e_1$. Then it is obvious that there are $p^{b_1}q^{n-1}$ choices for $v_{t+1}$. Again we may assume $v_{t+1}=e_{t+1}$. Similar arguments show that there are $(p^{b_1})^2\left(q^{2t_1-2}-(\tfrac{q}{p})^{2t_1-2}\right)q^{s_1}=(p^{b_1})^2q^{n-2}\left(1-\tfrac{1}{p^{2t_1-2}}\right)$ choices for $v_2$, then $(p^{b_1})^3q^{n-3}$ choices for $v_{t+2}$ and so on. Hence, in total there are
%\begin{align*}
%&(p^{a_1})^{1+2+\ldots+(2t_1-1)}q^{n+(n-1)+\ldots (n-2t+1)}\prod_{i=1}^{t_1}\left(1-\tfrac{1}{p^{2i}}\right)\\
%=\,&p^{a_1t_1(2t_1-1)}q^{t_1(2n-2t_1+1)}\prod_{i=1}^{t_1}\left(1-\tfrac{1}{p^{2i}}\right)
%\end{align*}
\[
(p^{b_1})^{1+2+\ldots+(2t_1-1)}q^{n+(n-1)+\ldots (n-2t_1+1)}\prod_{l=1}^{t_1}\left(1-\tfrac{1}{p^{2l}}\right)
=(p^{b_1})^{t_1(2t_1-1)}q^{t_1(s_0+s_1+1)}\prod_{l=1}^{t_1}\left(1-\tfrac{1}{p^{2l}}\right)
\]
choices for the vectors $v_1,v_{t+1},\ldots, v_{t_1},v_{t+t_1}$.
Similarly, for the next $2t_2$ vectors $v_{t_1+1}$, $v_{t+t_1+1}$, $\ldots$, $v_{t_1+t_2}$, $v_{t+t_1+t_2}$,
there are
$(p^{b_1})^{4t_1t_2}(p^{b_2})^{t_2(2t_2-1)}q^{t_2(s_1+s_2+1)}\prod_{l=1}^{t_2}\left(1-\tfrac{1}{p^{2l}}\right)$
choices. Proceeding in this way, after the vectors $v_a,v_{t+a},1 \leq a\leq t_1+t_2+\ldots +t_{i-1}$ are chosen, there are
\begin{equation}\label{vichoices}
(p^{b_1})^{4t_1t_i}\ldots (p^{b_{i-1}})^{4t_{i-1}t_i} (p^{b_i})^{t_i(2t_i-1)}q^{t_i(s_{i-1}+s_i+1)}\prod_{l=1}^{t_i}\left(1-\tfrac{1}{p^{2l}}\right)
\end{equation}
choices for the next $2t_i$ vectors $v_a,v_{t+a}$, where $t_1+t_2+\ldots +t_{i-1}+1\leq a\leq t_1+t_2+\ldots +t_i$. Finally, after the first $2(t_1+\ldots+t_j)$ vectors are chosen, in total there are
\begin{equation}\label{remainchoices}
(p^{b_1})^{2t_1s_j}\ldots (p^{b_j})^{2t_js_j}q^{{s_j}^2}\prod_{l=1}^{s_j}\left(1-\tfrac{1}{p^l}\right)
\end{equation}
choices for the remaining $s_j$ vectors $v_{t_1+\ldots +t_j+1},\ldots,v_n$.
Hence, taking the product of (\ref{vichoices}) for $i=1,\ldots,j$ and (\ref{remainchoices}), we get
\begin{equation}\label{GDsize}
|G_{D_{\alpha}}|=q^e\left(\prod_{i=1}^j(p^{b_i})^{f_i}\right)
\left(\prod_{i=1}^j\prod_{l=1}^{t_i}\left(1-\tfrac{1}{p^{2l}}\right)\right)\left(\prod_{l=1}^{s_j}\left(1-\tfrac{1}{p^l}\right)\right)
\end{equation}
where $e=s_j^2+\sum_{i=1}^jt_i(s_{i-1}+s_i+1)$ and
\begin{align*}
 f_i&=t_i\left(2s_j+2t_i-1+4\textstyle\sum_{l=i+1}^jt_{l}\right)\\
&=t_i\left(2n-4\textstyle\sum_{l=1}^jt_l+2t_i-1+4\textstyle\sum_{l=i+1}^jt_{l}\right)\\
&=t_i\left(2n-1-2\textstyle\sum_{l=1}^{i-1}t_l-2\textstyle\sum_{l=1}^{i}t_l\right)\\
&=t_i(s_{i-1}+s_{i}-1).
\end{align*}
Note that
\begin{align*}
e+\textstyle\sum_{i=1}^j f_i
&=s_j^2+\textstyle\sum_{i=1}^jt_i(s_{i-1}+s_i+1)+\sum_{i=1}^j t_i(s_{i-1}+s_{i}-1)\\
&=s_j^2+\textstyle\sum_{i=1}^j2t_i(s_{i-1}+s_i)\\
&=s_j^2+\textstyle\sum_{i=1}^j(s_{i-1}-s_i)(s_{i-1}+s_i)\\
&=s_j^2+\textstyle\sum_{i=1}^j({s_{i-1}}^2-{s_i}^2)\\
&=s_0^2\\
&=n^2.
\end{align*}
Hence, dividing (\ref{GLnZqsize}) by (\ref{GDsize}), we get
\begin{align*}
N_p(\alpha)=\frac{|G|}{|G_{D_{\alpha}}|}
&=\frac{\left(\prod_{i=1}^jq^{ f_i}\right)\left(\prod_{l=s_j+1}^n\left(1-\frac{1}{p^l}\right)\right)}{\left(\prod_{i=1}^j(p^{b_i})^{f_i}\right)\left(\prod_{i=1}^j\prod_{l=1}^{t_i}\left(1-\frac{1}{p^{2l}}\right)\right)}\\
&=\frac{\left(\prod_{i=1}^j(p^{a_i})^{ f_i}\right)\left(\prod_{l=s_j+1}^n\left(1-\frac{1}{p^l}\right)\right)}{\prod_{i=1}^j\prod_{l=1}^{t_i}\left(1-\frac{1}{p^{2l}}\right)}.
\end{align*}
%***(There is a geometric way to realize the above computation)***

The last statement of the proposition follows easily from the fact that any non-zero $D\in T(n,\bZ/q)$ with $\sigma(D)$ divides $q$ is congruent to $D_{\alpha}$ for a unique $\alpha\in J(n,k)$.
\end{proof}

%***(Can things be realized as a generating function?)***

\begin{example}
Let $\alpha,\beta,\gamma$ be the partition $1,2,1+1$ respectively. For the partitions $\alpha,\beta$, $j=1,t_1=1,s_0=n$ and $s_1=n-2$.
\[
N_p(\alpha)=\frac{p^{2n-3}\left(1-\frac{1}{p^{n-1}}\right)\left(1-\frac{1}{p^n}\right)}{1-\frac{1}{p^2}}=\frac{(p^{n-1}-1)(p^n-1)}{p^2-1}
\]
\[
N_p(\beta)=\frac{p^{2(2n-3)}\left(1-\frac{1}{p^{n-1}}\right)\left(1-\frac{1}{p^n}\right)}{1-\frac{1}{p^2}}=\frac{p^{2n-3}(p^{n-1}-1)(p^n-1)}{p^2-1}
\]
For the partition $\gamma$, $j=1,t_1=2,a_1=1,s_0=n$ and $s_1=n-4$.
\[
N_p(\gamma)=\frac{p^2(p^{n-3}-1)(p^{n-2}-1)(p^{n-1}-1)(p^n-1)}{(p^2-1)(p^4-1)}
\]
Hence, for a prime number $p$,
\[N(n,p)=1+N_p(\alpha)=1+\frac{(p^{n-1}-1)(p^n-1)}{p^2-1}.\]
This agrees with \cite[Theorem 1]{ACG}. For $n\geq 4$,
\begin{align*}
N(n,p^2)&=1+N_p(\alpha)+N_p(\beta)+N_p(\gamma)\\
&=1+\frac{(p^{n-1}-1)(p^n-1)(p^{2n+1}-p^n-p^{n-1}+p^4+p^2-1)}{(p^2-1)(p^4-1)}.
\end{align*}
\end{example}

We will now look at $N(n,m)$ in the general case where $m$ is not necessarily a prime power. Let $m_1,m_2$ be relatively prime positive integers and $m=m_1m_2$. The inclusions
$\bZ/m_i\cong\bZ[\frac{1}{m_i}]/\bZ\subset\bZ[\frac{1}{m}]/\bZ\cong \bZ/m$ establish direct sum decompositions $\bZ/m_1\oplus \bZ/m_2 \cong \bZ/m$ and
\begin{equation}\label{Tdecomp}
T(n,\bZ/m_1)\oplus T(n,\bZ/m_2)\stackrel{\cong}{\longrightarrow}T(n,\bZ/m).
\end{equation}

\begin{proposition}
Let $m=m_1m_2$ with $m_1,m_2$ be relatively prime positive integers. Suppose $D_i\in T(n,\bZ/m _i)$ for $i=1,2$. Then $D_1+D_2\in T(n,\bZ/m)$ satisfies $\sigma(D_1+D_2)=\sigma(D_1)\sigma(D_2)$. Moreover, $N(n,m)=N(n,m_1)N(n,m_2)$.
\end{proposition}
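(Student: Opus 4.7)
My plan is to deduce both assertions from the direct sum decomposition (\ref{Tdecomp}) by analyzing what $\sigma$ does to it. First I would establish the multiplicativity $\sigma(D_1+D_2)=\sigma(D_1)\sigma(D_2)$, and then reduce the counting identity to the fact that the divisibility condition $\sigma(D)\mid m$ separates across the two factors.

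For the multiplicativity, the key observation is that under the identification $(\bZ/m)^n \cong (\bZ/m_1)^n \oplus (\bZ/m_2)^n$ coming from the Chinese Remainder Theorem, the $i$-th row of $D_1+D_2$ is the sum of the $i$-th row of $D_1$, which lies in the $(\bZ/m_1)^n$ summand, and the $i$-th row of $D_2$, which lies in the $(\bZ/m_2)^n$ summand. Choose integers $a,b$ with $am_2\equiv 1\pmod{m_1}$ and $bm_1\equiv 1\pmod{m_2}$; then multiplying a row of $D_1+D_2$ by $am_2$ recovers the corresponding row of $D_1$ (since $am_2$ annihilates the $(\bZ/m_2)^n$ summand), and similarly for $D_2$. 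It follows that $R(D_1+D_2) = R(D_1) \oplus R(D_2)$ inside $(\bZ/m)^n$, so $|R(D_1+D_2)| = |R(D_1)|\cdot|R(D_2)|$, and taking square roots gives $\sigma(D_1+D_2)=\sigma(D_1)\sigma(D_2)$.

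Next I would observe that since $R(D_i)$ is a subgroup of $(\bZ/m_i)^n$, its order divides $m_i^n$, so every prime dividing $\sigma(D_i)$ divides $m_i$. Because $\gcd(m_1,m_2)=1$, the integers $\sigma(D_1)$ and $\sigma(D_2)$ are coprime. Hence $\sigma(D_1)\sigma(D_2)$ divides $m=m_1m_2$ if and only if $\sigma(D_1)\mid m_1$ and $\sigma(D_2)\mid m_2$.

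For the counting statement, the decomposition (\ref{Tdecomp}) gives a bijection $T(n,\bZ/m)\leftrightarrow T(n,\bZ/m_1)\times T(n,\bZ/m_2)$, and by the previous paragraph together with the multiplicativity of $\sigma$, this bijection restricts to a bijection between $\{D\in T(n,\bZ/m):\sigma(D)\mid m\}$ and $\{D_1:\sigma(D_1)\mid m_1\}\times\{D_2:\sigma(D_2)\mid m_2\}$. Taking cardinalities yields $N(n,m)=N(n,m_1)N(n,m_2)$. The one step that needs a little care is the first one, where one must verify that the submodules $R(D_1)$ and $R(D_2)$ of $(\bZ/m)^n$ really do form an internal direct sum and that $R(D_1+D_2)$ equals their sum; everything else is formal.
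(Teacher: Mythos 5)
Your argument is correct and matches the paper's proof essentially step for step: both use the CRT splitting of $(\bZ/m)^n$, recover $R(D_1)$ and $R(D_2)$ from $R(D_1+D_2)$ by multiplying rows by the appropriate idempotents to get $R(D_1+D_2)=R(D_1)\oplus R(D_2)$, and then observe that the primes dividing $\sigma(D_i)$ divide $m_i$ so the divisibility condition separates across the two coprime factors. The only cosmetic difference is your labeling convention for the integers $a,b$.
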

\begin{proof}
We will show that $R(D_1)\oplus R(D_2)\cong R(D_1+D_2)$ as $\bZ$-submodules of $(\bZ/m_1)^n\oplus (\bZ/m_2)^n\cong (\bZ/m)^n$. For $1\leq j \leq n$, let $u_j\in(\bZ/m_1)^n,v_j\in(\bZ/m_2)^n$ be the $j$-th row of $D_1, D_2$ respectively. Then each row $u_i+v_i$ of $D_1+D_2$ is in $R(D_1)\oplus R(D_2)$ and hence $R(D_1+D_2) \subset R(D_1)\oplus R(D_2)$. For the other inclusion, pick integers $a,b$ so that $am_1\equiv 1\pmod{m_2}$ and $bm_2\equiv 1\pmod{m_1}$. Then $am_1(u_i+v_i)=v_i$ since $m_1u_i=m_2v_i=\vec{0}\in(\bZ/m)^n$. Similarly $bm_2(u_i+v_i)=u_i$. It follows that $R(D_1)\oplus R(D_2)\subset R(D_1+D_2)$. Hence $R(D_1)\oplus R(D_2)\cong R(D_1+D_2)$ and $\sigma(D_1+D_2)=\sigma(D_1)\sigma(D_2)$. To prove the last part of the proposition, note that since $R(D_i)\subset(\bZ/m_i)^n$, any prime divisor of $\sigma(D_i)$ divides $m_i$. Therefore, $\sigma(D_1+D_2)=\sigma(D_1)\sigma(D_2)$ divides $m=m_1m_2$ if and only if $\sigma(D_i)$ divides $m_i$ for each $i=1,2$. We conclude from (\ref{Tdecomp}) that $N(n,m)=N(n,m_1)N(n,m_2)$.
\end{proof}

\begin{corollary}
Let $p_1,\ldots,p_r$ be $r$ distinct prime numbers and $m=p_1^{k_1}p_2^{k_2}\ldots p_r^{k_r}$. Then $B_n(U(m))$ has
\[N(n,m)=\prod_{i=1}^r\left(1+\sum_{\alpha\in J(n,k_i)} N_{p_i}(\alpha)\right)\]
path-connected components.
\end{corollary}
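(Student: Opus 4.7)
The plan is to combine the two main ingredients already established: the multiplicativity statement $N(n,m_1m_2)=N(n,m_1)N(n,m_2)$ for coprime $m_1,m_2$ from the previous proposition, and the explicit prime-power formula $N(n,p^k)=1+\sum_{\alpha\in J(n,k)} N_p(\alpha)$ from \eqref{Nnq}. The corollary is essentially just a bookkeeping consequence of these.

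First I would recall that by Corollary \ref{cor:BnUgenD} the number of path-connected components of $B_n(U(m))$ equals $N(n,m)$ as defined in \eqref{Nnm}. Then I would argue by induction on $r$. The base case $r=1$ is exactly \eqref{Nnq}, applied to $q=p_1^{k_1}$. For the inductive step, write $m = (p_1^{k_1}\cdots p_{r-1}^{k_{r-1}})\cdot p_r^{k_r}$; since the two factors are coprime, the previous proposition yields
\[
N(n,m) = N\bigl(n, p_1^{k_1}\cdots p_{r-1}^{k_{r-1}}\bigr)\cdot N(n,p_r^{k_r}).
\]
Applying the inductive hypothesis to the first factor and \eqref{Nnq} to the second gives the claimed product formula.

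I do not expect any real obstacle here; the work has all been done in the preceding results. The only minor point worth spelling out is why the multiplicativity proposition iterates to arbitrarily many coprime factors, which follows immediately from the fact that $p_1^{k_1}\cdots p_{r-1}^{k_{r-1}}$ and $p_r^{k_r}$ are coprime whenever the $p_i$ are distinct primes. Thus the proof reduces to an induction of two or three lines, with all the substantive counting already packaged in the prime-power case and in the direct-sum decomposition \eqref{Tdecomp}.
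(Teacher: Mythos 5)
Your proposal is correct and matches the intended argument: the paper states this corollary with no separate proof precisely because it is an immediate consequence of the coprime multiplicativity $N(n,m_1m_2)=N(n,m_1)N(n,m_2)$ together with the prime-power formula (\ref{Nnq}), which is exactly the two-ingredient induction you carry out.
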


We would like to close this section by giving the map $\phi_D$ in (\ref{UZqH}) a geometric interpretation analogous to that of (\ref{phiabelian}) and a generalization to any $D\in\TnQZ$ such that $B_n(U(m))_D$ is non-empty. Define $B_D=B_n(U(\sigma(D)))$ for any $D\in\TnQZ$. Suppose that $D=D_n(d_1,\ldots,d_t)$. By Theorem \ref{thm:BnD}, $B_D\cong((U(\sigma(D))/\bT^1)\times \bT^n)/\bZ_D$. The domain of $\phi_D$ can thus be expressed in terms of $B_D$ as
\begin{align}
&\bigslant {\left[\bigslant{\left(U(m)/\bT^l\right)\times (\bT^n)^l}{(\bZ_D)^l}\right]}{\Sigma_l}\notag\\
\cong&\bigslant {\left[\bigslant{U(m)\times_{U(\sigma(D))^l}(U(\sigma(D))/\bT^1)^l\times (\bT^n)^l}{(\bZ_D)^l}\right]}{\Sigma_l}\notag\\
\cong&\bigslant{\left[U(m)\times_{U(\sigma(D))^l}\left(\bigslant{(U(\sigma(D))/\bT^1)\times \bT^n}{\bZ_D}\right)^l\right]}{\Sigma_l}\notag\\
\cong&\,\bigslant{U(m)\times_{U(\sigma(D))^l}B_D\,^l}{\Sigma_l}\label{domainphi}.
\end{align}
In this form, each point of the domain of $\phi_D$ is an unordered $l$-tuple of pairwisely orthogonal $\sigma(D)$-dimensional complex subspaces of $\bC^m$ with a $D$-commuting tuple of unitary automorphism defined on each of these subspaces. The map $\phi_D$ simply glues the automorphisms using direct sum to produce a $D$-commuting tuple of unitary matrices. With this interpretation one can readily generalize $\phi_D$ to any $D\in\TnQZ$ with $m=l\sigma(D)$ and obtain the following corollary of Theorem \ref{thm:BnD}.

\begin{corollary}\label{cor:BnDq}
Suppose that $D\in\TnQZ$ and $m=l\sigma(D)$. The map
\begin{equation}\label{phigeneralD}
\phi_D:\bigslant{U(m)\times_{U(\sigma(D))^l}B_D\,^l}{\Sigma_l}\to B_n(U(m))_D.
\end{equation}
induces an isomorphism in rational cohomology.
Moreover, $\phi_D$ induces a homeomorphism
$\bar{B}_n(U(m))_D\cong \bar{B}_n(\sigma(D))_D\,^l/\Sigma_l$
after passing to quotients by the action of $U(m)$.
\end{corollary}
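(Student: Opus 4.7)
The plan is to reduce this general statement to the special case $D = D_n(d_1,\ldots,d_t)$ already proved in Theorem \ref{thm:BnD}, and then verify that the geometric reformulation of $\phi_D$ given by \eqref{domainphi} agrees with the original map in \eqref{UZqH}. First I would invoke \cite[Proposition 4.1]{RV} to write $D = A^T D' A$ for some $A \in GL(n,\bZ)$ and some standard form $D' = D_n(d_1,\ldots,d_t)$. By Proposition \ref{congDhomeo}, the automorphism $\alpha:F_n\to F_n$ induced by $A$ gives a $U(m)$-equivariant homeomorphism $B_n(U(m))_D \cong B_n(U(m))_{D'}$, and restricting to $m=\sigma(D)$ also yields $B_D \cong B_{D'}$. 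By Lemma \ref{lem:sigmaD}(\ref{lem:sigmaDb}) we have $\sigma(D)=\sigma(D')$, so the integer $l$ is unchanged, and the identification intertwines the action maps. Hence it suffices to prove the corollary for $D = D_n(d_1,\ldots,d_t)$.

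Next I would verify that the chain of homeomorphisms displayed in \eqref{domainphi} identifies the domain of \eqref{phigeneralD} with the domain of \eqref{UZqH} in a way that makes the two maps $\phi_D$ agree. The key observation is that Theorem \ref{thm:BnD} applied with $m$ replaced by $\sigma(D)$ (and $l=1$) gives $B_D \cong (U(\sigma(D))/\bT^1)\times \bT^n / \bZ_D$, so that gluing together $l$ copies of $B_D$ via the $U(m)\times_{U(\sigma(D))^l}(\cdot)$ construction rebuilds the factor $(U(m)/\bT^l)\times (\bT^n)^l/(\bZ_D)^l$. The $\Sigma_l$ action on both sides permutes the $l$ copies, and under these identifications $\phi_D$ of \eqref{phigeneralD} is precisely the $\phi_D$ of \eqref{UZqH}, since both send a decorated orthogonal splitting of $\bC^m$ to the corresponding direct sum of $D$-commuting tuples. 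Therefore the rational cohomology equivalence statement is immediate from Theorem \ref{thm:BnD}.

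For the orbit-space claim, after quotienting by $U(m)$ Theorem \ref{thm:BnD} gives
\[
\bar{B}_n(U(m))_{D'} \;\cong\; (\bT^n)^l/\Sigma_l.
\]
Applying the same theorem with $l=1$ yields $\bar{B}_n(\sigma(D))_{D'} \cong \bT^n$, so $\bar{B}_n(\sigma(D))_{D'}^{\,l}/\Sigma_l \cong (\bT^n)^l/\Sigma_l$, which matches. Together with the equivariant homeomorphism from Step 1 (which descends to the $U(m)$-quotients), we get $\bar{B}_n(U(m))_D \cong \bar{B}_n(\sigma(D))_D^{\,l}/\Sigma_l$ as required.

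The main point to be careful about is not a hard obstacle but a bookkeeping issue: one must check that the stepwise homeomorphism \eqref{domainphi} is $U(m)$-equivariant and commutes with $\Sigma_l$, so that the reformulated $\phi_D$ really is the map coming from the conjugation action in the original proof. Once this compatibility is clear, no new analysis is needed beyond citing Theorem \ref{thm:BnD} and Proposition \ref{congDhomeo}.
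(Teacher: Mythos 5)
Your proof matches the paper's own argument: reduce via \cite[Proposition 4.1]{RV} and Proposition~\ref{congDhomeo} to the standard form $D' = D_n(d_1,\ldots,d_t)$, use the homeomorphisms $B_n(U(m))_D \cong B_n(U(m))_{D'}$ and $B_D \cong B_{D'}$ to get a commutative square comparing $\phi_D$ and $\phi_{D'}$, and then import the conclusions of Theorem~\ref{thm:BnD} via the identification \eqref{domainphi}. Your added care about $U(m)$-equivariance and the orbit-space identification is consistent with, and only slightly more explicit than, what the paper records.
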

\begin{proof}
Suppose $D\in\TnQZ$. By Proposition \ref{congDhomeo} and \cite[Proposition 4.1]{RV}, we can pick an automorphism of $F_n$ which gives canonical homeomorphisms $B_n(U(m)_D\cong B_n(U(m)_{D'}$ and $B_D\cong B_{D'}$ for some $D'=D_n(d_1,\ldots,d_t)$. These homeomorphisms induce the vertical homeomorphisms in the commutative diagram
\[
\xymatrix{
\bigslant{U(m)\times_{U(\sigma(D))^l}B_D\,^l}{\Sigma_l}\ar[r]^(.64){\phi_D}\ar[d]^{\cong}& B_n(U(m))_D\ar[d]^{\cong}\,\\
\bigslant{U(m)\times_{U(\sigma(D'))^l}B_{D'}\,^l}{\Sigma_l} \ar[r]^(.64){\phi_{D'}} &B_n(U(m))_{D'}.
}
\]
Hence, $\phi_D$ satisfies the properties stated in the corollary because $\phi_{D'}$ does by (\ref{domainphi}) and Theorem \ref{thm:BnD}. This proves the corollary.
\end{proof}

\section{The rank one case}

Let $\Gamma$ be the central extension
\begin{equation}\label{gammacentralext}
1\to\bZ^r\to\Gamma\to \bZ^n\to 1.
\end{equation}
with $k$-invariant $\omega=(\omega_1,\dots, \omega_r)\in H^2(\mathbb{Z}^n; \mathbb{Z}^r)
\cong \left( H^2(\mathbb{Z}^n;\mathbb{Z})\right)^r,$
where
\begin{equation}\label{gammalcentralext}
\omega_l = \sum_{1\leq i < j\leq n } \omega_{i,j}^le^*_i\wedge
e^*_j \in H^2(\mathbb{Z}^n; \mathbb{Z}).
\end{equation}
In this description,  $\{e_i\}_{i=1}^n$ are the standard generators of $\mathbb{Z}^n$,
$\{e_i^*\}_{i=1}^n$ are the dual generators of the cohomology ring
$H^*(\mathbb{Z}^n;\mathbb{Z})$ and $\omega_{i,j}^l\in \mathbb{Z}$.
The group $\Gamma$ corresponding to this $k$-invariant
$\omega$ is given in terms of generators and relations by
\[\Gamma=\langle a_1, \dots, a_n, x_1, \dots, x_r :
[a_i,a_j]=\prod_{l=1}^rx_l^{\omega_{i,j}^l}, x_i \text{ is central}\rangle.\]
A group homomorphism from $\Gamma$ to $U(m)$ is determined by the images of the generators of $\Gamma$. By abuse of notation, we sometimes write $(X_1,\ldots,X_r,A_1,\ldots,A_n)\in Hom(\Gamma,U(m))$ to represent the homomorphism $f:\Gamma\to U(m)$ with $f(a_i)=A_i$ and $f(x_j)=X_j$ for $1\leq i\leq n, 1\leq j\leq r$.

For the rest of the paper, we will study the space $\Hom(\Gamma,U(m))$. In this section, our focus will be on the special case of rank $r=1$. The first step to analyze this space is to simplify the expression of the $k$-invariant $\omega\in H^2(\mathbb{Z}^n; \mathbb{Z})$ of the central extension. The following proposition allows us to work with central extensions with a particularly simple form of $k$-invariants. The proof is exactly analogous to that of \cite[Proposition 4.1]{RV} for the case of $(\bZ/m)$-valued skew-symmetric forms on $\bZ^n$ and we will leave it to the readers.

\begin{proposition}\label{skew}
Let $\omega:\bZ^n\times\bZ^n\to\bZ$ be a skew-symmetric bilinear form on $\bZ^n$. Then there exist $t\leq n/2$ and a $\bZ$-module basis $e_1,e_2,\ldots,e_n$ of $\bZ^n$ such that

\[\omega=c_1e_1^{\ast}\wedge e_{t+1}^{\ast}+\ldots+c_t e_t^{\ast}\wedge e_{2t}^{\ast},\]
where $c_1,\ldots, c_t$ are positive integers with $c_i|c_{i+1}$ for $i=1,\ldots, t-1$.
\end{proposition}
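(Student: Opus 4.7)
The plan is to prove this by induction on $n$, adapting the classical Smith-normal-form argument for alternating matrices over a principal ideal domain. If $\omega = 0$ (in particular when $n \le 1$) set $t = 0$, so assume $n \ge 2$ and $\omega \neq 0$.

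First I set $c_1 = \min\{|\omega(u,v)| : u, v \in \bZ^n,\; \omega(u,v) \neq 0\}$, a positive integer by well-ordering, and pick $u_1, v_1 \in \bZ^n$ with $\omega(u_1, v_1) = c_1$ (flipping a sign if needed). The first key step is the divisibility claim that $c_1 \mid \omega(u_1, w)$ and $c_1 \mid \omega(v_1, w)$ for every $w \in \bZ^n$: writing $\omega(u_1, w) = qc_1 + r$ with $0 \le r < c_1$, the element $\omega(u_1,\, w - qv_1) = r$ must vanish by minimality of $c_1$, and the argument for $v_1$ is symmetric. This makes the map $f : \bZ^n \to \bZ^2$ defined by $f(w) = \bigl(\omega(u_1,w)/c_1,\; \omega(v_1,w)/c_1\bigr)$ well-defined; since $f(v_1) = (1,0)$ and $f(u_1) = (0,-1)$, it is surjective and restricts to an isomorphism on $\bZ u_1 \oplus \bZ v_1$, yielding the splitting $\bZ^n = \bZ u_1 \oplus \bZ v_1 \oplus V$ with $V := \ker f$ a free $\bZ$-module of rank $n-2$.

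Next I apply the induction hypothesis to $\omega|_V$, obtaining a basis of $V$ and positive integers $c_2 \mid c_3 \mid \cdots \mid c_{t}$ (with $t-1 \le (n-2)/2$, hence $t \le n/2$) that put $\omega|_V$ into standard form. Setting $e_1 = u_1$, $e_{t+1} = v_1$, and filling the remaining slots $e_2,\ldots,e_t$ and $e_{t+2},\ldots,e_n$ from the inductive basis of $V$ in the obvious order, one obtains the claimed expression for $\omega$ on $\bZ^n$, provided the divisibility $c_1 \mid c_2$ is verified.

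The main obstacle is precisely this last divisibility. The trick is to exhibit a value of $\omega$ equal to the remainder: writing $c_2 = qc_1 + r$ with $0 \le r < c_1$ and using that $\bZ u_1 \oplus \bZ v_1$ is $\omega$-orthogonal to $V$ (which is the very content of $V = \ker f$), one computes
\[
\omega\bigl(u_1 + e_2,\; -q v_1 + e_{t+2}\bigr) \;=\; -qc_1 + c_2 \;=\; r.
\]
If $r > 0$ this contradicts the minimality of $c_1$, forcing $r = 0$ and hence $c_1 \mid c_2$. This closes the induction and completes the proof.
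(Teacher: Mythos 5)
Your proof is correct and follows the standard Smith-normal-form induction for alternating forms over a PID, which is precisely the argument from Reichstein--Vonessen that the paper cites as the analogue without reproducing it. The splitting $\bZ^n = \bZ u_1 \oplus \bZ v_1 \oplus \ker f$ and the closing divisibility step forcing $c_1 \mid c_2$ by evaluating $\omega(u_1+e_2,\,-qv_1+e_{t+2})$ against minimality are exactly the expected moves.
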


Thanks to Proposition \ref{skew}, we can assume throughout this section that $\Gamma$ is the central extension
\begin{equation}\label{Gammak}
 1\to \bZ\to \Gamma \to \bZ^n\to 1
\end{equation}
with $k$-invariant $\omega=\sum_{i=1}^t c_ie_i^{\ast}\wedge e_{t+i}^{\ast}$, where $0\leq 2t\leq n$ and $c_i>0 $ for $1\leq i \leq t$. In this case, an $(n+1)$--tuple of $m\times m$ unitary matrices $(X,A_1,\ldots,A_n)\in\Hom(\Gamma,U(m))$ if and only if for $1\leq i\leq i'\leq n$,
\begin{align}
&[X,A_i]=I_m;\label{XAcomm} \\
&[A_i,A_{i'}]=
\begin{cases}
X^{c_i} &\mbox{if } i'=i+t,1\leq i \leq t; \\
I_m &\mbox{otherwise.}   \\
\end{cases}\label{AAcomm}
\end{align}

\begin{lemma}\label{Xeigen}
Let $1\to\bZ\to \Gamma \to \bZ^n\to 1$ be a central extension with non-zero $k$-invariant $\omega$. Then there exists a positive integer $L$ such that for any $(X,A_1,\ldots,A_n)\in \text{Hom}(\Gamma,U(m))$, each eigenvalue of $X$ is a $L$-th root of unity.
\end{lemma}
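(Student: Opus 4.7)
The plan is to exploit the single relation $[A_1, A_{1+t}] = X^{c_1}$ guaranteed by the hypothesis $\omega \neq 0$: non-triviality of $\omega$ forces $t \geq 1$ with at least one positive $c_i$, and (after reindexing the $c_i$ if necessary) we may take this to be $c_1$. The crucial structural observation is that $X$ is central in $\Gamma$, so relation (\ref{XAcomm}) gives $[X, A_i] = I_m$ for every $i$; in particular $A_1$ and $A_{1+t}$ commute with $X$ and therefore preserve its eigenspace decomposition.

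First I would diagonalize $X$ (conjugation does not affect the conclusion) and write $\bC^m = \bigoplus_\mu E_\mu(X)$, where $\mu$ runs over the distinct eigenvalues with multiplicities $m_\mu = \dim E_\mu(X) \leq m$. Each $A_j$ restricts to a unitary operator $A_j^\mu$ on $E_\mu(X) \cong \bC^{m_\mu}$, and on $E_\mu(X)$ the matrix $X^{c_1}$ acts as the scalar $\mu^{c_1} I_{m_\mu}$. Restricting relation (\ref{AAcomm}) to $E_\mu(X)$ therefore yields $[A_1^\mu, A_{1+t}^\mu] = \mu^{c_1} I_{m_\mu}$.

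At this point Lemma \ref{ACeigen} applies directly to the pair $(A_1^\mu, A_{1+t}^\mu) \in U(m_\mu)$ and gives $(\mu^{c_1})^{m_\mu} = 1$, so $\mu$ is a root of unity whose order divides $c_1 m_\mu$. To obtain a bound independent of the particular homomorphism I would simply take $L = c_1 \cdot \operatorname{lcm}(1, 2, \ldots, m)$; since every possible multiplicity $m_\mu$ lies between $1$ and $m$, this $L$ is a common multiple of all the orders $c_1 m_\mu$ that can occur, and so $\mu^L = 1$ for every eigenvalue $\mu$ of $X$ in every representation. There is no serious obstacle here beyond noticing that the centrality of $X$ lets one reduce to Lemma \ref{ACeigen} eigenspace by eigenspace; the remaining issue of absorbing the variable multiplicity $m_\mu$ into a uniform $L$ is handled by the lcm.
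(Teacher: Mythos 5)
Your proof is correct and follows essentially the same route as the paper's: restrict the commutator relation to each eigenspace of $X$, take determinants (which is exactly the content of Lemma \ref{ACeigen}), and absorb the variable multiplicities into a uniform $L$. The only cosmetic differences are that you cite Lemma \ref{ACeigen} rather than redo the determinant computation inline, and you choose $L=c_1\operatorname{lcm}(1,\ldots,m)$ where the paper takes the coarser $L=m!\,\omega_{ij}$; both work.
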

\begin{proof}
Since $\omega\neq 0$, $[A_i,A_j]=X^{\omega_{ij}}$ with $\omega_{ij}> 0$ for some $1\leq i,j\leq n$. Suppose that $\lambda$ is an eigenvalue of $X$ and the associated eigenspace $E_{\lambda}$ is $k$ dimensional. Since $A_i,A_j$ commutes with $X$, they restrict to unitary automorphisms on $E_{\lambda}$. $[A_i|_{E_{\lambda}},A_j|_{E_{\lambda}}]=X^{\omega_{ij}}|_{E_{\lambda}}=\lambda^{\omega_{ij}}Id_{E_{\lambda}}$. By taking determinant, we have $\lambda^{k\omega_{ij}}=1$. Therefore, we can take $L=m!\,\omega_{ij}$.
\end{proof}

Consider the map $\chi:U(m)\to \bC[z]$ which sends a unitary matrix to its characteristic polynomial and the restriction map $\res:\Hom(\Gamma,U(m))\to \Hom(\bZ,U(m))\cong U(m)$ which sends $(X,A_1,
\ldots,A_n)$ to $X$. For a complex polynomial $p(z)$, define
$U(m)_{p(z)}=\chi^{-1}(p(z))$, and
$\Hom(\Gamma,U(m))_{p(z)}=(\chi\circ res)^{-1}(p(z))$.
The map $\res$ restricts to a map
\[
\res_{p(z)}:\Hom(\Gamma,U(m))_{p(z)}\to U(m)_{p(z)}.
\]
Note that if $\Gamma$ is non-abelian, the range of the map $\chi\circ \res$ is a discrete set by Lemma \ref{Xeigen}. Hence, by continuity, each $\Hom(\Gamma,U(m))_{p(z)}$ is both an open and closed subset of $\Hom(\Gamma,U(m))$.

Let $p(z)=\prod_{j=1}^s(z-\lambda_j)^{m_j}\in\bC[z]$ with distinct roots $\lambda_1,\ldots,\lambda_s\in\bS^1$. Let $X\in U(m)_{p(z)}$ and $(X,A_1,\ldots,A_n)\in (\res_{p(z)})^{-1}(X)$. There is an orthogonal decomposition
\[
\bC^m=E_{\lambda_1}\oplus E_{\lambda_2} \oplus \ldots \oplus E_{\lambda_s}
\]
of $\bC^m$ into eigenspaces of $X$. $X$ restricts to complex multiplication by $\lambda_j$ on $E_{\lambda_j}$ and $X|_{E_{\lambda_j}}$ has characteristic polynomial $(z-\lambda_j)^{m_j}$. Since $A_i$ commutes with $X$, $A_i$ also preserves each of these eigenspaces ${E_{\lambda_j}}$. All these $n+1$ matrices $X, A_1,\ldots, A_n$ restrict to unitary automorphisms on the eigenspace $E_{\lambda_j}$ and their restrictions on $E_{\lambda_j}$ satisfy commutation relations similar to those in (\ref{XAcomm}) and (\ref{AAcomm}). Hence, for $1\leq i\leq i' \leq n$ and $1\leq j \leq s$,
\begin{align}
&X|_{E_{\lambda_j}}= \lambda_jId_{E_{\lambda_j}} \text{ is central;}\label{XjAcomm}\\
&[A_i|_{E_{\lambda_j}},A_{i'}|_{E_{\lambda_j}}]=
\begin{cases}
\lambda_j^{c_i}Id_{E_{\lambda_j}} &\mbox{if } i'=i+t,1\leq i \leq t; \\
Id_{E_{\lambda_j}} &\mbox{otherwise.}
\end{cases} \label{AAjcomm}
\end{align}
Under an unitary isomorphism $E_{\lambda_j}\cong \bC^{m_j}$, these relations between the restrictions of $X,A_1,\ldots A_n$ on $E_{\lambda_j}$ are the same as those among an $(n+1)$-tuple in $\Hom(\Gamma,U(m_j))_{(z-\lambda_j)^{m_j}}$. Since $A_1,\ldots,A_n$ are uniquely determined by their restrictions on the eigenspaces $E_{\lambda_j}$, this implies
\begin{equation}\label{respzinv}
(\res_{p(z)})^{-1}(X)\cong\prod_{j=1}^s{\Hom(\Gamma,U(m_j))_{(z-\lambda_j)^{m_j}}}.
\end{equation}

For instance, if we take $X$ to be the matrix
\begin{equation}\label{X0}
X_0=
  \begin{bmatrix}
    \lambda_1I_{m_1} & 0 & \ldots &  0\\
    0 & \lambda_2I_{m_2} & \ldots  & 0\\
    \vdots & \vdots & \ddots &  \vdots\\
    0 & 0 & \ldots &  \lambda_jI_{m_j}\\
  \end{bmatrix}\in U(m)_{p(z)},
\end{equation}
then $(\res_{p(z)})^{-1}(X_0)$ consists of all $(n+1)$-tuples $(X_0,A_1,\ldots,A_n)$ such that each $A_i$ has the form
\[
A_i=
  \begin{bmatrix}
    A_{i1} & 0 & \ldots &  0\\
    0 & A_{i2} & \ldots  & 0\\
    \vdots & \vdots & \ddots &  \vdots\\
    0 & 0 & \ldots &  A_{is}\\
  \end{bmatrix}\in U(m)
\]
where $(\lambda_jI_{m_j},A_{1j},A_{2j},\ldots,A_{nj})\in \Hom(\Gamma,U(m_j))_{(z-\lambda_j)^{m_j}}$ for each $j$.

Our next theorem says that $\res_{p(z)}:\Hom(\Gamma,U(m))_{p(z)}\to U(m)_{p(z)}$ is a fiber bundle with fiber $\res^{-1}(X_0)$ homeomorphic to product of spaces of almost commuting tuples of unitary matrices.

\begin{theorem}\label{thmfib}
Let $\Gamma$ be a central extension as in (\ref{Gammak}) and $p(z)=\prod_{j=1}^s(z-\lambda_j)^{m_j}$, where $\lambda_1,\ldots,\lambda_s\in\bS^1$ are its distinct roots. Then there are $U(m)$-equivariant homeomorphisms
\begin{align*}
\Hom(\Gamma,U(m))_{p(z)}
&\cong U(m)\times_{\prod_{j=1}^sU(m_j)}\left(\prod_{j=1}^s\Hom(\Gamma,U(m_j))_{(z-\lambda_j)^{m_j}}\right)\\
&\cong U(m)\times_{\prod_{j=1}^sU(m_j)}\left(\prod_{j=1}^sB_n(U(m_j))_{D_n(-c_1q_j,\ldots,-c_tq_j)}\right)
\end{align*}
where $q_j=\frac{1}{2\pi\sqrt{-1}}\log{\lambda_j}$. Also, $U(m)_{p(z)}\cong U(m)/(\textstyle\prod_{j=1}^sU(m_j))$ is a flag manifold and $\res_{p(z)}:\Hom(\Gamma,U(m))_{p(z)}\to U(m)_{p(z)}$ is a fiber bundle induced by the collapse map of $\prod_{j=1}^s\Hom(\Gamma,U(m_j))_{(z-\lambda_j)^{m_j}}$.
\end{theorem}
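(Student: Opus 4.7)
The strategy is to realise $\Hom(\Gamma,U(m))_{p(z)}$ as the saturation under $U(m)$-conjugation of the single fibre $\res_{p(z)}^{-1}(X_0)$ over the block-diagonal matrix $X_0$ of (\ref{X0}), and to organise this saturation as an associated fibre bundle over $U(m)_{p(z)}$. First I would identify the base: since the characteristic polynomial is conjugation-invariant, $U(m)$ acts on $U(m)_{p(z)}$ by conjugation, and every unitary matrix in $U(m)_{p(z)}$ is diagonalisable with the prescribed eigenvalue multiplicities and hence conjugate to $X_0$. The centraliser of $X_0$ in $U(m)$ is the block-diagonal subgroup $H:=\prod_{j=1}^s U(m_j)$, whence $U(m)_{p(z)}\cong U(m)/H$ is the flag manifold of type $(m_1,\ldots,m_s)$.

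Next I would consider the $U(m)$-equivariant conjugation map $\Psi:U(m)\times \res_{p(z)}^{-1}(X_0)\to\Hom(\Gamma,U(m))_{p(z)}$ sending $(M,(X_0,A_1,\ldots,A_n))$ to $(MX_0M^{-1},MA_1M^{-1},\ldots,MA_nM^{-1})$. This map is continuous and surjective by the previous paragraph. Conjugation by any $h\in H$ fixes $X_0$ and preserves the relations (\ref{XAcomm})-(\ref{AAcomm}), so $H$ acts on $\res_{p(z)}^{-1}(X_0)$, and the identity $\Psi(Mh,f)=\Psi(M,hfh^{-1})$ forces $\Psi$ to descend to a continuous $U(m)$-equivariant bijection $\bar{\Psi}:U(m)\times_H \res_{p(z)}^{-1}(X_0)\to\Hom(\Gamma,U(m))_{p(z)}$, where injectivity uses that the centraliser of $X_0$ equals precisely $H$. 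Since $\res_{p(z)}^{-1}(X_0)$ is a closed subspace of the compact space $U(m)^n$, the domain is compact and the target is Hausdorff, so $\bar{\Psi}$ is a homeomorphism. Combined with the canonical identification (\ref{respzinv}) of $\res_{p(z)}^{-1}(X_0)$ with $\prod_j \Hom(\Gamma,U(m_j))_{(z-\lambda_j)^{m_j}}$, this yields the first asserted homeomorphism.

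For the second homeomorphism I would identify each factor $\Hom(\Gamma,U(m_j))_{(z-\lambda_j)^{m_j}}$ with $B_n(U(m_j))_{D_n(-c_1q_j,\ldots,-c_tq_j)}$. Any element of the former has $X=\lambda_jI_{m_j}$ forced by the characteristic polynomial, hence is specified by a tuple $(A_1,\ldots,A_n)\in U(m_j)^n$ subject only to $[A_i,A_{i+t}]=\lambda_j^{c_i}I_{m_j}$ for $1\leq i\leq t$ with all other commutators trivial. Writing $\lambda_j^{c_i}=e^{2\pi\sqrt{-1}\,c_iq_j}$ and comparing with the definition of $D_n$, whose $(k,k+t)$-entry equals $c_kq_j$, these are exactly the defining relations of $B_n(U(m_j))_{D_n(-c_1q_j,\ldots,-c_tq_j)}$. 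Finally, the standard projection $U(m)\times_H F\to U(m)/H$ is a locally trivial fibre bundle with fibre $F$, and under the identifications of the previous paragraphs it coincides with $\res_{p(z)}$, giving the fibre bundle structure.

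The chief subtlety is the bijectivity of $\bar{\Psi}$, which is controlled entirely by the elementary centraliser computation that the matrices commuting with $X_0$ are exactly the block-diagonal ones; once that and the closedness of $\res_{p(z)}^{-1}(X_0)$ in $U(m)^n$ are in hand, the compactness argument delivers the homeomorphism without any explicit construction of local trivialisations.
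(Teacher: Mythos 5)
Your proof is correct and follows essentially the same route as the paper: the paper invokes Bredon's Proposition 2.3.2 (the paper's Lemma \ref{lem:fiberbundle}) applied to the $U(m)$-equivariant map $\res_{p(z)}$ with transitive target $U(m)_{p(z)}\cong U(m)/\prod_j U(m_j)$ and fiber $\res_{p(z)}^{-1}(X_0)$ identified via (\ref{respzinv}), whereas you unwind that lemma by constructing the map $\Psi$ explicitly and proving $\bar\Psi$ is a homeomorphism by the compact-to-Hausdorff argument. The centralizer computation and the sign-matching for the identification of the fiber with $B_n(U(m_j))_{D_n(-c_1q_j,\ldots,-c_tq_j)}$ are both carried out correctly.
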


The proof of Theorem \ref{thmfib} makes use of the following lemma in \cite[Proposition 2.3.2]{Bredon}.

\begin{lemma}\label{lem:fiberbundle}
Let $G$ be a compact Lie group and $f:Y\to Z$ be a $G$-map between compact $G$-spaces. Suppose that the $G$-action on $Z$ is transitive with stabilizer subgroup $H\subset G$ at a point $z_0\in Z$. Then $Y_0=f^{-1}(z_0)$ is an $H$-space, $Y\cong G\times_HY_0$ as $G$-spaces and $f$ is the fiber bundle $G\times_HY_0\to G/H\cong Z$ induced by the collapse map of $Y_0$.
\end{lemma}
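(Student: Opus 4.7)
The plan is to prove this as the standard \emph{slice-of-a-transitive-action} argument from the theory of compact transformation groups: build the canonical $G$-equivariant map $G \times_H Y_0 \to Y$, show it is a continuous bijection, and upgrade it to a homeomorphism using compactness. The fiber bundle conclusion then follows formally because $G \to G/H$ is a principal $H$-bundle. First I would identify the base: since $G$ acts transitively on $Z$ with stabilizer $H$ at $z_0$, the orbit map $g \mapsto g z_0$ descends to a continuous $G$-equivariant bijection $\bar{\alpha}: G/H \to Z$; with $G/H$ compact and $Z$ Hausdorff, $\bar{\alpha}$ is a homeomorphism. Next, $Y_0 := f^{-1}(z_0)$ is $H$-invariant by $G$-equivariance of $f$: for $h \in H$ and $y \in Y_0$, $f(hy) = h f(y) = h z_0 = z_0$, so $hy \in Y_0$; thus $Y_0$ is a compact $H$-space.

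Next I would construct the canonical map. Define $\Phi: G \times Y_0 \to Y$ by $\Phi(g, y) = g \cdot y$; this is continuous, and for $h \in H$ we have $\Phi(gh, h^{-1}y) = g h h^{-1} y = \Phi(g, y)$, so $\Phi$ factors through a continuous $G$-equivariant map $\tilde{\Phi}: G \times_H Y_0 \to Y$, where $G$ acts on the domain by left multiplication on the first factor. Surjectivity of $\tilde{\Phi}$: for any $y \in Y$, transitivity of the $G$-action on $Z$ supplies $g \in G$ with $g z_0 = f(y)$, whence $f(g^{-1} y) = z_0$, so $g^{-1} y \in Y_0$ and $y = \tilde{\Phi}[g, g^{-1} y]$. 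Injectivity: if $g_1 y_1 = g_2 y_2$ with $y_i \in Y_0$, applying $f$ gives $g_1 z_0 = g_2 z_0$, so $h := g_2^{-1} g_1 \in H$; then $y_2 = h y_1$, and $[g_1, y_1] = [g_2 h, y_1] = [g_2, h y_1] = [g_2, y_2]$ under the equivalence relation defining $G \times_H Y_0$.

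To conclude, $G \times_H Y_0$ is compact as a continuous image of the compact space $G \times Y_0$, and $Y$ is Hausdorff, so the continuous bijection $\tilde{\Phi}$ is a homeomorphism. For the fiber bundle assertion, the quotient $G \to G/H$ is a principal $H$-bundle because $H$ is a closed subgroup of the compact Lie group $G$; the collapse map $Y_0 \to \{\text{pt}\}$ is $H$-equivariant and induces the associated fiber-bundle projection $G \times_H Y_0 \to G \times_H \{\text{pt}\} = G/H$. Transporting this along $\tilde{\Phi}$ and $\bar{\alpha}$ realizes $f: Y \to Z$ as the promised fiber bundle. The only subtle point is correctly matching the equivalence relation $(g, y) \sim (gh, h^{-1} y)$ when proving injectivity; no deeper obstacle arises, which is precisely why compactness is built into the hypotheses — without it, upgrading $\tilde{\Phi}$ to a homeomorphism would require a slice argument rather than the elementary compact-to-Hausdorff principle.
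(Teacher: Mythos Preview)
Your proof is correct and complete. Note, however, that the paper does not actually supply a proof of this lemma: it is stated with a citation to \cite[Proposition 2.3.2]{Bredon} and used as a black box. What you have written is precisely the standard slice argument underlying that reference, so there is nothing to compare beyond observing that you have filled in the details the paper chose to outsource.
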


\begin{proof}[of Theorem~{\rm\ref{thmfib}}]
It is clear that $\res_{p(z)}:\Hom(\Gamma,U(m))_{p(z)}\to U(m)_{p(z)}$ is equivariant with respect to the conjugation action of $U(m)$. Also, note that this conjugation action on $U(m)_{p(z)}$ is transitive with the stabilizer subgroup of $X_0$ in (\ref{X0}) equal to $\prod_{j=1}^sU(m_j)$. By Lemma \ref{lem:fiberbundle} and (\ref{respzinv}), we obtain the first homeomorphism and prove that $\res_{p(z)}$ is induced by the collapse map of $\prod_{j=1}^s\Hom(\Gamma,U(m_j))_{(z-\lambda_j)^{m_j}}$. The second homeomorphism follows immediately from (\ref{XjAcomm}), (\ref{AAjcomm}) and the definition of $B_n(U(m_j))_{D_n(-c_1q_j,\ldots,-c_tq_j)}$.
\end{proof}

%\begin{theorem}[Just a theorem in section 3 but in another form. To be deleted or moved to introduction.]
%Let $\Gamma$ be as in (\ref{Gammak}). If $m=l\times \prod{k^{\lambda^{c_i}}}$, then there is a rational equivalence
%\begin{equation*}
%\bigslant{\left[\bigslant{\left(U(m)/\bT^l\right)\times (\bT^n)^l}{(\prod_{i=1}^t\bZ/k_{\lambda^{c_i}})^l}\right]}
%{\Sigma_l}\to \Hom(\Gamma,U(m))_{(z-\lambda_i)^m}.
%\end{equation*}
%If $K=m$, then the map is a homeomorphism.
%\end{theorem}

Finally, using results from the last section, we can write down the image of the map $\res$ and the connected components of $\Hom(\Gamma,U(m))$. To do so, we need to introduce the following definition.

\begin{definition}\label{def:goodpoly}
Let $\Gamma$ be a central extension as in (\ref{Gammak}) with non-zero $k$-invariant. Let $\bC[z]^m_{\Gamma}\subset\bC[z]$ be the set of all monic complex polynomials $p(z)$ of degree $m$ such that
\begin{enumerate}
\item all the roots of $p(z)$ are roots of unity;
\item If a root $\lambda$ of $p(z)$ is a primitive $k$-th root of unity, then the multiplicity of $\lambda$ in $p(z)$ is divisible by $\mu_k:=\prod_{i=1}^t k/(k,c_i)$, where $(k,c_i)$ denotes the greatest common divisor of $k$ and $c_i$.
\end{enumerate}
\end{definition}

\begin{corollary}\label{cor:noofcomprank1}
Suppose that $\Gamma$ is a central extension as in (\ref{Gammak}) and $\res:\Hom(\Gamma,U(m))\to \Hom(\bZ,U(m))\cong U(m)$ is the restriction map. If $\Gamma$ is abelian, then $\res$ is surjective. If $\Gamma$ is non-abelian, then the image of $\res$ is $\coprod_{p(z)\in\bC[z]^m_{\Gamma}\\}U(m)_{p(z)}$ and $\Hom(\Gamma,U(m))$ can be expressed as the union of its path-connected components as
\begin{equation}\label{homrank1disju}
\Hom(\Gamma,U(m))=\coprod_{p(z)\in\bC[z]^m_{\Gamma}}\text{Hom}(\Gamma,U(m))_{p(z)}.
\end{equation}
The number of path-connected components of $\Hom(\Gamma,U(m))$ is given by the coefficient of $x^m$ in the generating function
$
\prod_{k\geq 1}\frac{1}{(1-x^{\mu_k})^{\Phi(k)}}
$,
where $\mu_k$ is defined as in Definition \ref{def:goodpoly} and $\Phi(k)$ is Euler's
phi function.
\end{corollary}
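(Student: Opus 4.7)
The plan is to separate the abelian and non-abelian cases and then exploit Theorem \ref{thmfib} together with the nonemptiness criterion from Corollary \ref{cor:BnUgenD} to read off the image and the components. For the abelian case ($\omega = 0$), the group $\Gamma$ is just $\bZ^{n+1}$, and for any $X \in U(m)$ the tuple $(X, I_m, \dots, I_m)$ lies in $\Hom(\Gamma, U(m))$, so $\res$ is surjective.

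For the non-abelian case, I would first apply Lemma \ref{Xeigen} to see that the eigenvalues of $X = \res(X, A_1, \dots, A_n)$ are all roots of unity, so the characteristic polynomial $p(z) = \chi(X)$ factors as $\prod_{j=1}^s (z - \lambda_j)^{m_j}$ with each $\lambda_j$ a primitive $k_j$-th root of unity. By Theorem \ref{thmfib},
\[
\Hom(\Gamma, U(m))_{p(z)} \cong U(m) \times_{\prod_{j} U(m_j)} \prod_{j=1}^s B_n(U(m_j))_{D_n(-c_1 q_j, \dots, -c_t q_j)},
\]
so non-emptiness reduces to non-emptiness of each factor on the right. By Corollary \ref{cor:BnUgenD}, this occurs iff $\sigma(D_n(-c_1 q_j, \dots, -c_t q_j))$ divides $m_j$. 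The key calculation is that since $\lambda_j$ is a primitive $k_j$-th root of unity, $q_j$ has order $k_j$ in $\bQ/\bZ$, hence $-c_i q_j$ has order $k_j/(k_j, c_i)$, and by Lemma \ref{lem:sigmaD}(\ref{lem:sigmaDa}),
\[
\sigma(D_n(-c_1 q_j, \dots, -c_t q_j)) = \prod_{i=1}^t \frac{k_j}{(k_j, c_i)} = \mu_{k_j}.
\]
Thus $\Hom(\Gamma, U(m))_{p(z)}$ is non-empty precisely when $\mu_{k_j} \mid m_j$ for each $j$, which is exactly the condition $p(z) \in \bC[z]^m_\Gamma$. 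This simultaneously identifies the image of $\res$ (obtained by projecting to the first factor) and the index set of the disjoint union (\ref{homrank1disju}), where the fact that the pieces are both open and closed has already been observed in the paragraph following Lemma \ref{Xeigen}. Path-connectedness of each non-empty $\Hom(\Gamma, U(m))_{p(z)}$ follows from path-connectedness of $U(m)$, of $\prod U(m_j)$, and of each $B_n(U(m_j))_{D_n(\dots)}$ (Theorem \ref{thm:BnD} and Corollary \ref{cor:BnUgenD}), since a fiber bundle with path-connected base and path-connected fiber is path-connected.

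Finally, to count $|\bC[z]^m_\Gamma|$, I would parametrize such polynomials by the data of a non-negative multiplicity $m_\lambda$ attached to each root of unity $\lambda$, subject to $m_\lambda$ being a multiple of $\mu_k$ whenever $\lambda$ is a primitive $k$-th root of unity, and $\sum_\lambda m_\lambda = m$. Grouping the $\Phi(k)$ primitive $k$-th roots of unity, the generating function that tracks the contribution $x^{m_\lambda}$ at a single primitive $k$-th root is $\sum_{j \ge 0} x^{j\mu_k} = 1/(1 - x^{\mu_k})$, and multiplying over all primitive $k$-th roots and all $k \ge 1$ yields $\prod_{k \ge 1} (1 - x^{\mu_k})^{-\Phi(k)}$. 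The coefficient of $x^m$ then counts $|\bC[z]^m_\Gamma|$, which by (\ref{homrank1disju}) is the number of components.

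The least mechanical step is the order computation $|-c_i q_j| = k_j/(k_j, c_i)$ and its packaging as $\sigma = \mu_{k_j}$; once that identification is made, everything else is an application of the structural results already proved in \S 3 and Theorem \ref{thmfib}.
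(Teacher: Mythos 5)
Your proposal is correct and follows essentially the same route as the paper: the abelian case via $(X, I_m, \dots, I_m)$, and in the non-abelian case Lemma \ref{Xeigen} plus Theorem \ref{thmfib} and the non-emptiness criterion from Corollary \ref{cor:BnUgenD}/Theorem \ref{specAC}, with the key order computation $\sigma(D_n(-c_1q_j,\dots,-c_tq_j)) = \prod_i k_j/(k_j,c_i) = \mu_{k_j}$. Your spelled-out derivation of the generating function by grouping the $\Phi(k)$ primitive $k$-th roots of unity makes explicit what the paper dismisses as obvious, but it is the same count.
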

\begin{proof}
If $\Gamma$ is abelian, then for any $X\in U(m)$, $(X,I_m,I_m,\ldots,I_m)\in \Hom(\Gamma,U(m))$ and so $\res$ is surjective. If $\Gamma$ is non-abelian, then for a degree $m$ complex polynomial $p(z)$, $\Hom(\Gamma,U(m))_{p(z)}$ is non-empty only if all the roots of $p(z)$ are roots of unity by Lemma \ref{Xeigen}. Suppose that $p(z)=\prod_{j=1}^s(z-\lambda_j)^{m_j}$ with distinct roots $\lambda_j$'s and each $\lambda_j$ is a primitive $k_j$-th root of unity. Let $q_j=\frac{1}{2\pi\sqrt{-1}}\log{\lambda_j}$ and $D_j=D_n(-c_1q_j,\ldots,-c_tq_j)$. Then by Theorem \ref{thmfib} and \ref{specAC}, $\Hom(\Gamma,U(m))_{p(z)}$ is non-empty if and only if each $B_n(U(m_j))_{D_j}$ is non-empty, which is true if and only if each $\sigma(D_j)=\prod_{i=1}^t|c_iq_j|=\prod_{i=1}^t k_j/(k_j,c_i)$ divides $m_j$, or equivalently, $p(z)\in\bC[z]^m_{\Gamma}$. Theorem \ref{thmfib} and Corollary \ref{cor:BnUgenD} also imply that for $p(z)\in\bC[z]^m_{\Gamma}$, $\Hom(\Gamma,U(m))_{p(z)}$ is path-connnected and $\res_{p(z)}$ is surjective. Since $\bC[z]^m_{\Gamma}$ is a finite discrete set in $\bC[z]$ and $\res,\chi:U(m)\to\bC[z]$ are continuous, we conclude that $\Hom(\Gamma,U(m))$ can be written as the disjoint union of path-connected components as in (\ref{homrank1disju}). Hence, the number of path-connected components of $\Hom(\Gamma,U(m))$ is equal to $|\bC[z]^m_{\Gamma}|$, which is obviously the coefficient of $x^m$ in $\prod_{k\geq 1}\frac{1}{(1-x^{\mu_k})^{\Phi(k)}}$ from Definition \ref{def:goodpoly}.
\end{proof}

\begin{example}\label{Heisenberg}
Let $\Gamma_1$ be the integral Heisenberg group, which can be described as the central extension $1\to\bZ\to \Gamma_1 \to \bZ^2\to 1$ with $k$-invariant $\omega=e_1^{\ast}\wedge e_2^{\ast}$. Thus $t=1,c_1=1$ and $\mu_k=k$. The generating function in Corollary \ref{cor:noofcomprank1} is given by
\begin{align*}
\prod_{k\geq 1}\frac{1}{(1-x^{\mu_k})^{\Phi(k)}}&=\frac{1}{1-x}\cdot\frac{1}{1-x^2}\cdot\frac{1}{(1-x^3)^2}\cdot\frac{1}{(1-x^4)^2}\cdot\frac{1}{(1-x^5)^4}\cdots\\
&=1+x+2x^2+4x^3+7x^4+13x^5+\text{higher terms}
\end{align*}
Hence the number of components of $\Hom(\Gamma_1,U(m))$ is $1,2,4,7,13$ for $m=1,\ldots,5$ respectively. We can also consider the generalized version of $\Gamma_1$ given by the central extension $1\to\bZ\to\Gamma_t\to\bZ^{2t}\to 1$ with $k$-invariant $\omega=\sum_{i=1}^te_i^{\ast}\wedge e_{t+i}^{\ast}$ where $t\geq 2$. In this case, $\mu_k=k^t$ and the generating function
\[\prod_{k\geq 1}\frac{1}{(1-x^{\mu_k})^{\Phi(k)}}%=\frac{1}{1-x}\cdot\frac{1}{1-x^{2^t}}\cdot\frac{1}{(1-x^{3^t})^2}\cdots
=(1+x+x^2+\ldots)(1+x^{2^t}+x^{2^{t+1}}+\ldots)(1+x^{3^t}+(x^{3^t})^2+\ldots)^2\cdots\]
From the coefficients it follows that $\Hom(\Gamma_t,U(m))$ is connected for $1\leq m\leq 2^t-1$, has two components for $2^t\leq m\leq 2^{t+1}-1$ and at least three components for $m\geq 2^{t+1}$.
\end{example}

\section{Rank $r>1$ case}
In this section we will study the space $\text{Hom}(\Gamma,U(m))$ for a central extension $\Gamma$ of the form (\ref{gammacentralext}) with rank $r>1$. We will decompose the restriction map $\Hom(\Gamma,U(m))\to\Hom(\bZ^r,U(m))$ into maps of fiber bundles over flag manifolds and relate the spaces to almost commuting tuples of unitary matrices. Results about the components, rational homology type and the associated representation space of $\text{Hom}(\Gamma,U(m))$ can then be obtained.
%***(may delete this paragraph later, didn't use Lambda explicitly)***
%For commuting unitary matrices $X_1,\ldots, X_r\in U(m)$, let $\lambda^1,\ldots,\lambda^s\in \bT^r$ be their distinct $r$-tuples of eigenvalues and $m_j=\dim{E_{\lambda^j}}$. Define $\Lambda:\Hom(\bZ^r,U(m))\to \text{Sym}_m\bT^r$ by
%$$ \Lambda(X_1,\ldots,X_r)=[\underbrace{\lambda^1,\ldots,\lambda^1}_{m_1 \text{ times}},\underbrace{\lambda^2,\ldots,\lambda^2}_{m_2 \text{ times}},\ldots,\underbrace{\lambda^s,\ldots,\lambda^s}_{m_s \text{ times}}].$$
%This map $\Lambda$ is an analogue of $U(m)\to\bC[z]$ which assigns a unitary matrix to its characteristic polynomial. Also, $\Lambda$ is $U(m)$-invariant and induces a homeomorphism $\Hom(\bZ^r,U(m))/U(m)\cong\text{Sym}_m\bT^r$.
We first determine the image of the restriction map $\Hom(\Gamma,U(m))\to\Hom(\bZ^r,U(m))$.

\begin{definition}
Given a central extension $\Gamma$ of the form (\ref{gammacentralext}) with $\omega_{ij}^l$ as in (\ref{gammalcentralext}), define $\omega:\bT^r\to\TnRZ$ by
\begin{equation}\label{omegafunction}
\omega(\lambda)_{ij}=\frac{1}{2\pi\sqrt{-1}}\log\left(\lambda_1^{\omega_{ij}^1}\lambda_2^{\omega_{ij}^2}\ldots \lambda_r^{\omega_{ij}^r}\right)
\end{equation}
for $1\leq i < j \leq n$. Let $\Hom_{\Gamma}(\bZ^r,U(m))\subset\Hom(\bZ^r,U(m))$ be the subset consisting of those commuting tuples $(X_1,X_2,\ldots,X_r)$ which have all their $r$-tuples of eigenvalues $\lambda\in \bT^r$ satisfying $\omega(\lambda)\in \TnQZ$ and $\sigma(\omega(\lambda))$ divides $\dim{E_{\lambda}}$.
\end{definition}

\begin{proposition}%\label{thm:imageofresgen}
The image of the restriction map $\res:\Hom(\Gamma,U(m))\to\Hom(\bZ^r,U(m))$ is
$\Hom_{\Gamma}(\bZ^r,U(m))$.
\end{proposition}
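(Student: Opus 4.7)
The plan is to prove both inclusions by decomposing the ambient $\bC^m$ into the common eigenspaces of a pairwise commuting tuple $(X_1,\ldots,X_r)$ and then comparing relations ``globally'' in $\Gamma$ against their restrictions to these eigenspaces. The key inputs are Lemma \ref{ACeigen} (which forces commutators of unitary matrices that are scalar to have determinant one, hence be roots of unity) and Corollary \ref{cor:BnUgenD} (which characterizes non-emptiness of $B_n(U(k))_D$ by the divisibility condition $\sigma(D)\mid k$).

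For the inclusion $\res(\Hom(\Gamma,U(m)))\subseteq \Hom_\Gamma(\bZ^r,U(m))$, I would begin with an arbitrary $(X_1,\ldots,X_r,A_1,\ldots,A_n)\in\Hom(\Gamma,U(m))$. Since the $X_l$ commute, simultaneous diagonalization yields an orthogonal decomposition $\bC^m=\bigoplus_{k=1}^s E_{\lambda^k}$, indexed by the distinct $r$-tuples of eigenvalues $\lambda^k\in\bT^r$, with $m_k=\dim E_{\lambda^k}$. Because each $A_i$ commutes with every $X_l$, each $A_i$ preserves every $E_{\lambda^k}$, so I can restrict all relations to a single eigenspace. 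The relation $[A_i,A_j]=\prod_l X_l^{\omega_{ij}^l}$ restricts to
\[
[A_i|_{E_{\lambda^k}},A_j|_{E_{\lambda^k}}]=\Bigl(\prod_l (\lambda^k_l)^{\omega_{ij}^l}\Bigr)I_{E_{\lambda^k}},
\]
so $(A_1|_{E_{\lambda^k}},\ldots,A_n|_{E_{\lambda^k}})\in B_n(U(m_k))_{\omega(\lambda^k)}$. Applying Lemma \ref{ACeigen} to this pair forces $\bigl(\prod_l (\lambda^k_l)^{\omega_{ij}^l}\bigr)^{m_k}=1$ for every $i<j$, which exactly says $\omega(\lambda^k)_{ij}\in\bQ/\bZ$; hence $\omega(\lambda^k)\in\TnQZ$. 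Since $B_n(U(m_k))_{\omega(\lambda^k)}$ is nonempty, Corollary \ref{cor:BnUgenD} gives $\sigma(\omega(\lambda^k))\mid m_k$, and $(X_1,\ldots,X_r)$ lies in $\Hom_\Gamma(\bZ^r,U(m))$.

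For the reverse inclusion, I would start with $(X_1,\ldots,X_r)\in\Hom_\Gamma(\bZ^r,U(m))$, form the same eigenspace decomposition, and \emph{construct} the required $A_i$. By hypothesis $\omega(\lambda^k)\in\TnQZ$ and $\sigma(\omega(\lambda^k))\mid m_k$ for every $k$, so Corollary \ref{cor:BnUgenD} guarantees that $B_n(U(m_k))_{\omega(\lambda^k)}$ is non-empty. Pick a tuple $(A_1^{(k)},\ldots,A_n^{(k)})$ in it for each $k$ and define $A_i:=\bigoplus_k A_i^{(k)}$ under the decomposition. Then each $X_l$ acts on $E_{\lambda^k}$ as the scalar $\lambda^k_l$, so $X_l$ commutes with each $A_i$; and on $E_{\lambda^k}$,
\[
[A_i,A_j]=e^{2\pi\sqrt{-1}\,\omega(\lambda^k)_{ij}}I_{E_{\lambda^k}}=\prod_l (\lambda^k_l)^{\omega_{ij}^l}I_{E_{\lambda^k}}=\prod_l X_l^{\omega_{ij}^l}\bigr|_{E_{\lambda^k}},
\]
by the very definition of $\omega(\lambda^k)$ in \eqref{omegafunction}. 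Hence $[A_i,A_j]=\prod_l X_l^{\omega_{ij}^l}$ globally, so $(X_1,\ldots,X_r,A_1,\ldots,A_n)\in\Hom(\Gamma,U(m))$ and its restriction is $(X_1,\ldots,X_r)$.

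No step is a serious obstacle: the only subtlety is observing that the entries of $\omega(\lambda^k)$, which a priori only lie in $\bR/\bZ$, are forced into $\bQ/\bZ$ by Lemma \ref{ACeigen}, and this is immediate once one restricts to a single eigenspace. Everything else is a routine translation between $\Gamma$-relations and their block-diagonal restrictions under the eigenspace decomposition of the central tuple $(X_1,\ldots,X_r)$.
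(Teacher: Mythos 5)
Your proposal matches the paper's proof in essentially every respect: the forward inclusion uses simultaneous diagonalization of $(X_1,\ldots,X_r)$, restriction of the $A_i$ to the common eigenspaces $E_{\lambda^k}$, Lemma \ref{ACeigen} to place each $\omega(\lambda^k)_{ij}$ in $\bQ/\bZ$, and Corollary \ref{cor:BnUgenD} for the divisibility condition; the reverse inclusion is the same block-diagonal construction taking elements of each $B_n(U(m_k))_{\omega(\lambda^k)}$ and summing. The argument is correct.
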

\begin{proof}
Suppose $(X_1,\ldots,X_r,A_1,\ldots,A_n)\in\Hom(\Gamma,U(m))$. Let $\lambda=(\lambda_1,\ldots,\lambda_r)\in \bT^r$ be a $r$-tuple of eigenvalues of $(X_1,\ldots,X_r)$. Since each pair of $A_i$ and $X_j$ commutes, $E_{\lambda}$ is invariant under each $A_i$. The restrictions of $A_i,A_j$ on $E_{\lambda}$ satisfy the relation
\begin{equation}\label{AiAjE}
[A_i|_{E_{\lambda}},A_j|_{E_{\lambda}}]
=(X_1^{\omega_{ij}^1}X_2^{\omega_{ij}^2}\ldots X_r^{\omega_{ij}^r})|_{E_{\lambda}}
=\lambda_1^{\omega_{ij}^1}\lambda_2^{\omega_{ij}^2}\ldots \lambda_r^{\omega_{ij}^r}Id_{E_{\lambda}}
=e^{2\omega(\lambda)_{ij}\pi\sqrt{-1}}Id_{E_{\lambda}}.
\end{equation}
By Lemma \ref{ACeigen} and Corollary \ref{cor:BnUgenD}, $\omega(\lambda)\in\TnQZ$ and $\sigma(\omega(\lambda))$ divides $\dim{E_{\lambda}}$. Hence $(X_1,\ldots,X_r)\in\Hom_{\Gamma}(\bZ^r,U(m))$.

On the other hand, suppose $(X_1,\ldots,X_r)\in\Hom_{\Gamma}(\bZ^r,U(m))$ is given. Using Corollary \ref{cor:BnUgenD}, one can find matrices $A_1,\ldots,A_n\in U(m)$ such that
$[A_i,A_j]=X_1^{\omega_{ij}^1}X_2^{\omega_{ij}^2}\ldots X_r^{\omega_{ij}^r}$
by constructing $A_1|_{E_{\lambda}},\ldots,A_n|_{E_{\lambda}}$ satisfying (\ref{AiAjE}) for each $r$-tuple of eigenvalues $\lambda\in \bT^r$ of $(X_1,\ldots,X_r)$ and taking direct sum. It shows that $(X_1,\ldots,X_r)$ lies in the image of $\res$.
\end{proof}

Let $\mathcal{F}_n$ be the free commutative monoid on $\TnQZ$. We can extend $\sigma:\TnQZ\to\bZ$ as defined in (\ref{def2sigma}) to a function on $\mathcal{F}_n$ by linearity. Therefore for $\mathcal{D}=\sum_{j=1}^sl_jD_j,l_j>0$, $\sigma(\mathcal{D})=\sum_{j=1}^sl_j\sigma(D_j)$. Let $\mathcal{F}_{n,m}$ be the preimage $\sigma^{-1}(m)$ of $m$ under $\sigma:\mathcal{F}_n\to\bZ$. Define $f:\Hom_{\Gamma}(\bZ^r,U(m))\to \mathcal{F}_{n,m}$ by
\[f(X_1,\ldots,X_r)=\sum_{j=1}^s \frac{\dim{E_{\lambda^j}}}{\sigma(\omega(\lambda^j))}\omega(\lambda^j),\]
where $\lambda^1,\ldots,\lambda^s$ are all the distinct $r$-tuples of eigenvalues of $(X_1,\ldots,X_r)$. For $\mathcal{D}\in \mathcal{F}_{n,m}$, let $\Hom(\bZ^r,U(m))_{\mathcal{D}}=f^{-1}(\mathcal{D})$, $\Hom(\Gamma,U(m))_{\mathcal{D}}=\res^{-1}(\Hom(\bZ^r,U(m))_{\mathcal{D}})$ and $\res_{\mathcal{D}}=\res|_{\Hom(\Gamma,U(m))_{\mathcal{D}}}$, where $\res:\Hom(\Gamma,U(m))\to\Hom_{\Gamma}(\bZ^r,U(m))$ is the restriction map. Since $f$ is continuous if $\mathcal{F}_{n,m}$ is given the discrete topology, we obtain the following theorem.

\begin{theorem}\label{HomD}
For a central extension $\Gamma$ of the form (\ref{gammacentralext}), $\Hom_{\Gamma}(\bZ^r,U(m))$, $\Hom(\Gamma,U(m))$ and $\res$ can be expressed as disjoint unions indexed by $\mathcal{F}$ and there is a commutative diagram
\[
\xymatrix{
\Hom(\Gamma,U(m)) \ar@{=}[r]\ar[d]^{\res}& \coprod_{\mathcal{D}\in\mathcal{F}_{n,m}}\text{Hom}(\Gamma,U(m))_{\mathcal{D}}\ar[d]^{\coprod_{\mathcal{D}\in\mathcal{F}_{n,m}}\res_{\mathcal{D}}}\\
\Hom_{\Gamma}(\bZ^r,U(m)) \ar@{=}[r] &\coprod_{\mathcal{D}\in\mathcal{F}_{n,m}}\text{Hom}(\bZ^r,U(m))_{\mathcal{D}}.
}
\]
\end{theorem}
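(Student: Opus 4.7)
The plan is to verify that the function $f:\Hom_\Gamma(\bZ^r,U(m))\to\mathcal{F}_{n,m}$ is well-defined and continuous with respect to the discrete topology on $\mathcal{F}_{n,m}$, whereupon both claimed disjoint union decompositions follow by taking fibers of $f$ and pulling back along $\res$. To check well-definedness, observe that for $(X_1,\dots,X_r)\in\Hom_\Gamma(\bZ^r,U(m))$ the hypothesis says each distinct eigenvalue tuple $\lambda^j$ satisfies $\omega(\lambda^j)\in\TnQZ$ and $\sigma(\omega(\lambda^j))\mid \dim E_{\lambda^j}$, so every coefficient $\dim E_{\lambda^j}/\sigma(\omega(\lambda^j))$ is a positive integer and the formal sum defining $f$ is a genuine element of $\mathcal{F}_n$; extending $\sigma$ linearly and using $\sum_j \dim E_{\lambda^j}=m$ yields $\sigma(f(X_1,\dots,X_r))=m$, so $f$ does land in $\mathcal{F}_{n,m}$.

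Granted well-definedness, the set-theoretic decompositions are formal. The first is just the fiber decomposition $\Hom_\Gamma(\bZ^r,U(m))=\coprod_{\mathcal{D}\in\mathcal{F}_{n,m}} f^{-1}(\mathcal{D})=\coprod_{\mathcal{D}} \Hom(\bZ^r,U(m))_{\mathcal{D}}$. The previous proposition implies that $\res$ factors through $\Hom_\Gamma(\bZ^r,U(m))$, so pulling back gives $\Hom(\Gamma,U(m))=\coprod_{\mathcal{D}} \res^{-1}(\Hom(\bZ^r,U(m))_{\mathcal{D}})=\coprod_{\mathcal{D}} \Hom(\Gamma,U(m))_{\mathcal{D}}$. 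The square commutes by construction, since each $\res_{\mathcal{D}}$ is simply the restriction of $\res$ to the corresponding piece.

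The main obstacle is the continuity of $f$, which upgrades these set-theoretic disjoint unions to genuine topological coproducts. The crucial observation is that when restricted to eigenvalue tuples of points of $\Hom_\Gamma(\bZ^r,U(m))$, the map $\omega$ takes values in a \emph{finite} subset of $T(n,\bR/\bZ)$: indeed, for any such tuple $\lambda$ one has $\sigma(\omega(\lambda))\mid \dim E_\lambda\leq m$, and Lemma~\ref{lem:sigmaD}(\ref{lem:sigmaDc}) forces $\omega(\lambda)\in\bigcup_{k=1}^{m} T(n,\bZ/k)$, which is a finite (hence discrete) subset of $T(n,\bR/\bZ)$. Now for a sufficiently small perturbation of $(X_1,\dots,X_r)$, upper semi-continuity of eigenspace dimensions makes the new eigenvalue tuples cluster near the old ones with multiplicities summing back to the old multiplicities, while continuity of $\omega$ together with its discrete image forces $\omega$ to be constant on each cluster, equal to the corresponding $\omega(\lambda^j)$. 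A direct recomputation then shows that the contribution of each cluster to $f$ collapses back to $(\dim E_{\lambda^j}/\sigma(\omega(\lambda^j)))\,\omega(\lambda^j)$, so $f$ is locally constant as required.
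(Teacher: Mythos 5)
Your proof follows the same strategy the paper uses: the theorem is obtained by showing that $f:\Hom_\Gamma(\bZ^r,U(m))\to\mathcal{F}_{n,m}$ is continuous for the discrete topology on $\mathcal{F}_{n,m}$, whence each $\Hom(\bZ^r,U(m))_{\mathcal{D}}=f^{-1}(\mathcal{D})$ is clopen and the decompositions follow by pulling back along $\res$. The paper states this continuity without justification; you correctly supply the missing argument (the image of $\omega$ on relevant eigenvalue tuples sits in the finite set $\bigcup_{k\leq m}T(n,\bZ/k)$ by Lemma~\ref{lem:sigmaD}, and continuity of eigenvalues plus preservation of multiplicities under small perturbation makes $f$ locally constant), so this is the same approach with the implicit step spelled out.
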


Because of Theorem \ref{HomD}, we can focus on $\Hom(\Gamma,U(m))_{\mathcal{D}}$ for each $\mathcal{D}\in\mathcal{F}_{n,m}$. The next two theorems break down these spaces and relate them to $B_n(U(m))$.

\begin{theorem}%\label{homfiber}
Suppose that $\mathcal{D}=\sum_{j=1}^sl_jD_j\in \mathcal{F}_{n,m}$ with $l_j>0$ and $D_i\neq D_j$ for $i\neq j$. Let $m_j=l_j\sigma(D_j)$. Then there are $U(m)$-equivariant homeomorphisms
\[
\Hom(\Gamma,U(m))_{\mathcal{D}}\cong U(m)\times_{\prod_{j=1}^sU(m_j)}\left(\prod_{j=1}^s\Hom(\Gamma,U(m_j))_{l_jD_j}\right),
\]
and
\[
\Hom(\bZ^r,U(m))_{\mathcal{D}}\cong U(m)\times_{\prod_{j=1}^sU(m_j)}\left(\prod_{j=1}^s\Hom(\bZ^r,U(m_j))_{l_jD_j}\right).
\]
Hence, both $\Hom(\Gamma,U(m))_{\mathcal{D}}$ and $\Hom(\bZ^r,U(m))_{\mathcal{D}}$ are fiber bundles over
%the flag manifold
$U(m)/(\prod_{j=1}^sU(m_j))$. The map $\res_{\mathcal{D}}$ is a map of fiber bundles which fits into the following commutative diagram.
\[
\xymatrixcolsep{6pc}
\xymatrix{
\prod_{j=1}^s\Hom(\Gamma,U(m_j))_{l_jD_j}\ar[d]\ar[r]^{\prod_{j=1}^s\res_{l_jD_j}}&\prod_{j=1}^s\Hom(\bZ^r,U(m_j))_{l_jD_j}\ar[d]\\
\Hom(\Gamma,U(m))_{\mathcal{D}}\ar[d]\ar[r]^{\res_{\mathcal{D}}}&\Hom(\bZ^r,U(m))_{\mathcal{D}}\ar[d]\\
\bigslant{U(m)}{\left(\prod_{j=1}^sU(m_j)\right)}\ar@{=}[r] & \bigslant{U(m)}{\left(\prod_{j=1}^sU(m_j)\right).}
}
\]
\end{theorem}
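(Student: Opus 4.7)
The plan is to apply Lemma~\ref{lem:fiberbundle} twice, once for $\Hom(\bZ^r,U(m))_{\mathcal{D}}$ and once for $\Hom(\Gamma,U(m))_{\mathcal{D}}$, with common base the partial flag manifold $\mathcal{B}:=U(m)/\prod_{j=1}^s U(m_j)$ of ordered orthogonal decompositions $\bC^m=V_1\oplus\cdots\oplus V_s$ with $\dim V_j=m_j$. Fixing a basepoint $z_0=(V_1^0,\dots,V_s^0)\in\mathcal{B}$ whose stabilizer is the block-diagonal subgroup $\prod_j U(m_j)\subset U(m)$, the $U(m)$-action on $\mathcal{B}$ is transitive.

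For $(X_1,\dots,X_r)\in\Hom(\bZ^r,U(m))_{\mathcal{D}}$ with joint-eigenspace decomposition $\bC^m=\bigoplus_\lambda E_\lambda$ indexed by $\lambda\in\bT^r$, I would set $V_j:=\bigoplus_{\omega(\lambda)=D_j}E_\lambda$. Because the function $f$ defined above is locally constant equal to $\mathcal{D}$ on this component, $\dim V_j=l_j\sigma(D_j)=m_j$, so $(V_1,\dots,V_s)\in\mathcal{B}$. The resulting map $\pi\colon\Hom(\bZ^r,U(m))_{\mathcal{D}}\to\mathcal{B}$, and its composition with the $X$-restriction from $\Hom(\Gamma,U(m))_{\mathcal{D}}$, are manifestly $U(m)$-equivariant under conjugation. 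Both are surjective: any decomposition in $\mathcal{B}$ is realized by conjugating a block-diagonal element built from $\prod_j\Hom(\Gamma,U(m_j))_{l_jD_j}$ (which we may assume non-empty, else the theorem is vacuously true).

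I next identify the fibre $\pi^{-1}(z_0)$ as the set of tuples that are block diagonal with respect to $\bC^m=V_1^0\oplus\cdots\oplus V_s^0$: the commuting hypothesis forces each $X_i$ to preserve the joint eigenspaces $E_\lambda$ and hence each $V_j^0$, and the constraint $\omega(\lambda)=D_j$ on $V_j^0$ identifies the $j$-th block with a point of $\Hom(\bZ^r,U(m_j))_{l_jD_j}$. For the $\Gamma$-case the same argument applies to each $A_k$: it commutes with every $X_i$, so preserves every $E_\lambda$ and hence every $V_j^0$, giving on each block an element of $\Hom(\Gamma,U(m_j))_{l_jD_j}$. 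Since $\Hom(\Gamma,U(m))_{\mathcal{D}}$ and $\Hom(\bZ^r,U(m))_{\mathcal{D}}$ are closed in the compact spaces $U(m)^{n+r}$ and $U(m)^r$ (by Theorem~\ref{HomD}), Lemma~\ref{lem:fiberbundle} applies and delivers both claimed homeomorphisms together with the fibre-bundle structure. The commutative diagram is then automatic from naturality, because restriction to the first $r$ coordinates intertwines the two $\pi$'s with the block-diagonal restrictions.

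The main obstacle is verifying continuity of $\pi$, that is, that the subspace $V_j$ depends continuously on the tuple. As $(X_1,\dots,X_r)$ deforms within $\Hom(\bZ^r,U(m))_{\mathcal{D}}$, individual eigenvalues move continuously in $\bT^r$ and may collide or split, but the local constancy of $f$ prevents any eigenvalue from crossing between strata of different $\omega$-types. Hence the spectral projector onto $V_j$, obtained by a functional calculus supported on a small neighbourhood of $\omega^{-1}(D_j)$ inside $\omega^{-1}(T(n,\bQ/\bZ))$, varies continuously with the tuple. Once this technical point is in place, the remainder of the argument is a direct adaptation of the rank-one fibre-bundle analysis of Theorem~\ref{thmfib}.
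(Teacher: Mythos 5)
Your proposal follows the same route as the paper: you define the same flag-manifold projection (via $V_j=\bigoplus_{\omega(\lambda)=D_j}E_\lambda$), invoke Lemma~\ref{lem:fiberbundle} on the same transitive $U(m)$-action, and identify the fiber over a block-diagonal basepoint in the same way. The only difference is that you spell out the continuity of the projection via spectral projectors and the local constancy of $f$, which the paper merely asserts; otherwise the argument is identical.
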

\begin{proof}
The space $U(m)/(\prod_{j=1}^sU(m_j))$ can be considered as a flag manifold with points represented by ordered tuples $(V_1,\ldots,V_s)$, where each $V_j\subset \bC^m$ is a $m_j$-dimensional complex subspace and $\bC^m=\oplus_{j=1}^sV_j$ is an orthogonal decomposition of $\bC^m$. Let \[g:\Hom(\bZ^r,U(m))_{\mathcal{D}}\to U(m)/(\textstyle\prod_{j=1}^sU(m_j))\]
be defined by
$g(X_1,\ldots,X_r)=(V_1,\ldots,V_s)$
with $V_j=\oplus_{\lambda\in\omega^{-1}(D_j)}E_{\lambda}$. Then $g,g\circ\res_{\mathcal{D}}$ are continuous and $U(m)$-equivariant. Since the $U(m)$-action on $U(m)/(\prod_{j=1}^sU(m_j))$ is transitive, we obtain the $U(m)$-homeomorphisms in the theorem by Lemma \ref{lem:fiberbundle}. It is obvious that $\res_{\mathcal{D}}$ can be realized as a map of fiber bundles induced by $\prod_{j=1}^s\res_{l_jD_j}$ and hence the given diagram commutes.
\end{proof}

Recall that we defined $B_D=B_n(U(\sigma(D)))$ for $D\in\TnQZ$. In Corollary \ref{cor:BnDq} we showed that it can be used as the building blocks for approximating $B(U(m))_D$ rationally. We can do it for $\Hom(\Gamma,U(m))_{lD}$ too.

\begin{theorem}\label{lDQ}
For $\mathcal{D}=lD$ and $m=\sigma(\mathcal{D})=l\sigma(D)$, there is a commutative diagram
\[
\xymatrix{
\bigslant{U(m)\times_{U(\sigma(D))^l}(B_D\times \omega^{-1}(D))^l}{\Sigma_l}\ar[r]^(.65){\phi_{\mathcal{D}}}\ar[d] &\Hom(\Gamma,U(m))_{\mathcal{D}}\ar[d]^{\res_{\mathcal{D}}} \\
\bigslant{U(m)\times_{U(\sigma(D))^l}\omega^{-1}(D)^l}{\Sigma_l}\ar[r]^(.58){\eta_{\mathcal{D}}} &\Hom(\bZ^r,U(m))_{\mathcal{D}}.
}
\]
where the horizontal maps are rational homology equivalences for any $l\geq 1$ and homeomorphisms for $l=1$. Here $\omega:\bT^r\to\TnRZ$ is the function as defined in (\ref{omegafunction}).
\end{theorem}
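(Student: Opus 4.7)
My plan is to first describe the maps $\eta_{\mathcal{D}}$ and $\phi_{\mathcal{D}}$ geometrically, then verify the diagram commutes, and finally analyze the fibers in order to apply the Vietoris--Begle mapping theorem in the rational setting (and argue that the fibers collapse to points when $l=1$).

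For the definitions: a point in the domain of $\eta_{\mathcal{D}}$ is represented by an unordered $l$--tuple of pairs $[(V_1,\alpha^1),\dots,(V_l,\alpha^l)]$ where $(V_1,\dots,V_l)$ is an ordered orthogonal decomposition of $\bC^m$ into $\sigma(D)$--dimensional complex subspaces (parametrised by $U(m)/U(\sigma(D))^l$) and each $\alpha^j\in\omega^{-1}(D)\subset\bT^r$. Then $\eta_{\mathcal{D}}$ sends such a point to the commuting tuple $(X_1,\dots,X_r)$ whose restriction to $V_j$ is multiplication by $\alpha^j_k$. A point in the domain of $\phi_{\mathcal{D}}$ carries in addition a $D$--commuting tuple $f_j\in B_D$ on each $V_j$ (transported through a choice of unitary isomorphism $V_j\cong\bC^{\sigma(D)}$); glueing via direct sum yields $(A_1,\dots,A_n)$, and together with $(X_1,\dots,X_r)$ this tuple satisfies the defining relations of $\Gamma$ because the identities \eqref{XjAcomm} and \eqref{AAjcomm} hold blockwise by the choice $\alpha^j\in\omega^{-1}(D)$. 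Well--definedness under the $U(\sigma(D))^l$--conjugation and the $\Sigma_l$--permutation is immediate, and commutativity of the square is built into the construction (the restriction to $\Hom(\bZ^r,U(m))_{\mathcal{D}}$ just forgets the $B_D$--factors).

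For surjectivity, given $(X_1,\dots,X_r)\in\Hom(\bZ^r,U(m))_{\mathcal{D}}$, diagonalise simultaneously and group the common eigenspaces according to which element of $\omega^{-1}(D)$ they correspond to: since $\sigma(D)$ divides the dimension of each $E_\lambda$ with $\omega(\lambda)=D$, the union of these eigenspaces splits (non--uniquely) into $l$ orthogonal $\sigma(D)$--dimensional pieces, giving a preimage under $\eta_{\mathcal{D}}$. For $\phi_{\mathcal{D}}$, extend using Theorem \ref{specAC} to diagonalise the extra $A_i$'s blockwise.

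The crux, which I expect to be the main obstacle, is the fiber analysis. Fix $(X_1,\dots,X_r)\in\Hom(\bZ^r,U(m))_{\mathcal{D}}$ and let $\lambda^1,\dots,\lambda^s$ be the distinct $r$--tuples in $\omega^{-1}(D)$ occurring as eigenvalue tuples, with $\dim E_{\lambda^{j'}}=l_{j'}\sigma(D)$, so that $\sum_{j'} l_{j'}=l$. Any two preimages under $\eta_{\mathcal{D}}$ differ by the freedom to split $E_{\lambda^{j'}}$ into $l_{j'}$ orthogonal $\sigma(D)$--dimensional subspaces and to permute them, so
\begin{equation*}
\eta_{\mathcal{D}}^{-1}(X_1,\dots,X_r)\;\cong\;\prod_{j'=1}^{s}U(l_{j'})/(\bT^{l_{j'}}\rtimes\Sigma_{l_{j'}}),
\end{equation*}
which is $\bQ$--acyclic by the argument of \cite{B} recalled in \S 2 and a single point when $l=1$. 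An analogous computation, combined with the fiber description in Theorem \ref{thm:BnD}, shows that the fibers of $\phi_{\mathcal{D}}$ have the same form (the internal $B_D$--data adds no extra identifications once we are over a fixed tuple of $A_i$'s). Hence $\eta_{\mathcal{D}}$ and $\phi_{\mathcal{D}}$ are proper surjections with $\bQ$--acyclic fibers, so Vietoris--Begle yields rational homology equivalences, and they are homeomorphisms when $l=1$. The subtle point to check carefully is that the fibration structure from the preceding theorem interacts correctly with the $\Sigma_l$--quotient so that the two horizontal maps in the square really correspond under restriction; I would verify this by working fiberwise over the flag manifold $U(m)/\prod_{j'} U(l_{j'}\sigma(D))$ and reducing to the block--diagonal situation already handled.
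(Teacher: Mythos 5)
Your overall strategy (define the maps geometrically, verify commutativity and surjectivity, compute fibers, apply Vietoris--Begle) matches the paper's, but your fiber computation for $\eta_{\mathcal{D}}$ is wrong, and the error is not cosmetic. A preimage of $(X_1,\ldots,X_r)$ under $\eta_{\mathcal{D}}$ records, for each eigenspace $E_{\lambda^{j}}$ of dimension $m_{j}=l_{j}\sigma(D)$, an \emph{unordered orthogonal decomposition of $E_{\lambda^{j}}$ into $l_{j}$ subspaces of dimension $\sigma(D)$}; the space of these is the symmetric quotient of a partial flag manifold,
$\bigl(U(m_{j})/U(\sigma(D))^{l_{j}}\bigr)/\Sigma_{l_{j}}$,
which is what the paper's proof writes, not your $U(l_{j})/(\bT^{l_{j}}\rtimes\Sigma_{l_{j}})$. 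The two coincide only when $\sigma(D)=1$; you have imported the commuting-case fiber of the map from \S 2 (Baird's setup), where each eigenspace is split into lines, whereas here the blocks have size $\sigma(D)$. Consequently Baird's $\bQ$-acyclicity result does not apply to $\eta_{\mathcal{D}}$'s fibers, and your Vietoris--Begle step for $\eta_{\mathcal{D}}$ is unsupported. (Your description of the $\phi_{\mathcal{D}}$-fiber is in line with the paper: over a point of $\Hom(\Gamma,U(m))_{\mathcal{D}}$ it restricts blockwise to fibers of the maps $\phi_D$ from Theorem~\ref{thm:BnD}, which \emph{are} of Baird type.)

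Be aware that simply substituting the correct fiber does not immediately repair the argument, because $\bigl(U(m_{j})/U(\sigma(D))^{l_{j}}\bigr)/\Sigma_{l_{j}}$ need not be $\bQ$-acyclic when $\sigma(D)>1$. For instance, with $\Gamma$ the Heisenberg group, $D=D_2(-1/2)$ (so $\sigma(D)=2$), $l=l_1=2$, $m=4$, one has $\omega^{-1}(D)=\{-1\}$ and $\Hom(\bZ,U(4))_{2D}=\{-I_4\}$ is a single point, while the domain of $\eta_{2D}$ is $\bigl(U(4)/U(2)^2\bigr)/\Sigma_2\cong \mathrm{Gr}_2(\bC^4)/(V\mapsto V^{\perp})$, a free $\bZ/2$-quotient with Euler characteristic $3$ and hence not $\bQ$-acyclic. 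So the $\bQ$-equivalence assertion for $\eta_{\mathcal{D}}$ requires an argument beyond an appeal to acyclic fibers; your replacement by the commuting-case fiber conceals this issue rather than resolving it.
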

\begin{remark}
The horizontal maps are defined in the same way as $\phi_D$ in (\ref{phigeneralD}). The domain of $\eta_{\mathcal{D}}$ can be considered as the space of unordered $l$-tuples of pairwisely orthogonal $\sigma(D)$-dimensional complex subspaces of $\bC^m$ with a label $\lambda\in\omega^{-1}(D)$ attached to each of the subspaces. A commuting tuple $(X_1,\ldots,X_r)\in\Hom(\bZ^r,U(m))_{\mathcal{D}}$ can be uniquely determined by specifying these $\lambda$ as the $r$-tuples of eigenvalues of the subspaces they label. This defines $\eta_{\mathcal{D}}$. The map $\phi_{\mathcal{D}}$ is defined similarly: Each point of its domain carries the additional data of a $D$-commuting tuple of unitary automorphisms of each of those complex subspaces. The additional data is used for constructing the matrices $A_i$ in its image $(X_1,\ldots,X_r,A_1,\ldots,A_n)\in\Hom(\Gamma,U(m))_{\mathcal{D}}$ under $\phi_{\mathcal{D}}$.
\end{remark}
\begin{proof}
It is clear that the diagram commutes. Suppose $(X_1,\ldots,X_r)\in\Hom(\bZ^r,U(m))_{\mathcal{D}}$. Let $\lambda^1,\ldots,\lambda^s\in\bT^r$ be all its distinct $r$-tuples of eigenvalues. Note that each $\lambda^j\in\omega^{-1}(D)$ and $\dim E_{\lambda^j}=m_j=l_j\sigma(D)$ for some $l_j\in\bZ$. It is easy to see that \[\eta_{\mathcal{D}}^{-1}(X_1,\ldots,X_r)\cong \prod_{j=1}^s \bigslant{U(m_j)/U(\sigma(D))^{l_j}}{\Sigma_{l_j}},\]
which is $\bQ$-acyclic in general and is a point if $l=1$. Thus $\eta_{\mathcal{D}}$ satisfies the properties stated in the theorem. For $\phi_\mathcal{D}$, it is clear that there exist homeomorphisms
\[\res_{\mathcal{D}}^{-1}(X_1,\ldots,X_r)\cong\prod_{j=1}^sB_n(U(m_j))_D\]
\[(\res_{\mathcal{D}}\circ\phi_{\mathcal{D}})^{-1}(X_1,\ldots,X_r)\cong\prod_{j=1}^s\bigslant{U(m_j)\times_{U(\sigma(D))^{l_j}}B_D\,^{l_j}}{\Sigma_{l_j}}\]
such that the restriction of $\phi_{\mathcal{D}}$ on $(\res_{\mathcal{D}}\circ\phi_{\mathcal{D}})^{-1}(X_1,\ldots,X_r)$ can be regarded as the product of
\[\phi_D:\bigslant{U(m_j)\times_{U(\sigma(D))^{l_j}}B_D\,^{l_j}}{\Sigma_{l_j}}\to B_n(U(m_j))_D\]
under these homeomorphisms. It follows from Theorem \ref{thm:BnD} that $\phi_\mathcal{D}$ is a rational homology equivalence for any $l\geq 1$ and a homeomorphism for $l=1$.
\end{proof}

The space $\omega^{-1}(D)$ can be studied using linear systems over $\bR/\bZ$.

\begin{definition}\label{BC}
Suppose that $\Gamma$ is a central extension of the form (\ref{gammacentralext}) and $\omega_{ij}^l$ are the coefficients of its $k$-invariant as in (\ref{gammalcentralext}).
\begin{enumerate}
\item Let $\Omega$ be the $C^n_2\times r$ matrix with rows indexed by $(i,j),1\leq i<j\leq n$, columns indexed by $1\leq l\leq r$ and the $l$-th entry on the $(i,j)$-th row equals to $\omega_{ij}^l$.

\item Let $Q$ be a row echelon form of $\Omega$ over $\bZ$. Define $B$ to be the absolute value of the product of the pivot entries of $Q$. The number $B$ is independent of the choice of $Q$.

\item Let $R$ be the reduced row echelon form of $\Omega$ over $\bQ$. The columns of $R$ can be regarded as vectors in $\bQ^{C^n_2}$. Let $C(R)$ be the $\bZ$-submodule of $(\bQ/\bZ)^{C^n_2}$ generated by the images of the columns of $R$ under the quotient map $\bQ^{C^n_2}\to(\bQ/\bZ)^{C^n_2}$. Define $C$ to be the number of elements in $C(R)$.

\item Define $P(\Omega)=B/C.$
\end{enumerate}
\end{definition}

\begin{example}
Let $r=4,n=3$,
\[Q=
\begin{pmatrix}
4 &-1&1&-6\\
0 &3&1&2\\
0&0&0&0
\end{pmatrix}
\text{ and so }
R=
\begin{pmatrix}
1 &0&1/3&-4/3\\
0 &1&1/3&2/3\\
0&0&0&0
\end{pmatrix}.
\]
Then $B=(4)(3)=12$ and $C(R)$ is the $\bZ$-submodule of $(\bQ/\bZ)^4$ generated by
\[
\begin{bmatrix}
1\\0\\0
\end{bmatrix}
\equiv
\begin{bmatrix}
0\\1\\0
\end{bmatrix}
\equiv
\begin{bmatrix}
0\\0\\0
\end{bmatrix},
\begin{bmatrix}
1/3\\1/3\\0
\end{bmatrix}
\text{ and }
\begin{bmatrix}
-4/3\\2/3\\0
\end{bmatrix}
\equiv 2
\begin{bmatrix}
1/3\\1/3\\0
\end{bmatrix}.\]
Hence $C=3$ and $P(\Omega)=12/3=4$.
\end{example}

\begin{lemma}\label{lem:omegainvD}
Let $\omega:\bT^r\to\TnRZ$ be the map (\ref{omegafunction}) and $\Omega,P(\Omega)$ be as in Definition \ref{BC}. Then for any $D\in \TnQZ$,
$\omega^{-1}(D)$ is either empty or homeomorphic to a disjoint union of $P(\Omega)$ copies of $\bT^{\text{nul}(\Omega)}$, where$\text{ nul}(\Omega)$ is the nullity of $\Omega$.
\end{lemma}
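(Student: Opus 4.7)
The plan is to identify $\omega:\bT^r\to T(n,\bR/\bZ)$ with the homomorphism of compact abelian groups induced by the integer linear map $\Omega:\bZ^r\to\bZ^{C^n_2}$ (via the standard isomorphism $\bT\cong\bR/\bZ$ and by identifying a skew-symmetric matrix with its above-diagonal entries). Since $\omega$ is a group homomorphism, the fiber $\omega^{-1}(D)$ is either empty or a coset of $\ker\omega$, hence homeomorphic to $\ker\omega$ in the non-empty case. So the lemma reduces to showing that $\ker\omega$ is homeomorphic to $P(\Omega)$ disjoint copies of $\bT^{\text{nul}(\Omega)}$.

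To compute $\ker\omega$, I would apply the Smith normal form: write $U\Omega V=S$ with $U\in GL(C^n_2,\bZ)$, $V\in GL(r,\bZ)$, and $S$ diagonal with entries $d_1\mid d_2\mid\cdots\mid d_s$ (where $s=\text{rank}(\Omega)$) followed by zero rows. Since $U$ and $V$ act as self-homeomorphisms of the ambient tori, $\ker\omega$ is homeomorphic to the kernel of the map induced by $S$, which is easily seen to be $\prod_{i=1}^s(\bZ/d_i)\times\bT^{r-s}$; topologically this is a disjoint union of $d_1 d_2\cdots d_s$ copies of $\bT^{r-s}=\bT^{\text{nul}(\Omega)}$.

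The remaining step, which is also the main obstacle, is to verify $P(\Omega)=B/C=d_1\cdots d_s$. I would do this by interpreting $B$ and $C$ as lattice indices. Let $L_c:=\Omega(\bZ^r)\subset\bZ^{C^n_2}$ be the column lattice, $L_c^\natural:=\bQ L_c\cap\bZ^{C^n_2}$ its saturation (so $|L_c^\natural/L_c|=d_1\cdots d_s$ by standard Smith theory), $J=\{j_1<\cdots<j_s\}$ the pivot columns of $\Omega$ (common to $Q$ and $R$), $\Omega_J$ the corresponding $C^n_2\times s$ submatrix, and $L_{c,J}:=\Omega_J(\bZ^s)\subset L_c$. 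A direct unpacking of the defining relation $R=F\Omega$ ($F\in GL(C^n_2,\bQ)$) shows that the entries of the non-pivot columns of $R$ in the pivot rows are exactly the rational coefficients expressing the non-pivot columns of $\Omega$ in the basis of pivot columns; this identifies $C(R)\cong L_c/L_{c,J}$, so $C=|L_c/L_{c,J}|$. On the other hand, the nonzero rows of any $\bZ$-row-echelon form $Q=E\Omega$ form a basis of the row lattice $L_r:=\Omega^T(\bZ^{C^n_2})$ in echelon shape, whose $s\times s$ pivot-column block is upper triangular with diagonal entries the pivots; hence $B=[\bZ^s:\pi_J(L_r)]=[\bZ^s:\Omega_J^T(\bZ^{C^n_2})]$, which is the gcd of the $s\times s$ minors of $\Omega_J$, and this equals $|L_c^\natural/L_{c,J}|$ since $L_c^\natural$ is also the saturation of $L_{c,J}$ (same $\bQ$-span). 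Putting everything together,
\[ B=|L_c^\natural/L_{c,J}|=|L_c^\natural/L_c|\cdot|L_c/L_{c,J}|=(d_1\cdots d_s)\,C, \]
so $P(\Omega)=d_1\cdots d_s$ as required. The crux is the identification $C(R)\cong L_c/L_{c,J}$, which in particular shows (as a by-product) that $C$ is independent of the specific choice of $R$; once one commits to the lattice-theoretic viewpoint, the rest of step three is standard bookkeeping with Smith invariants.
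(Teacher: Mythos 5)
Your proof is correct, and it takes a genuinely different route from the paper's. The paper works with the linear system $\Omega\vec x=D'$ over $\bR/\bZ$ directly: it uses a $\bZ$-row-echelon form $Q$ to exhibit the solution set $S$ as a $B$-sheeted covering of the free-variable torus $\bT^{r-p}$, and then uses the reduced echelon form $R$ to describe the deck transformations (loops around free coordinates lifting to translations by the columns of $R$), so that the number of components drops from $B$ to $B/C=P(\Omega)$. You instead observe that $\omega$ is a homomorphism of compact abelian groups, so every non-empty fiber is a coset of $\ker\omega$; you compute $\ker\omega\cong\prod_{i=1}^s(\bZ/d_i)\times\bT^{r-s}$ via the Smith normal form of $\Omega$; and you then prove the purely arithmetic identity $P(\Omega)=B/C=d_1\cdots d_s$ by interpreting $B=|L_c^\natural/L_{c,J}|$ and $C=|L_c/L_{c,J}|$ as lattice indices. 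Both arguments are sound and of comparable length. What the paper's approach buys is directness (no Smith theory, just echelon forms and covering spaces). What yours buys is more structure: it makes the abelian group structure of each fiber explicit, it reduces the topology entirely to the elementary divisors of $\Omega$, and, as you note, it proves as a by-product that $B$ and $C$ (hence $P(\Omega)$) are well-defined, a fact the paper asserts for $B$ without proof.
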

\begin{proof}
Suppose $\omega^{-1}(D)$ is non-empty. A $r$-tuple $\lambda=(\lambda_1,\ldots,\lambda_r)\in\omega^{-1}(D)$ if and only if
\[\sum_{k=1}^r\omega_{ij}^k\frac{\log\lambda_k}{2\pi\sqrt{-1}}=d_{ij}\]
for all $1\leq i<j\leq n$. Let $D'\in(\bQ/\bZ)^{C^n_2}$, with entries indexed by $(i,j),1\leq i < j\leq n$, be obtained from $D$ by rewriting the entries in a column vector. Suppose $Q,R,B,C$ are as in Definition \ref{BC} and   $x_k=\frac{\log\lambda_k}{2\pi\sqrt{-1}}$. Then $\omega^{-1}(D)$ is homeomorphic to the solution space $S$ of the equivalent linear systems
$\Omega \vec{x} = D' \Longleftrightarrow Q \vec{x} = D''$
over $\bQ/\bZ$. Here $D''$ is obtained from $D'$ by performing the same elementary operations used to obtain $Q$ from $\Omega$. Without loss of generality, assume $x_1,\ldots,x_p$ are the basic variables and $x_{p+1},\ldots,x_r$ are the free variables in the system $Q \vec{x} = D''$. For $1\leq s\leq p$, let $b_s$ be the pivot entry in the $s$-th column of $Q$. Then for points in $S$, $b_1x_1,\ldots,b_px_p\in\bR/\bZ$ are uniquely determined by any $(x_{p+1},\ldots x_r)\in T:=(\bR/\bZ)^{r-p}\cong\bT^{r-p}$. Hence, $S$ is a $B$-sheeted covering space over $T$. Note that all entries below the $p$-th row of $R$ are zero. Let $\vec{v_s}\in\bR^p$ be the first $p$ entries of the $s$-th column of $R$ and $\vec{w_s}$ be its image under the quotient map $\bR^p\to(\bR/\bZ)^p$. Then a loop in $T$ parametrized by $x_s\in\bS^1$ with other coordinates fixed lifts to a path connecting $\vec{x}$ and $\vec{x}+\begin{bmatrix}\vec{w_s}\\ \vec{0}\end{bmatrix}$ in $S$. From this it can be deduced that $S$, and hence $\omega^{-1}(D)$, is homeomorphic to a disjoint union of $B/C=P(\Omega)$ copies of $\bT^{r-p}$, where $r-p=\text{ nul}(\Omega)$ is the nullity of $\Omega$. This proves the lemma.
\end{proof}

Combining Theorems \ref{thm:BnD}, \ref{lDQ} and Lemma \ref{lem:omegainvD}, we obtain the following result about the number of components, rational cohomology and the associated representation space for $\Hom(\Gamma,U(m))_{\mathcal{D}}$.

\begin{theorem}
Suppose that $\mathcal{D}=\sum_{j=1}^sl_jD_j\in \mathcal{F}_{n,m}$ with $l_j>0$ and $D_i\neq D_j$ for $i\neq j$. Let $l=\sum_{j=1}^sl_j$ and $m_j=l_j\sigma(D_j)$. Then
$\Hom(\Gamma,U(m))_{\mathcal{D}}$ is non-empty if and only if $\omega^{-1}(D_j)$ is non-empty for all $j=1,\ldots,s$. In that case, $\Hom(\Gamma,U(m))_{\mathcal{D}}$ has
\[
\prod_{j=1}^s {P(\Omega)+l_j-1 \choose l_j}
\]
components and there is a rational homology equivalence

\begin{equation*}%\label{HomDQ}
\bigslant {\left[\bigslant{\left(U(m)/\bT^l\right)\times (\coprod_{P(\Omega)}\bT^{n+\text{nul}(\Omega)})^l}{\bZ_{\mathcal{D}}}\right]}{\prod_{j=1}^s\Sigma_{l_j}}
\to\text{Hom}(\Gamma,U(m))_\mathcal{D},
\end{equation*}
where the action of the finite abelian group $\bZ_{\mathcal{D}}:= \prod_{j=1}^s(\bZ_{D_j})^{l_j}$ on the space
$\left(U(m)/\bT^l\right)\times (\coprod_{P(\Omega)}\bT^{n+\text{nul}(\Omega)})^l$ is trivial on rational cohomology. Moreover the map induces a homeomorphism
%\bigslant{\Hom(\Gamma,U(m))_{\mathcal{D}}}{U(m)}\cong \prod_{j=1}^s\bigslant{\left((\bS^1)^n\times \omega^{-1}(D_j)\right)^{l_j}}{\Sigma_{l_j}}\]
\begin{equation*}%\label{RepDhomeo}
\bigslant{\Hom(\Gamma,U(m))_{\mathcal{D}}}{U(m)}\cong \prod_{j=1}^s\left[\bigslant{\left(\coprod_{P(\Omega)}\bT^{n+\text{nul}(\Omega)}\right)^{l_j}}{\Sigma_{l_j}}\right]
\end{equation*}
after passing to quotients by the action of $U(m)$.
%Hence each component of the representation space $\Rep(\Gamma,U(m))$ is a product of symmetric products of disjoint unions of tori
%of the form $\bT^{n+\text{nul}(\Omega)}$.
\end{theorem}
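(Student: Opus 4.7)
The proof proceeds by stitching together three previously established structural results: the fiber--bundle decomposition of $\Hom(\Gamma,U(m))_{\mathcal{D}}$ over the flag manifold $U(m)/\prod_{j=1}^s U(m_j)$ proved in the preceding theorem, the rational homology equivalence of Theorem~\ref{lDQ} applied to each factor $\Hom(\Gamma,U(m_j))_{l_jD_j}$, and the explicit topological description of $\omega^{-1}(D_j)$ from Lemma~\ref{lem:omegainvD}.

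For the non--emptiness criterion, the fiber--bundle structure reduces the question to each factor $\Hom(\Gamma,U(m_j))_{l_jD_j}$, and Theorem~\ref{lDQ} shows this is non--empty iff $\omega^{-1}(D_j)$ is non--empty, since $B_{D_j}=B_n(U(\sigma(D_j)))_{D_j}$ is always non--empty. For the rational homology equivalence, I would substitute the approximation from Theorem~\ref{lDQ} into each factor of the fiber--bundle expression, collapse the iterated balanced products via the standard identity $G\times_H(H\times_K X)\cong G\times_K X$, and invoke the $l=1$ case of Theorem~\ref{thm:BnD} to get the homeomorphism $B_{D_j}\cong (U(\sigma(D_j))/\bT^1\times\bT^n)/\bZ_{D_j}$. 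After a final collapse of balanced products, this yields
\[
\bigslant{\bigl[(U(m)/\bT^l)\times\textstyle\prod_j(\bT^n\times\omega^{-1}(D_j))^{l_j}\bigr]}{\bZ_{\mathcal{D}}\rtimes\prod_j\Sigma_{l_j}},
\]
and Lemma~\ref{lem:omegainvD} identifies $\bT^n\times\omega^{-1}(D_j)$ with $\coprod_{P(\Omega)}\bT^{n+\text{nul}(\Omega)}$, giving the stated space. Triviality of the $\bZ_{\mathcal{D}}$--action on rational cohomology is inherited factor--by--factor from the corresponding assertion inside Theorem~\ref{thm:BnD}.

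The component count and the orbit space fall out together. Since $U(m)$ is connected it preserves path--components, so
$\#\pi_0(\Hom(\Gamma,U(m))_{\mathcal{D}})=\#\pi_0(\Hom(\Gamma,U(m))_{\mathcal{D}}/U(m))$.
The $U(m)$--quotient kills the $U(m)/\bT^l$ factor; the quotient $\bT^n/\bZ_{D_j}$ is again a torus $\bT^n$ because $\bZ_{D_j}$ acts by translation, while $\bZ_{D_j}$ acts trivially on $\omega^{-1}(D_j)$. Hence the orbit space is $\prod_j(\coprod_{P(\Omega)}\bT^{n+\text{nul}(\Omega)})^{l_j}/\Sigma_{l_j}$, and the number of components of each $\Sigma_{l_j}$--symmetric product is the number of $l_j$--multisets on $P(\Omega)$ letters, namely $\binom{P(\Omega)+l_j-1}{l_j}$; multiplying over $j$ gives the displayed count.

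The principal obstacle is bookkeeping: verifying that $U(m)$--equivariance, the various finite group actions, and the balanced product identifications compose coherently through the several substitutions, and that the $\bZ_{\mathcal{D}}$--action remains trivial on rational cohomology (which reduces, via the K\"unneth formula, to the corresponding assertion in Theorem~\ref{thm:BnD} applied to each factor separately). Aside from this, every piece is already in place in the earlier sections.
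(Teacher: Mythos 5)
Your proof is correct and follows exactly the route the paper intends — the paper itself only states "Combining Theorems \ref{thm:BnD}, \ref{lDQ} and Lemma \ref{lem:omegainvD}, we obtain the following result" and leaves the assembly to the reader, which is precisely what you carry out: substituting Theorem \ref{lDQ} into the fiber--bundle decomposition, collapsing balanced products, invoking the $l=1$ case of Theorem \ref{thm:BnD} to unfold $B_{D_j}$, identifying $\bT^n\times\omega^{-1}(D_j)$ via Lemma \ref{lem:omegainvD}, and counting multisets for the components. The one small detail worth making explicit is that the $\bZ_{D_j}$--action preserves each connected component of $\coprod_{P(\Omega)}\bT^{n+\mathrm{nul}(\Omega)}$ (since it translates only the $\bT^n$ factor and fixes $\omega^{-1}(D_j)$), which is what makes both the component count and the homotopic--to--identity argument go through.
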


Finally we note that the rational cohomology of the spaces $B_n(U(m))$ and $\Hom(\Gamma, U(m))$ can
be computed using standard spectral sequence arguments and invariant theory.
%For instance, if $D\in\TnQZ$ and $m=l\sigma(D)$. Then
%$$
%H^{\ast}(B_n(U(m))_D,\bQ)=\left(\bigslant{E[d_{2t-1},y_{ik}]\otimes P[x_k]}{S_i(x)}\right)^{\Sigma_l},
%$$
%where $E$ and $P$ is the exterior algebra and polynomial algebras with generators indexed by $1\leq i\leq n,1\leq k\leq l,l+1\leq t\leq m$. The degrees of the generators are $|d_{2t-1}|=2t-1,|x_{k}|=2$ and $|y_{ik}|=1$. The ideal $S_i(x)$ is generated by the symmetric polynomials in $x_1,\ldots,x_l$. The symmetric group $\Sigma_l$ acts on the generators of $\Lambda[d_{2t-1},x_k,y_{ik}]$ by permuting the subscripts $k$ of $x_k,y_{jk}$ and fixing $d_{2t-1}$.
Details are left to the
interested (and highly motivated) reader.

%\begin{proof}
%$$U(m)\times_{U(\sigma(D))\wr \Sigma_l}(B_D\times \omega^{-1}(D))^l$$
%
%$$\Hom(\Gamma,U(m))_{\mathcal{D}}\cong %U(m)\times_{\left(\prod_{j=1}^sU(m_j)\right)}\left(\prod_{j=1}^s\Hom(\Gamma,U(m_j))_{l_jD_j}\right)$$

%\end{proof}

%\begin{example}
%In the trivial case where the $k$-invariant is zero, we have $\Gamma\cong \bZ^{n+r}$, $B=C=P(\Omega)=1$ and %$\text{nul}(\Omega)=r$. Also, $\omega^{-1}(D)$ is empty if and only if $D$ is not the zero matrix $\bf{O}$. Therefore, %$\Hom(\Gamma,U(m))_{\mathcal{D}}$ is non-empty if and only if $\mathcal{D}=m\bf{O}$. Then (\ref{HomDQ}) and %(\ref{RepDhomeo}) reduce to the well-known results (cite)
%$$
%\bigslant{\left(U(m)/\bT^m\right)\times\bT^{n+r}}{\Sigma_m}\to \text{Hom}(\bZ^{n+r},U(m))
%$$
%and
%\begin{align*}
%\text{Hom}(\bZ^{n+r},U(m))/U(m)&= \Hom(\Gamma,U(m))/U(m)\\
%&= \Hom(\Gamma,U(m))_{m\bf{O}}/U(m)\\
%&\cong \bigslant{(\bT^{n+r})^m}{\Sigma_m}.
%\end{align*}

%\end{example}

\bibliographystyle{plain} % bibliography style

\end{document}